
\documentclass[11pt]{article}

\headheight=8pt     \topmargin=0pt \textheight=624pt
\textwidth=432pt \oddsidemargin=18pt \evensidemargin=18pt

\usepackage{amsfonts}
\usepackage{amsmath,amssymb}
\usepackage{amscd}
\usepackage{amsthm}
\usepackage{fancyhdr}
\usepackage{color}
\usepackage{hyperref} 

\theoremstyle{plain}
\newtheorem{lem}{Lemma}[section]
\newtheorem{thm}{Theorem}

\newtheorem{cly}[lem]{Corollary} 
\newtheorem{prop}[lem]{Proposition}

\theoremstyle{definition}
\newtheorem{df}[lem]{Definition} 

\newtheorem{nota}[lem]{Notation} 

\theoremstyle{remark}
\newtheorem{rem}[lem]{Remark}




\newcommand{\ZZ}{\ensuremath{\mathbb Z}}
\newcommand{\CC}{\ensuremath{\mathbb C}}
\newcommand{\RR}{\ensuremath{\mathbb R}}

\newcommand{\cN}{\mathcal{N}}
\newcommand{\cL}{\mathcal{L}}
\newcommand{\cD}{\mathcal{D}}
\newcommand{\cF}{\mathcal{F}}

\newcommand{\cM}{\mathcal{M}}

\newcommand{\cS}{\mathcal{S}}

\newcommand{\cY}{\mathcal{Y}}
\newcommand{\ga}{\mathfrak{a}}
\newcommand{\g}{\mathfrak{g}}

\newcommand{\pd}[1]{\frac{\partial}{\partial #1}} 

\newcommand{\oo}{[\![}
\newcommand{\cc}{]\!]}
\newcommand{\op}{\textbf{\{}}
\newcommand{\cp}{\textbf{\}}}
\newcommand{\li}{\ensuremath{L_{\infty}}}

\begin{document}
\title{Simultaneous deformations and Poisson geometry
\thanks{2010 Mathematics Subject Classification:  primary
17B70,  	
53D17,
58H15;  	
secondary	53D18,  	
	58A50.  	
	Keywords: $\li$-algebra, deformation, Maurer-Cartan equation, Poisson manifold, Dirac manifold, generalized complex manifold.} 
}

\author{Ya\"el Fr\'egier\footnote{{  UArtois, LML, F-62\kern 1mm 300, Lens, France.}}\;\footnote{{MIT, 77 Mass. Avenue, Cambridge, MA 02139, USA.}}
\footnote{Universit\"at Z\"urich, Winterthurerstr. 190, CH-8057 Z\"urich, Switzerland. \texttt{yael.fregier@gmail.com}}
\and 
Marco Zambon\footnote{
{ Universidad Aut\'onoma de Madrid }
and
{  ICMAT (CSIC-UAM-UC3M-UCM),}
{ Campus de Cantoblanco,}
{ 28049 Madrid, Spain. } 
\texttt{marco.zambon@uam.es}, \texttt{marco.zambon@icmat.es}} 
\footnote
{Current address: KU Leuven, Department of Mathematics, Celestijnenlaan 200B box 2400, BE-3001 Leuven, Belgium.
{\texttt{marco.zambon@wis.kuleuven.be}}
 }}

\date{}
\maketitle
\begin{abstract}
We consider the problem of deforming simultaneously a \emph{pair} of given structures{. W}e show that {such} deformations are governed by an $\li$-algebra, which we construct explicitly. Our machinery is based on Th. Voronov's derived bracket construction.

In this paper we {consider}  only geometric {applications}, including deformations of coisotropic submanifolds in Poisson manifolds,  of twisted Poisson structures, and of complex structures within generalized complex geometry. These applications can not be, to our knowledge, obtained by other methods such as operad theory. 
\end{abstract}

\tableofcontents

\section*{Introduction}
\addcontentsline{toc}{section}{Introduction}
 
Deformation theory was developed in the 50's by  Kodaira-Kuranishi-Spencer for complex structures \cite{KoSp1}\cite{KoSp2}\cite{KoSp3}\cite{Ku} and by Gerstenhaber  for associative algebras  \cite{Ger}.  Nijenhuis-Richardson then gave an interpretation of deformations in terms of graded Lie algebras (\cite{NijRic1} and \cite{NijRic2}) which was later promoted by Deligne: {\it deformations of a given algebraic or geometric structure $\Delta$     are governed by a differential graded Lie algebra (DGLA) or, more generally, by an $\li$-algebra}.

For example, given a  vector space $V$, Gerstenhaber in \cite{Ger} introduced a graded Lie algebra $(L,[-,-])$ such that an associative algebra structure on $V$  is given by $\Delta\in L_1$ such that $[\Delta,\Delta]=0$.  
A \emph{deformation} of $\Delta$ is an element {$\Delta+  {\tilde{\Delta}}$ such that ${\tilde{\Delta}}\in L_1$ and} \begin{equation}\label{eq:easy}
0=[\Delta+  {\tilde{\Delta}},\Delta+ {\tilde{\Delta}}]= 2[\Delta,  {\tilde{\Delta}}] +[ {\tilde{\Delta}},  {\tilde{\Delta}}] 
=2( {d_\Delta  {\tilde{\Delta}} +\frac{1}{2}[ {\tilde{\Delta}}, {\tilde{\Delta}}]}  ).
\end{equation}
Therefore  the DGLA $(L,d_\Delta, [\cdot,\cdot])$ governs
deformations of the associative algebra $(V,\Delta)$.

 It is usually a hard task to show that the deformations of a given structure are governed by an $\li$-algebra, and even harder to construct  explicitly the $\li$-algebra. When one succeeds in doing so, as a reward one gets the cohomology theory, analogues of Massey products and a natural equivalence relation on the space of deformations. Moreover, quasi-isomorphic $\li$-algebras govern equivalent deformation problems, a result with non-trivial applications to quantization (see \cite{Kont}).\\

In this work we consider \emph{simultaneous} deformations of \emph{two} (interrelated) structures.
A typical example is given by the simultaneous deformations of   $(\Delta,\Phi)$, where $\Delta$ denotes a {pair} of associative algebras and $\Phi$ is an algebra morphism between them. These deformations are characterized by a cubic equation (unlike eq. \eqref{eq:easy} which is quadratic) and are therefore governed by an $\li$-algebra with non trivial $l_3$-term.  

Our main result, Thm. \ref{machine} in \S \ref{heart}, constructs explicitly $\li$-algebras governing such simultaneous deformation problems.\\

\noindent\textbf{Outline of   the paper.}   
\emph{$\li$-algebras}, introduced by Lada and Stasheff \cite{LadaStash}, consist of collections $\{l_i\}_{i\ge 1}$ of ``multi-brackets" satisfying higher Jacobi identities. They can be built out of what we call \emph{V-data} $(L, P, \mathfrak{a}, \Delta)$ via  \emph{derived bracket constructions} due to Th. Voronov   \cite{Vor}   \cite{vor2}, which extend those of Kosmann-Schwarzbach \cite{yvette}  (see  Thm. \ref{voronovderived} and  \ref{voronov}).
Our main contribution {is} to determine $\li$-algebras governing {{\it simultaneous}} deformation problems (Thm. \ref{machine}), by
recognizing that they arise as  in Voronov's Thm. \ref{voronov}.
These results are collected in \S \ref{sec:main}.\\

In the companion paper \cite{FZalg} we find algebraic applications to the study of simultaneous deformations of algebras and morphisms in the following categories: Lie, $L_\infty$,  Lie bi- and associative algebras, and more generally in any category of algebras over Koszul operads.   These results can alternatively be obtained by operadic methods, see for example \cite{FMY} and \cite{MV}, but our techniques have the advantage of not assuming any knowledge of the operadic machinery and of easily delivering  explicit formulae. Recently, using our techniques, Ji studied simultaneous  deformations in the category of Lie algebroids \cite{Ji}.\\

The main novelty {concerning  applications} -- and the focus of this paper -- is in {geometry.} In \S \ref{Poisson}
we determine $L_\infty$-algebras governing {simultaneous} deformations of:
\begin{itemize}
\item coisotropic submanifolds of Poisson manifold{s},   \item Dirac structures in Courant algebroids (with twisted Poisson structures as a special case), \item generalized complex structures in Courant algebroids (with complex structures as a special case).
\end{itemize}  
We also describe explicitly the equivalence relation on the space of twisted Poisson structures. 

None of these examples,  to our knowledge, falls under the scope of the operadic methods, and one should have in mind that in this geometric setting, no tool such as Koszul duality gives for free the graded Lie algebra $L$ we need as part of the V-data.\\

\noindent\textbf{Outlook: deformation quantization of symmetries.} It is known from \cite{BFFLS}  that the quantization of a mechanical system (Poisson manifold) can be understood as a deformation of the algebra of smooth functions ``in the direction" of the Poisson structure, the first order term of the Taylor expansion of this deformation. 

 One can associate to any Poisson structure such a quantization \cite{Kont}:  Poisson structures and their quantizations are Maurer-Cartan elements for suitable $L_\infty$-algebras (Schouten and Gerstenhaber algebras, respectively), so it suffices to build a $L_\infty$-morphism between these two $L_\infty$-algebras  (Formality Theorem). This morphism sends Maurer-Cartan elements to Maurer-Cartan elements, i.e. associates a quantization to any Poisson structure.\\

Our long term goal is to apply this approach to symmetries.
The notion of symmetry of a mechanical system  {$(C^\infty(M),\{-,-\})$} can be understood  as a {Lie algebra map $(\mathfrak{g}, [-,-])\rightarrow (C^\infty(M),\{-,-\})$. This map can be extended,  {in the category of   Poisson algebras, to $(S\mathfrak{g}, \{-,-\})$, the Poisson algebra of polynomial functions on $\mathfrak{g}^*$. Its} graph is a coisotropic submanifold of the Poisson manifold $\mathfrak{g}^*\times M$.}    Therefore, our first step towards this long term goal is to  construct in \S  \ref{cois} an $L_\infty$-algebra governing simultaneous deformations of Poisson tensors and their coisotropic submanifolds.  This
$L_\infty$-algebra plays the role of  {the} Schouten algebra in presence of symmetries. It extends
the $L_\infty$-algebras governing deformations of  coisotropic submanifolds of Poisson manifolds considered by Oh and Park \cite{OP}, and Cattaneo and Felder \cite{CaFeCo2}, since in their setting{s}, the Poisson structure was kept fixed.  \\

\noindent\textbf{Acknowledgements:}  {We thank J. Stasheff for comments, and D. Iacono, M. Manetti, F. Sch\"atz, B. Shoiket, {B. Vallette}, T. Willwacher for useful conversations.} {Further we thank the referee for valuable suggestions that helped improve the paper.}

\noindent M.Z.   thanks Uni.lu for hospitality (Luxembourg, 08/2010, grant FNR/10/AM2c/18). He was partially supported by CMUP (Porto), financed by FCT (programs POCTI, POSI and Ciencia 2007); grants  PTDC/MAT/098770/2008 and 
PTDC/MAT/099880/2008 (Portugal), MICINN RYC-2009-04065, MTM2009-08166-E  and ICMAT Severo Ochoa project SEV-2011-0087 (Spain).
 
\noindent Most of the work of Y.F. on this article was done while assistant of Prof. Dr. Martin Schlichenmaier at Uni.lu  (grant R1F105L15), to whom he would like to address his warmest thanks. He benefited from the support of UAM {through grant MTM2008-02686} (Madrid, 06/2010), and from the MPIM in Bonn (12/2011-01/2012).

 
 
\section{$L_{\infty}$-algebras  via derived brackets and   Maurer-Cartan elements}\label{sec:main}

{The purpose of this section is to establish Thm. \ref{machine}, which produces the $L_\infty$-algebras appearing in the rest of the article. Therefore, we first review some basic material about $L_\infty$-algebras in \S \ref{homolie}, then we recall in \S \ref{vorpaper} Voronov's constructions, which will be   used to establish   Theorem \ref{machine} in \S \ref{heart}.} Our proof is a direct computation, but we also provide a conceptual argument in terms of tangent cohomology, building on  \S \ref{Tang}. {We conclude justifying in \S\ref{sec:conv} why no convergence issues arise in our machinery, and discussing equivalences in \S\ref{sec:sym}.}

\subsection{Background on $L_{\infty}$-algebras}\label{homolie}
 
 We start defining (differential) graded Lie algebras, which are special cases of $\li$-algebras.

\begin{df} A \emph{graded Lie algebra} is a $\ZZ$-graded vector space $L=\bigoplus_{n\in\ZZ}L_n$ equipped with a degree-preserving bilinear bracket $[
\cdot,\cdot] \colon L\otimes L \longrightarrow L$ which satisfies 
\begin{itemize}
\item[1)] graded antisymmetry: $[a,b]=-(-1)^{\vert a\vert\vert b\vert}[b,a]$,
\item[2)] graded Leibniz rule: $[a,[b,c]]=[[a,b],c] + (-1)^{\vert a\vert\vert b\vert}[b,[a,c]].$
\end{itemize}
Here $a, b, c$ are homogeneous elements of $L$ and the degree $\vert x\vert$ of an homogeneous element $x\in L_n$ is by definition $n$. 
\end{df}

 \begin{df} A \emph{differential graded Lie algebra} (DGLA for short) is {a} graded Lie algebra $(L,[\cdot,\cdot])$ equipped with a homological derivation $d \colon L\to L$ of degree 1. In other words:
\begin{itemize}
\item[1)] $\vert da\vert=\vert a\vert +1$ ($d$ of degree 1),  
\item[2)] $d[a,b]=[da,b]+ (-1)^{ \vert a\vert}[a,db]$ (derivation),
\item[3)] $d^2=0$ (homological).
\end{itemize}
\end{df}

In order to formulate the definition of  $L_\infty$-algebra {-- a notion due to Lada and Stasheff \cite{LadaStash}} -- let us give two notations. Given two elements $v_1, v_2$ in a graded vector space $V$, let us define the \emph{Koszul sign}   of the transposition $\tau_{1,2}$ of these two elements by $$\epsilon(\tau_{1,2},v_1,v_2):=(-1)^{\vert v_1\vert \vert v_2\vert}.$$ We then extend multiplicatively this definition to an arbitrary permutation using a decomposition into transpositions. We will often abuse the notation $\epsilon(\sigma,v_1, \dots, v_n)$ by writing $\epsilon(\sigma)$,  and we define $\chi(\sigma):=\epsilon(\sigma)(-1)^{\sigma}$. 

  We will also need \emph{unshuffles}: $\sigma\in S_n$ is called an $(i, n-i)$-unshuffle if it satisfies $\sigma(1)<\dots<\sigma(i)$ and $\sigma(i+1)<\dots<\sigma(n).$ The set of $(i, n-i)$-unshuffles is denoted by $S_{(i, n-i)}.$ Following \cite[Def. 2.1]{LadaMarkl},
  we define
 
\begin{df}\label{li}
An \emph{$L_\infty$-algebra} is a $\ZZ$-graded vector space $V$ equipped with a collection ($k\ge1$) of linear maps $l_k \colon \otimes^kV\longrightarrow V$  of degree $2-k$  satisfying, for every collection of homogeneous elements $v_1, \dots, v_n \in V$:\begin{itemize}
\item[1)] graded antisymmetry: for every $\sigma \in S_n$
$$l_n(v_{\sigma(1)}, \dots, v_{\sigma(n)})=\chi(\sigma)l_n(v_1,\dots,v_n),$$
\item[2)] relations: for all $n\ge 1$
$$\sum_{\substack{i+j=n+1\\i,j\ge 1}}(-1)^{i(j-1)}\sum_{\sigma\in S_{(i,n-i)}}\chi(\sigma)l_j(l_i(v_{\sigma(1)}, \dots,v_{\sigma(i)}),v_{\sigma(i+1)}, \dots, v_{\sigma(n)} )=0.$$
\end{itemize}
In a \emph{curved $L_\infty$-algebra} one additionally allows for an element $l_0\in V_2$, one allows $i$ and $j$ to be zero in the relations   2), and one adds the relation corresponding to  $n=0$. 
\end{df}

Notice that when all $l_k$ vanish except for $k=2$, we obtain graded Lie algebras.

In Def. \ref{li}  the multibrackets are graded antisymmetric and  $l_k$ has degree $2-k$, whereas in the next definition they are graded symmetric and all of degree $1$.

\begin{df}\label{li1}
An \emph{$L_\infty[1]$-algebra} is a graded vector space $W$ equipped with a collection  ($k\ge1$) of linear maps $m_k \colon \otimes^kW\longrightarrow W$ of degree $1$ satisfying, for every collection of homogeneous elements $v_1, \dots, v_n \in W$:\begin{itemize}
\item[1)] graded  symmetry: for every $\sigma \in S_n$
$$m_n(v_{\sigma(1)}, \dots, v_{\sigma(n)})=\epsilon(\sigma)m_n(v_1,\dots,v_n),$$
\item[2)] relations:  for all $n\ge 1$
$$
\sum_{\substack{i+j=n+1\\i,j\ge 1}}
 \sum_{\sigma\in S_{(i,n-i)}}\epsilon (\sigma)m_j(m_i(v_{\sigma(1)}, \dots,v_{\sigma(i)}),v_{\sigma(i+1)}, \dots, v_{\sigma(n)} )=0.$$
\end{itemize}
In a \emph{curved $L_\infty[1]$-algebra} one additionally allows for an element $m_0\in W_1$ (which can be understood as a   bracket with zero arguments),  one allows $i$ and $j$ to be zero in the relations   2), and one adds the relation corresponding to  $n=0$.   
\end{df}

\begin{rem} 
\label{desuspend}
There is a bijection between \li-algebra structures on a graded vector space $V$ and $\li[1]$-algebra structures on $V[1]$, the graded vector space defined by $(V[1])_i:=V_{i+1}$ \cite[Rem. 2.1]{Vor}. The multibrackets are related by applying the d\'ecalage isomorphisms

\begin{equation}\label{deca}
 (\otimes^n V)[n] \cong \otimes^n(V[1]),\;\; v_1\dots v_n \mapsto v_1\dots v_n\cdot (-1)^{(n-1)|v_{1}|+\dots+2|v_{n-2}|+|v_{n-1}|},
 \end{equation}
 where $|v_i|$ denotes the degree of $v_i\in V$. The bijection extends to the  curved case.
 \end{rem}
 
 From now on, for any $v\in V$, we denote by $v[1]$ the corresponding element in $V[1]$ (which has  degree $|v|-1$). Also, we denote the multibrackets in $\li[1]$-algebras by $\{\cdots\}$, we denote by $d:=m_1$ the unary bracket, and in the curved case we denote $\{\emptyset\}:=m_0$ (the   bracket with zero arguments).

\begin{df} Given an $L_\infty[1]$-algebra $W$, a \emph{Maurer-Cartan element} is a degree $0$ element $\alpha$ satisfying the 
 Maurer-Cartan equation
   \begin{equation}\label{MaCa}
 \sum_{n=1}^{\infty} \frac{1}{n!}\{\underbrace{\alpha,\dots,\alpha}_{n \text{ times}}\}=0.
\end{equation}
One denotes by $MC(W)$ the set of its Maurer-Cartan elements. 

If $W$ is a {curved} $L_\infty[1]$-algebra, one defines Maurer-Cartan elements by adding $m_0\in W_1$ to the left hand side of eq. \eqref{MaCa} (i.e. by letting the sum in \eqref{MaCa} start at $n=0$).
\end{df}
 
There is an issue with the above definition: the l.h.s. of eq. \eqref{MaCa} is generally an infinite sum.   In this paper we solve this issue by considering \emph{filtered} $L_\infty[1]$-algebras (see Def. \ref{def:filteredli}), for which the above infinite sum   automatically converges.



\subsection{Th. Voronov's constructions of $L_\infty$-algebras as derived brackets}\label{vorpaper}
In this subsection we introduce V-data and recall how Voronov associates $L_\infty[1]$-algebras to a V-data.
 
\begin{df}\label{vdata}
 A \emph{V-data} consists of a quadruple $(L,\ga, P,\Delta)$ where 
 \begin{itemize}
\item  $L$ is a graded Lie algebra (we denote its bracket by $[\cdot,\cdot]$), 
\item $\ga$ an abelian  Lie subalgebra,  
\item $P \colon L \to \ga$ a projection whose kernel is a Lie subalgebra of $L$, 
 \item $\Delta \in Ker(P)_1$ an element such that $[\Delta,\Delta]=0$.
\end{itemize}
When  $\Delta$ is an arbitrary element of $L_1$ instead of $Ker(P)_1$, we refer to  $(L,\ga, P,\Delta)$
as a \emph{curved V-data}. 
\end{df}

\begin{thm}[{\cite[Thm. 1, Cor. 1]{Vor}}]\label{voronovderived}
 Let $(L,\ga, P, \Delta)$ be a curved V-data.
Then $\mathfrak{a}$ is a curved $L_\infty[1]$-algebra for the multibrackets   $\{\emptyset\}:=P\Delta$ and 
($n\ge 1$)
\begin{align}\label{eq:adp}
\{ a_1,\dots,a_n\}&=P[\dots[[\Delta,a_1],a_2],\dots,a_n].
\end{align} 
We obtain a  $L_\infty[1]$-algebra exactly when $\Delta \in Ker(P)$ .
\end{thm}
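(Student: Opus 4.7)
The proof hinges on three structural facts. First, $[\Delta,\Delta]=0$ together with graded Jacobi makes $\delta:=[\Delta,\cdot]$ a square-zero derivation of degree $1$, so $(L,[\cdot,\cdot],\delta)$ is a DGLA. Second, the abelianness of $\ga$ forces $[a_i,a_j]=0$ for all $i,j$. Third, $\ker P$ is a Lie subalgebra of $L$, and in the uncurved case $\Delta\in\ker P$ additionally gives $\delta(\ker P)\subseteq\ker P$. These are the structural tools that drive the argument.

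I would first establish graded symmetry of the multibrackets. By induction on adjacent transpositions it suffices to verify
\[
[[X,a_i],a_{i+1}]=(-1)^{|a_i||a_{i+1}|}[[X,a_{i+1}],a_i]
\]
for arbitrary $X\in L$ and $a_i,a_{i+1}\in\ga$, which is immediate from graded Jacobi applied to $[X,[a_i,a_{i+1}]]$ together with $[a_i,a_{i+1}]=0$.

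The main computation is the higher Jacobi identities. Substituting \eqref{eq:adp} into the $L_\infty[1]$-relation of Def.~\ref{li1} produces, for each $(i,j)$ with $i+j=n+1$ and each $(i,n-i)$-unshuffle $\sigma$, a term of the form
\[
P\bigl[\cdots[[\Delta,PX^{(i)}_\sigma],a_{\sigma(i+1)}],\cdots,a_{\sigma(n)}\bigr],\qquad X^{(i)}_\sigma:=[\cdots[[\Delta,a_{\sigma(1)}],a_{\sigma(2)}],\cdots,a_{\sigma(i)}].
\]
Expanding the inner projection as $P=\mathrm{id}-(\mathrm{id}-P)$ splits each term into a ``clean'' piece (where the inner $P$ is dropped) and a ``correction'' piece that inserts a $\ker P$-element in place of $PX^{(i)}_\sigma$. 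Using graded Jacobi, $[\Delta,\Delta]=0$, and $[a_k,a_\ell]=0$, both pieces rewrite as sums of $P$-applied brackets involving $\delta_k^+:=P[\Delta,a_k]\in\ga$ and $\delta_k^-:=(\mathrm{id}-P)[\Delta,a_k]\in\ker P$; subterms built entirely from $\delta_k^-$'s vanish because $\ker P$ is a Lie subalgebra. The remaining mixed terms coming from the clean pieces are exactly cancelled by the corresponding terms from the correction pieces, after summation over all $(i,j)$-splittings and unshuffles, with combinatorial coefficients matched by the graded symmetry of Step~2. The case $n=2$ is already representative of the general pattern.

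The curved case requires only formal additions: admitting $m_0:=P\Delta$ and permitting $i=0$ or $j=0$ in the sums furnishes the boundary terms needed when $\Delta\notin\ker P$; in particular, the $n=0$ relation $m_1(m_0)=P[\Delta,P\Delta]=0$ follows from the same removal-of-inner-$P$ argument together with $[\Delta,\Delta]=0$. Specialization to $\Delta\in\ker P$ then recovers the uncurved $L_\infty[1]$-structure. The main obstacle I anticipate is the combinatorial and Koszul-sign bookkeeping in Step~3; a conceptually cleaner organization, implicit in Voronov's proof, is to encode the derived brackets as a single coderivation $D$ on the cofree cocommutative coalgebra $S^c(\ga)$ and deduce all $L_\infty[1]$-identities from one equation $D^2=0$ lifted from $\delta^2=0$.
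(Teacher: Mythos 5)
First, a point of reference: the paper does not prove Theorem \ref{voronovderived} at all --- it is quoted from Voronov's work \cite{Vor} with a citation, so there is no in-paper argument to compare yours against line by line. Judged on its own terms, your proposal correctly isolates the three structural inputs (abelianness of $\ga$, $\ker P$ a Lie subalgebra, $[\Delta,\Delta]=0$), and your Step 2 (graded symmetry via the Jacobi identity and $[a_i,a_{i+1}]=0$) is complete and correct. The $n=0$ relation in the curved case, $P[\Delta,P\Delta]=0$, is also true, though it deserves one explicit line: writing $\Delta=P\Delta+\Delta_0$ with $\Delta_0\in\ker P$, abelianness gives $[P\Delta,P\Delta]=0$, so $0=[\Delta,\Delta]$ forces $2[P\Delta,\Delta_0]=-[\Delta_0,\Delta_0]\in\ker P$, whence $P[\Delta,P\Delta]=-P[P\Delta,\Delta_0]=0$.

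The genuine gap is Step 3, which is the entire content of the theorem. The cancellation between your ``clean'' and ``correction'' pieces is asserted, not demonstrated: after expanding $P=\mathrm{id}-(\mathrm{id}-P)$ in the inner slot, the resulting expressions are nested brackets in which the outer arguments $a_{\sigma(i+1)},\dots,a_{\sigma(n)}$ (elements of $\ga$, not of $\ker P$) are interleaved with $\Delta$ and with $(\mathrm{id}-P)X^{(i)}_\sigma$, so they are not of the form ``brackets built from $\delta_k^{\pm}$'' and the vanishing-in-$\ker P$ argument does not apply to them as stated. Moreover, the claim that ``the case $n=2$ is already representative'' is not tenable: the $n$-th relation mixes all splittings $i+j=n+1$ and all $(i,n-i)$-unshuffles, and the cancellation pattern across these is precisely what Voronov establishes by induction, via the key identity that the $n$-th Jacobiator of the derived brackets of $\Delta\in\ker P$ equals the $n$-th derived bracket of $\tfrac{1}{2}[\Delta,\Delta]$. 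Proving that identity --- or, equivalently, carrying out your coderivation reformulation and actually showing that the induced coderivation on $S^c(\ga)$ squares to zero, which again requires $\ker P$ to be a subalgebra and $\ga$ abelian and is not an automatic lift of $\delta^2=0$ --- is the missing step; without it the argument is a plausible outline rather than a proof.
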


When $\Delta \in Ker(P)$  there is actually a larger  $L_\infty[1]$-algebra, which contains $\ga$ as in Thm. \ref{voronovderived} as a $L_\infty[1]$-subalgebra.

\begin{thm}[{\cite[Thm. 2]{vor2}}]\label{voronov}
Let $V:=(L,\ga, P, \Delta)$ be a V-data, and denote $D:=[\Delta,\cdot] \colon L \to L$.
Then the space $L[1]\oplus \mathfrak{a}$ is a $L_\infty[1]$-algebra for the differential
\begin{equation}\label{diffV2}
d( x[1], a):= (- (Dx)[1], P(x+Da)),
\end{equation}
the binary bracket
\begin{equation}\{x[1],y[1]\}=[x,y][1](-1)^{|x|}\in L[1], \label{crochet}
\end{equation}
and for $n\ge 1$:
\begin{align}\label{vorder}
\{ x[1],a_1,\dots,a_n\}&=P[\dots[x,a_1],\dots,a_n]\in \ga,\\
\label{vorderlong}\{a_1,\dots,a_n\}&=P[\dots[Da_1,a_2],\dots,a_n]\in \ga.
\end{align}
Here $x,y\in L$ and $a_1,\dots,a_n \in \mathfrak{a}$. Up to permutation of the entries, all the remaining multibrackets vanish.
\end{thm}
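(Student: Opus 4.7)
The plan is to verify directly that the proposed operations satisfy the graded symmetry and the higher Jacobi identities of Definition \ref{li1}, reducing as much as possible to two facts that are already available: that $(L,D,[\cdot,\cdot])$ is a DGLA (since $D=[\Delta,\cdot]$ and $[\Delta,\Delta]=0$), and that the multibrackets on $\ga$ alone are those produced by Theorem \ref{voronovderived}.

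First I would handle the pure pieces. Restricted to $L[1]$, the operations reduce to the décalage-shifted bracket \eqref{crochet} together with the differential $-D[1]$, which together describe the DGLA $(L,D,[\cdot,\cdot])$ shifted to an $L_\infty[1]$-algebra via Remark \ref{desuspend}; this explains the signs in \eqref{crochet} and the first component of \eqref{diffV2}. Restricted to $\ga$, using the identity $Da=[\Delta,a]$, the brackets \eqref{vorderlong} agree with the ones of Theorem \ref{voronovderived}, and the corresponding Jacobi identities then follow from that theorem. Likewise the $\ga$-component of the differential in \eqref{diffV2} matches the $n=1$ case of \eqref{vorderlong}.

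The substantive content is the verification of the mixed Jacobi identities, those involving at least one $L[1]$-entry together with a number of $\ga$-entries. I would expand each side via the derived-bracket formulas \eqref{vorder} and \eqref{vorderlong} and then rearrange nested brackets by iterated application of the graded Jacobi identity in $L$. Three structural ingredients do the work: (a) $[\Delta,\Delta]=0$, which is the source of every cancellation involving a "double $D$"; (b) $\ga$ being abelian, which kills any term whose innermost bracket is between two $\ga$-entries and collapses the sum over unshuffles in Definition \ref{li1} down to the shapes appearing in \eqref{vorder}--\eqref{vorderlong}; and (c) $Ker(P)$ being a graded Lie subalgebra, which is precisely what annihilates the "wrong shape" terms (for instance those containing an unprojected inner bracket of two $Ker(P)$-elements) after application of the outer $P$, and hence accounts for the vanishing of all multibrackets with two or more $L[1]$-entries apart from \eqref{crochet}.

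The main obstacle will be sign bookkeeping: every desuspension $L\to L[1]$ produces a Koszul sign via \eqref{deca}, and these must be reconciled with the alternating signs $(-1)^{i(j-1)}$ and $\chi(\sigma)$ in the $L_\infty$-Jacobi identity. The cleanest strategy is to perform all computations on the $L$-side, where only the graded Jacobi identity is needed, and to convert to $L[1]$-signs once at the end using Remark \ref{desuspend}, rather than carrying $[1]$-shifts through every step. As a conceptual alternative -- announced in the introduction to \S \ref{sec:main} and developed in \S \ref{Tang} -- one may identify $L[1]\oplus\ga$ with the tangent $L_\infty[1]$-algebra to Voronov's $L_\infty[1]$-structure on $\ga$ at the Maurer-Cartan element encoded by $\Delta$; from that viewpoint the mixed identities are automatic, and the theorem becomes a tangent-cohomology statement built on Theorem \ref{voronovderived}.
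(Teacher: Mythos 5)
The paper does not prove Theorem \ref{voronov} at all: it is quoted directly from Voronov's \cite[Thm.~2]{vor2}, so there is no in-text argument to compare yours against. Judged on its own, your main route (direct verification of the higher Jacobi identities of Def.~\ref{li1}) is the standard one, and you correctly isolate the three hypotheses that drive it: $[\Delta,\Delta]=0$, abelianity of $\ga$, and $Ker(P)$ being a Lie subalgebra. But what you have written is a plan rather than a proof: the entire content of the theorem is the computation you defer --- organizing the unshuffle sum, inserting $1=P+(1-P)$ inside the nested brackets to see which terms are killed by the outer $P$, and reconciling the d\'ecalage signs of \eqref{deca} with the $L_\infty[1]$ relations --- and none of it is carried out. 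Two smaller inaccuracies: $L[1]$ is not closed under $d$, since $d(x[1])$ has the component $Px\in\ga$, so the ``pure $L[1]$ piece'' is really the quotient of $L[1]\oplus\ga$ by the ideal $\ga$ rather than a restriction; and the vanishing of multibrackets with two or more $L[1]$-entries (beyond \eqref{crochet}) is part of the \emph{definition} of the structure, not something that $Ker(P)$ ``accounts for'' --- what must be checked is that the Jacobi identities are consistent with that prescribed vanishing.

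The ``conceptual alternative'' you announce at the end does not work as stated. Twisting by a Maurer-Cartan element never changes the underlying graded vector space, so $L[1]\oplus\ga$ cannot be obtained as the tangent complex of the $\li[1]$-structure on $\ga$ from Theorem \ref{voronovderived}. What \S\ref{Tang} actually provides (Fiorenza's remark after Lemma \ref{truc}) is the identity $((L[1]\oplus \ga)_{0}^P)_{(\Delta[1],0)}=(L[1]\oplus \ga)_{\Delta}^P$, which reduces general $\Delta$ to the case $\Delta=0$; but the $\Delta=0$ case is itself an instance of Theorem \ref{voronov} --- it still carries the mixed brackets \eqref{vorder} and all the corresponding Jacobi identities --- and is not covered by Theorem \ref{voronovderived}. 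As a proof of the theorem this route is therefore circular, and the mixed identities are not ``automatic'' from that viewpoint. Note also that the paper uses Lemma \ref{truc} in the opposite direction: it \emph{assumes} Theorem \ref{voronov} and deduces the twisting statement, not the other way around.
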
 

\begin{nota}
We will denote by $$\ga_{\Delta}^P$$ and by $$(L[1]\oplus \mathfrak{a})_{\Delta}^P, \ or \ sometimes \ \g(V)$$ the $L_{\infty}[1]$-algebras produced by Thm. \ref{voronovderived} and \ref{voronov}.  \end{nota}

Given a curved $V$-data, assume that $\Phi\in \ga_0$ is such that  $e^{[\cdot,\Phi]}$ is  well-defined (see Prop. \ref{prop:fil} for a sufficient condition), giving an automorphisms of $(L,[\cdot,\cdot])$. We will consider
\begin{equation}\label{pphi}
P_{\Phi} :=P \circ e^{[\cdot,\Phi]}\colon L \to \ga.
\end{equation}
Notice that  $P_\Phi$ is a projection since $e^{[\cdot,\Phi]}|_{\ga}=Id_{\ga}$ by the abelianity of $\ga$.

\begin{rem}\label{MCexp}  Let $(L,\ga, P, \Delta)$ be a curved V-data and
  $\Phi\in \ga_0$ as above. Then $\Phi$ is a Maurer-Cartan element of $\ga_{\Delta}^P$ if{f} 
\begin{equation}
P_\Phi \Delta=0,
\end{equation}
or equivalently $\Delta\in ker(P_{\Phi})$. This follows immediately from eq. \eqref{eq:adp} and will be used repeatedly in the proof of Thm. \ref{machine}. 
\end{rem}

\begin{rem}\label{kerPa}
Let $L'$ be a graded Lie subalgebra of $L$ preserved by $D$ (for example $L'=Ker(P)$). Then $L'[1]\oplus\ga$ is stable under the multibrackets of Thm. \ref{voronov}. We denote by $(L'[1]\oplus \mathfrak{a})_{\Delta}^P$ the induced $\li[1]$-structure.  
\end{rem}

\begin{rem}\label{DKK}
Voronov's \cite[Thm. 2]{vor2} is actually formulated for any  degree 1
derivation $D$ of $L$ preserving $Ker(P)$ and satisfying $D\circ D=0$. We restrict ourselves to inner derivations for the sake of simplicity, and since all the derivations that appear in our examples are of this kind.

A ``semidirect product''   $L_\infty[1]$-algebra similar to the one in Thm. \ref{voronov} appeared in \cite{BFLS}   \cite{MapCone}.
\end{rem}



\subsection{{The tangent complex within Voronov's theory.}}\label{Tang}

In this subsection we study how Voronov's $\li[1]$-algebras behave under twisting. We will use this in \S\ref{heart} to provide an alternative argument for Thm. \ref{machine}.

It is well known  \cite[Prop.4.4]{Getzler} that one can twist an $L_\infty[1]$-algebra $\g$ by one of its Maurer-Cartan elements $\alpha$. One obtains a new $L_\infty[1]$-algebra $\g_\alpha$, sometimes called the \emph{tangent complex at $\alpha$}. Its $n$-th multibracket is  
\begin{equation}\label{eq:twb}
\{\dots\}^\alpha_n =\{\dots\}_n+\{\alpha,\dots\}_{n+1} +\frac{1}{2!}\{\alpha,\alpha,\dots\}_{n+2}+ \dots
\end{equation}
where $\{\dots\}_j$ denotes the $j$-th multibracket of $\g$.
 
A property of the tangent complex $\g_\alpha$ is that its Maurer-Cartan elements are in one to one correspondence with the deformations of $\alpha$, i.e.
\begin{equation}\label{eq:MCtw}
 \alpha+ \tilde{\alpha} \in MC(\g)
\;\;\;\Leftrightarrow \;\;\;
\tilde{\alpha} \in MC(\g_\alpha)
\end{equation}
(\cite[Prop. 12.2.33]{LodVal}   or direct computation). 
We express the notion of tangent complex in the setting of Voronov's theory (recall that the notation $\g(V)$ was defined in \S\ref{vorpaper}):

   \begin{lem}\label{truc}
Let $V:=(L,\ga, P,\Delta)$ be a  filtered V-data and let $\alpha :=(\Delta'[1],\Phi')$
be a Maurer-Cartan element of $\g(V)$. Then 
$$\g(V)_\alpha=\g(V_\alpha),$$
with $V_\alpha:=(L,\ga, P_{\Phi'},\Delta+\Delta').$
\end{lem}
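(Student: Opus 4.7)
The plan is to compare the two $L_\infty[1]$-algebras term by term on their common underlying space $L[1]\oplus \ga$. The binary bracket $\{x[1],y[1]\}$ depends only on $[\cdot,\cdot]$ in $L$, so it is identical in $\g(V)$ and $\g(V_\alpha)$; moreover, all twisting corrections to it vanish because the brackets $\{\alpha,\ldots,\alpha,x[1],y[1]\}$ of arity $\ge 3$ have $\ge 2$ $L[1]$-inputs, hence vanish by Thm.~\ref{voronov}. It therefore suffices to match the multibrackets \eqref{vorder} and \eqref{vorderlong} and the differential \eqref{diffV2}, with $D$ replaced by $\tilde D:=[\Delta+\Delta',\cdot]$ and $P$ by $P_{\Phi'}$.

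Expanding the twisted multibrackets via \eqref{eq:twb} and writing $\alpha=\Delta'[1]+\Phi'$, every term decomposes by multilinearity according to how many $\alpha$-copies contribute $\Delta'[1]$ and how many contribute $\Phi'$. The key structural input is, again by Thm.~\ref{voronov}, that any multibracket with $\ge 2$ $L[1]$-entries vanishes apart from the binary one. Hence in $\{x[1],a_1,\ldots,a_n\}^\alpha$ every $\alpha$ must contribute $\Phi'$, whereas in $\{a_1,\ldots,a_n\}^\alpha$ at most one $\alpha$ may contribute $\Delta'[1]$.

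In the single-$L[1]$ case, the $k$-th surviving term is $\tfrac{1}{k!}$ times $P$ applied to the iterated bracket of $x$ with $k$ copies of $\Phi'$ and then with $a_1,\ldots,a_n$. Because $\ga$ is abelian, $[\Phi',a_i]=0$, and so bracketing with $\Phi'$ commutes with bracketing with each $a_i$; after pulling the $\Phi'$-brackets past the $a_i$-brackets, the series in $k$ reassembles to $P\circ e^{[\cdot,\Phi']}[\ldots[x,a_1],\ldots,a_n]=P_{\Phi'}[\ldots[x,a_1],\ldots,a_n]$. In the no-$L[1]$ case, the ``one $\Delta'[1]$'' subfamily combines by exactly the same argument into $P_{\Phi'}[\ldots[\Delta',a_1],\ldots,a_n]$, accounting for the $[\Delta',\cdot]$ piece of $\tilde D a_1$. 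The ``all $\Phi'$'' subfamily, which uses \eqref{vorderlong}, reduces via the identity
\[
e^{[\cdot,\Phi']}(Da_1)=[e^{[\cdot,\Phi']}(\Delta),\,a_1]
\]
to $P_{\Phi'}[\ldots[Da_1,a_2],\ldots,a_n]$; summing the two subfamilies then gives $P_{\Phi'}[\ldots[\tilde Da_1,a_2],\ldots,a_n]$ as required. The differential $d^\alpha$ is treated by the same analysis applied to a single input.

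The main obstacle is the displayed identity above, which is really the only non-combinatorial step. It holds because $e^{[\cdot,\Phi']}$ is a graded Lie algebra automorphism of $L$ (well-defined in the filtered setting, Prop.~\ref{prop:fil}), because $D=[\Delta,\cdot]$, and because abelianity of $\ga$ gives $e^{[\cdot,\Phi']}(a_1)=a_1$; putting these together, $e^{[\cdot,\Phi']}(Da_1)=e^{[\cdot,\Phi']}[\Delta,a_1]=[e^{[\cdot,\Phi']}(\Delta),a_1]$. The hypothesis $\alpha\in MC(\g(V))$ is used only to guarantee that $V_\alpha$ is itself a V-data: a direct expansion of the Maurer--Cartan equation along the lines of Rem.~\ref{MCexp} shows its two components to be $[\Delta+\Delta',\Delta+\Delta']=0$ and $P_{\Phi'}(\Delta+\Delta')=0$, which are precisely the conditions ensuring that $\g(V_\alpha)$ is defined.
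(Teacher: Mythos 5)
Your proposal is correct and follows essentially the same route as the paper's proof: expand the twisted multibrackets, sort the terms by how many copies of $\alpha$ contribute $\Delta'[1]$ versus $\Phi'$ (using that nonbinary brackets with two or more $L[1]$-entries vanish, so at most one $\Delta'[1]$ survives), and resum the $\Phi'$-insertions into $P\circ e^{[\cdot,\Phi']}=P_{\Phi'}$ via the automorphism property of $e^{[\cdot,\Phi']}$ and $e^{[\cdot,\Phi']}|_{\ga}=\mathrm{Id}_\ga$. The paper organizes the same computation as an identification of summands $A_k,B_k,C_k$ with the three pieces $A,B,C$ of the brackets of $\g(V_\alpha)$, and, like you, defers the $n=1,2$ cases to "similar computations."
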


 This lemma is a generalization of the remark by Domenico Fiorenza that  $((L[1]\oplus \mathfrak{a})_{0}^P)_{(\Delta[1],0)}=(L[1]\oplus \mathfrak{a})_{\Delta}^P$. 
 We do not need to prove that $V_\alpha$ is a V-data since, as a twist of a $L_\infty[1]$-algebra, $\g(V_\alpha)$ is automatically a $L_\infty[1]$ algebra.

\begin{proof} Let $n>2$ and
  $\alpha:=(\Delta'[1],\Phi')\in L[1]\oplus \ga$. The $k$-th summand  ($k\ge 0$) of the r.h.s. of  equation \eqref{eq:twb}, applied to elements $x_i[1]+a_i$, can be rewritten as
 \begin{eqnarray*}
  \frac{1}{k!}\{\underset{k}{\underbrace{\alpha,\dots\alpha}},x_1[1]+a_1,\dots,x_n[1]+a_n\}_{n+k}  & = & A_k+B_k+C_k
 \end{eqnarray*}
 with
\begin{align}
A_k &=\frac{1}{(k-1)!} P[\dots[\dots[\Delta',\underset{k-1}{\underbrace{\Phi'], \dots, \Phi'}}], a_1,\dots,a_n],\label{clubk}\\
B_k &=\frac{1}{k!} \sum_{i=1}^n P[\dots[\dots[x_i,\underset{k}{\underbrace{\Phi'], \dots, \Phi'}}], a_1,\dots,\hat{a_i}\dots,a_n],\label{heartk}\\
C_k &=\frac{1}{k!} P[\dots[\dots[\Delta,\underset{k}{\underbrace{\Phi'], \dots, \Phi'}}], a_1,\dots,a_n]\label{diamondk},
\end{align}
defining $A_0:=0$.
Notice that \eqref{clubk} and \eqref{heartk} come from \eqref{vorder} (encoding  the $L[1]$-components of $\alpha$ and $x_i[1]+a_i$ respectively), and \eqref{diamondk} comes from \eqref{vorderlong}.

On the other hand the  brackets of $(L[1]\oplus \mathfrak{a})_{\Delta+\Delta'}^{P_{\Phi'}}$ for $n>2$ read
\begin{eqnarray*}
  \{x_1[1]+a_1,\dots,x_n[1]+a_n\}_{n}  & = & A+B+C
 \end{eqnarray*}
where
\begin{align*}
A &= P_{\Phi'}[\dots[\Delta', a_1],\dots,a_n],\\
B &=\sum_{i=1}^n P_{\Phi'}[\dots[x_i, a_1],\dots,\hat{a_i}\dots,a_n],\\
C &=P_{\Phi'}[\dots[\Delta,a_1],\dots,a_n].
\end{align*}
Since $e^{[\cdot,\Phi']}$ is a morphism of graded Lie algebras and   $e^{[\cdot,\Phi']}|_{\ga}=Id_{\ga}$ we have
$$e^{[\cdot,\Phi']}[\dots[x, a_1],\dots,a_n]=[\dots[e^{[\cdot,\Phi']}x, a_1],\dots,a_n]$$ for all $x\in L$.
Expanding $e^{[\cdot,\Phi']}$ as a series gives
$$A=\sum_k A_k,\;\;\;\;B=\sum_k B_k,\;\;\;\;C=\sum_k C_k,$$
therefore showing that the $n$-th multibrackets agree for $n>2$ .
Similar computations give the cases $n=1, 2$.  
\end{proof}



\subsection{The main tool}\label{heart}

Given a  V-data $(L,\ga, P,\Delta)$, we fix a Maurer-Cartan $\Phi$ of $\ga_{\Delta}^P$ and study the deformations of $\Delta$ and $\Phi$.  

In what follows, the  assumption  \emph{filtered} is there to ensure the convergences of the infinite sums appearing, and can be neglected on a first reading. We will address convergence issues in \S \ref{sec:conv}.

\begin{lem}\label{twist Vdata}
Let $(L,\ga, P,\Delta)$ be a filtered
V-data and let $\Phi\in MC(\ga_{\Delta}^P)$. Then 
$(L,\ga, P_\Phi,\Delta)$ is also a   V-data.    
   \end{lem}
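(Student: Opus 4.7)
The plan is to verify, one by one, the four axioms for a V-data listed in Def.~\ref{vdata} applied to $(L,\ga,P_\Phi,\Delta)$. The ambient graded Lie algebra $L$ and its abelian Lie subalgebra $\ga$ are unchanged, and the condition $[\Delta,\Delta]=0$ is likewise inherited from the original V-data. So only two non-trivial checks remain: (a) that $P_\Phi$ is indeed a projection onto $\ga$ whose kernel is a Lie subalgebra of $L$, and (b) that $\Delta\in\mathrm{Ker}(P_\Phi)_1$.

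For (b), I would simply invoke Remark~\ref{MCexp}: by assumption $\Phi\in MC(\ga_\Delta^P)$, which by that remark is equivalent to $\Delta\in\mathrm{Ker}(P_\Phi)$. Since $\Delta$ already lies in $L_1$, this gives $\Delta\in\mathrm{Ker}(P_\Phi)_1$ for free. The filtered hypothesis enters here only to guarantee that $e^{[\cdot,\Phi]}$, and hence $P_\Phi$, is well-defined; this is precisely the content of Prop.~\ref{prop:fil} referenced right before Remark~\ref{MCexp}, so no new convergence argument is needed.

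For (a), the fact that $P_\Phi$ is a projection is already recorded in the paragraph introducing \eqref{pphi}, using that $e^{[\cdot,\Phi]}|_\ga=\mathrm{Id}_\ga$ by the abelianity of $\ga$. To see that $\mathrm{Ker}(P_\Phi)$ is a graded Lie subalgebra, I would rewrite
\[
\mathrm{Ker}(P_\Phi)=\{x\in L\,:\,P(e^{[\cdot,\Phi]}x)=0\}=e^{-[\cdot,\Phi]}\bigl(\mathrm{Ker}(P)\bigr).
\]
Now $[\cdot,\Phi]$ is an inner derivation of the graded Lie algebra $L$, so in the filtered setting $e^{[\cdot,\Phi]}$ is a well-defined automorphism of $(L,[\cdot,\cdot])$. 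The preimage of a graded Lie subalgebra under a Lie algebra automorphism is again a graded Lie subalgebra, and since $\mathrm{Ker}(P)$ is one by hypothesis, so is $\mathrm{Ker}(P_\Phi)$.

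I don't expect any real obstacle here: the statement is essentially a packaging of two observations already made in the text (Remark~\ref{MCexp} and the discussion around \eqref{pphi}), together with the elementary fact that automorphisms preserve the class of Lie subalgebras. The only point requiring care is that all exponentials be meaningful, which is exactly what the filtered assumption buys us and which will be revisited more systematically in \S\ref{sec:conv}.
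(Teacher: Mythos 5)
Your proposal is correct and follows essentially the same route as the paper's own proof: well-definedness of $P_\Phi$ via Prop.~\ref{prop:fil}, the identification $\mathrm{Ker}(P_\Phi)=e^{[\cdot,-\Phi]}(\mathrm{Ker}(P))$ as the image of a Lie subalgebra under a Lie algebra automorphism, and $\Delta\in\mathrm{Ker}(P_\Phi)$ via Remark~\ref{MCexp}. Nothing is missing.
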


  \begin{proof}
   $P_{\Phi}$ is well-defined  in Prop. \ref{prop:fil} in \S\ref{sec:conv}. $Ker (P_{\Phi})=e^{[\cdot,-\Phi]}(Ker(P)) $ is a Lie subalgebra of $L$ since  $e^{[\cdot,-\Phi]}$ is a Lie algebra automorphism of $L$ and $ker(P)$ is a Lie subalgebra. Further $\Delta \in ker(P_{\Phi})$ by Remark \ref{MCexp}. Hence $(L,\ga, P_\Phi,\Delta)$ is a V-data.
 \end{proof}
   
The following is the main tool used in the rest of the paper. It says that the deformations of $\Delta$ and $\Phi$ are governed by 
$(L[1]\oplus \ga)_{\Delta}^{P_{\Phi}}$.
In the applications, $\Phi$  will be the object of interest, as it will correspond to morphisms, subalgebras, etc.

   \begin{thm}\label{machine}
Let $(L,\ga, P,\Delta)$ be a  filtered V-data and let $\Phi\in MC(\ga_{\Delta}^P)$. Then for all   $\tilde{\Delta}\in L_1$   and  $\tilde{\Phi}\in \ga_0$:
\begin{align}
\begin{cases}
 [\Delta+\tilde{\Delta},\Delta+\tilde{\Delta}]=0  \\
\Phi+ \tilde{\Phi} \in MC(\ga_{\Delta+\tilde{\Delta}}^{P})\end{cases}
\Leftrightarrow
(\tilde{\Delta}[1],\tilde{\Phi}) \in MC((L[1]\oplus \ga)_{\Delta}^{P_{\Phi}}).
\end{align} 
In this case, $\ga^P_{\Delta+\tilde{\Delta}}$  is a curved $\li[1]$-algebra. It is a  $\li[1]$-algebra exactly when $\tilde{\Delta}\in Ker(P)$.
\end{thm}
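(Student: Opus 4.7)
The plan is to expand the Maurer--Cartan equation for $(\tilde\Delta[1],\tilde\Phi)$ inside the $\li[1]$-algebra $(L[1]\oplus\ga)_\Delta^{P_\Phi}$---which is well-defined by Lemma \ref{twist Vdata} and Theorem \ref{voronov}---and to check that its $L[1]$-component encodes the first equation of the system while its $\ga$-component encodes the second. Since $(\tilde\Delta[1],\tilde\Phi)$ has degree zero, the Maurer--Cartan series is a formal sum of multibracket terms of the shapes prescribed by \eqref{diffV2}--\eqref{vorderlong}, which split naturally into an $L[1]$-part and a $\ga$-part.

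For the $L[1]$-component, only the unary bracket \eqref{diffV2} applied to $(\tilde\Delta[1],0)$ and the binary bracket \eqref{crochet} applied to $(\tilde\Delta[1],\tilde\Delta[1])$ can contribute, since all other multibrackets of Theorem \ref{voronov} land in $\ga$. Using $|\tilde\Delta|=1$ and $[\Delta,\Delta]=0$, a short sign-check turns the total into $-\tfrac12[\Delta+\tilde\Delta,\Delta+\tilde\Delta][1]$, whose vanishing is exactly the first condition.

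For the $\ga$-component, the terms bearing $\tilde\Delta$ arise from the $\ga$-output of \eqref{diffV2} on $(\tilde\Delta[1],0)$ and from \eqref{vorder} with $x=\tilde\Delta$ and all other entries equal to $\tilde\Phi$; after collecting the binomial factors from the Maurer--Cartan series they total $\sum_{n\ge 0}\tfrac{1}{n!}P_\Phi[\dots[\tilde\Delta,\tilde\Phi],\dots,\tilde\Phi]$ with $n$ copies of $\tilde\Phi$. The $\tilde\Delta$-free terms arise from \eqref{diffV2} on $(0,\tilde\Phi)$ and from \eqref{vorderlong} with all entries equal to $\tilde\Phi$; after writing $D\tilde\Phi=[\Delta,\tilde\Phi]$ and reindexing they total $\sum_{n\ge 1}\tfrac{1}{n!}P_\Phi[\dots[\Delta,\tilde\Phi],\dots,\tilde\Phi]$. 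The decisive step is a resummation using that $\ga$ is abelian: $[\Phi,\tilde\Phi]=0$ forces $[\cdot,\Phi]$ and $[\cdot,\tilde\Phi]$ to commute, so $P_\Phi\circ e^{[\cdot,\tilde\Phi]}=P\circ e^{[\cdot,\Phi+\tilde\Phi]}=P_{\Phi+\tilde\Phi}$. Combined with the hypothesis $\Phi\in MC(\ga_\Delta^P)$, which via Remark \ref{MCexp} reads $P_\Phi(\Delta)=0$, the two sums collapse to $P_{\Phi+\tilde\Phi}(\tilde\Delta)$ and $P_{\Phi+\tilde\Phi}(\Delta)$ respectively, so the whole $\ga$-component equals $P_{\Phi+\tilde\Phi}(\Delta+\tilde\Delta)$. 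By Remark \ref{MCexp} applied now to the (curved) V-data $(L,\ga,P,\Delta+\tilde\Delta)$, this vanishes iff $\Phi+\tilde\Phi\in MC(\ga^P_{\Delta+\tilde\Delta})$.

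The last assertion follows at once from Theorem \ref{voronovderived}: once $[\Delta+\tilde\Delta,\Delta+\tilde\Delta]=0$ the quadruple $(L,\ga,P,\Delta+\tilde\Delta)$ is a curved V-data, hence $\ga^P_{\Delta+\tilde\Delta}$ is a curved $\li[1]$-algebra, and it is uncurved precisely when $\Delta+\tilde\Delta\in Ker(P)$, equivalently (since $\Delta\in Ker(P)$ from the start) when $\tilde\Delta\in Ker(P)$. I expect the main obstacle to be the combinatorial and sign bookkeeping of the infinite sums in the $\ga$-component, where the collapse is not evident term-by-term and only becomes clean after invoking the abelianity identity $e^{[\cdot,\Phi]}\circ e^{[\cdot,\tilde\Phi]}=e^{[\cdot,\Phi+\tilde\Phi]}$. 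A more conceptual alternative, worth including as an independent check, uses Lemma \ref{truc}: $(0,\Phi)$ is a Maurer--Cartan element of $\g(V)=(L[1]\oplus\ga)_\Delta^P$ precisely because $\Phi\in MC(\ga_\Delta^P)$; the lemma identifies its tangent complex $\g(V)_{(0,\Phi)}$ with $(L[1]\oplus\ga)_\Delta^{P_\Phi}$; and \eqref{eq:MCtw} then reduces the statement to the special case $\Phi=0$, where only the naive expansion is needed.
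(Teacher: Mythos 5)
Your proposal is correct and follows essentially the same route as the paper: expand the Maurer--Cartan series in $(L[1]\oplus \ga)_{\Delta}^{P_{\Phi}}$, identify the $L[1]$-component with $-\tfrac12[\Delta+\tilde{\Delta},\Delta+\tilde{\Delta}][1]$, resum the $\ga$-component to $P e^{[\cdot,\Phi+\tilde{\Phi}]}(\Delta+\tilde{\Delta})$ using $P_{\Phi}\Delta=0$ and the abelianity of $\ga$, and conclude via Remark \ref{MCexp}. Even your ``conceptual alternative'' via Lemma \ref{truc} and the twisting property \eqref{eq:MCtw} is exactly the second argument the paper records at the end of \S\ref{heart}.
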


\begin{proof}{By Lemma \ref{twist Vdata}  we can apply Thm. \ref{voronov} to obtain the $\li[1]$-algebra $(L[1]\oplus \ga)^{P_{\Phi}}_{\Delta}$, whose multibrackets we
 denote by $\{\dots\}$.}
  We compute each summand appearing in the l.h.s of the Maurer-Cartan equation for $(\tilde{\Delta}[1],\tilde{\Phi})$ in $(L[1]\oplus \ga)^{P_{\Phi}}_{\Delta}$, which reads 
 \begin{equation}\label{MCtilde}
\sum_{n=1}^{\infty}\frac{1}{n!}\{(\tilde{\Delta}[1],\tilde{\Phi}),\dots,(\tilde{\Delta}[1],\tilde{\Phi})\}. 
\end{equation}
 We have 
 \begin{align*}
 \{(\tilde{\Delta}[1],\tilde{\Phi})\}&=(-[\Delta,\tilde{\Delta}][1], \;\;P_{\Phi}\tilde{\Delta}\;\;\;\;\;\;\;\;\;\;\;\;\;\;\;\;\;\;\;\;\;\;\;\;\;\;\;+P_{\Phi}[\Delta, \tilde{\Phi}]\;\;\;\;\;\;\;\;\;\;\;\;\;\;\;\;\;\;\;\;),\\
\{(\tilde{\Delta}[1],\tilde{\Phi}),(\tilde{\Delta}[1],\tilde{\Phi})\}&=(-[\tilde{\Delta},\tilde{\Delta}][1], \;\;2\cdot P_{\Phi}[\tilde{\Delta}, \tilde{\Phi}] \;\;\;\;\;\;\;\;\;\;\;\;\;\;\;\;+P_{\Phi}[[\Delta, \tilde{\Phi}],\tilde{\Phi}]\;\;\;\;\;\;\;\;\;\;\;\;\;\;),\\
\{\underbrace{(\tilde{\Delta}[1],\tilde{\Phi}),\dots,(\tilde{\Delta}[1],\tilde{\Phi})}_{n \text{ times}}\}&=
 (\;\;\;\;\;0\;\;\;\;\;\;\;\;\;\;,\;\; n \cdot P_{\Phi}[[[\tilde{\Delta}, \underbrace{\tilde{\Phi}],\dots],\tilde{\Phi}}_{n-1 \text{ times}}] +P_{\Phi}[[[[\Delta, \underbrace{\tilde{\Phi}],\tilde{\Phi}],\dots],\tilde{\Phi}}_{n \text{ times}}]).
 \end{align*}
The last line refers to the $n$-th term for $n \ge 3$, and holds
 since   the higher brackets with two or more entries in $L[1]\oplus \{0\}$ vanish.
 
Hence the $L[1]$-component of \eqref{MCtilde} is just  
 $-\frac{1}{2} [\Delta+\tilde{\Delta},\Delta+\tilde{\Delta}][1]$. 
The $\ga$-component of \eqref{MCtilde} is
\begin{align*}
&P_{\Phi}\left( e^{[\cdot, \tilde{\Phi}]}\tilde{\Delta}+ (e^{[\cdot, \tilde{\Phi}]}-1){\Delta}\right)\\
=&P_{\Phi}e^{[\cdot, \tilde{\Phi}]}(\Delta+\tilde{\Delta})\\
=&P e^{[\cdot, \Phi+\tilde{\Phi}]}(\Delta+\tilde{\Delta}),
\end{align*}
which by Remark \ref{MCexp} is the l.h.s. of the Maurer-Cartan equation in $\ga_{\Delta+\tilde{\Delta}}^{P}$ 
 for $ \Phi+\tilde{\Phi}$. Here in the first equation we used   Remark \ref{MCexp}.

{The last two statements follow from Thm. \ref{voronovderived}.}
\end{proof}

We end this subsection presenting
an alternative, more conceptual proof of Thm. \ref{machine}. It is given by:
 \begin{eqnarray*}
 (\tilde{\Delta}[1],\tilde{\Phi}) \in MC((L[1]\oplus \ga)_{\Delta}^{P_{\Phi}})      & \Leftrightarrow &    ((\Delta+\tilde{\Delta})[1],\Phi+\tilde{\Phi}) \in MC((L[1]\oplus \ga)_{0}^{P})
\\
&  \Leftrightarrow & \begin{cases}
 [\Delta+\tilde{\Delta},\Delta+\tilde{\Delta}]=0  \\
\Phi+ \tilde{\Phi} \in MC(\ga_{\Delta+\tilde{\Delta}}^{P}).\end{cases}
\end{eqnarray*}
 The first equivalence is the conjunction of Lemma \ref{truc} (applied to $V=(L,\ga,P,0)$ and $\alpha=(\Delta[1],\Phi)$) and
 of property \eqref{eq:MCtw}. The second equivalence comes from the fact that the only non-vanishing brackets of $(L[1]\oplus \ga)_{0}^{P}$ are given by $d(x[1])= Px$ for $x\in L$, by  \eqref{crochet}  and \eqref{vorder}.



\subsection{Convergence issues}\label{sec:conv}

The left hand side of the Maurer-Cartan equation \eqref{MaCa} is generally an infinite sum.
In this subsection we review Getzler's notion of filtered $\li$-algebra \cite{GetzlerGoe}, which guarantees that the above infinite sum converges. We show that simple assumptions on   V-data ensure that the Maurer-Cartan equations of the (curved) $\li[1]$-algebras we construct in  Thm. \ref{machine} (and {Lemma \ref{truc}}) do converge.

\begin{df}
Let $V$ be a graded vector space.  A \emph{complete filtration}  is   a descending filtration by graded subspaces
$$V=\cF^{-1}V\supset\cF^0V\supset \cF^1V\supset \dots$$
such that the canonical projection
$V \to \underset{\leftarrow}{\lim} V/\cF^nV$ {is an isomorphism}.  {Here $$\underset{\leftarrow}{\lim} V/\cF^nV {:=}\{\overset{\rightarrow}{x}\in {\Pi_{n\geq -1}}V/\cF^nV\;:\; P_{i,j}({x_j)=x_i} \text{ when }  i<j\},$$ where $P_{i,j} \colon  V/\cF^jV\longrightarrow V/\cF^iV$ is the canonical {projection} induced by the inclusion ${\cF^jV\subset \cF^i V}$.}
\end{df}
\begin{rem}\label{rem:dirprod}
If $V$ can be written as a direct product of subspaces $V=\prod_{k\ge -1} V^k$, then $\{\cF^nV\}_{n\ge -1}$ is a complete filtration of $V$, where $\cF^nV:=\prod_{k\ge n}V^k$.
\end{rem}

\begin{df}\label{def:filteredli}
Let $W$ be a curved $\li[1]$-algebra. We say that $W$ is \emph{filtered}\footnote{Our definition differs from Getzler's, which requires that $W=\cF^{0}W$ and that the multibrackets have filtration degree zero except for   the zero-th bracket which  has filtration degree one.}  if there exists a complete filtration on the vector space $W$
such that
all multibrackets $\{\dots\}$ have filtration degree $-1$.
\end{df}
Notice that for an element $\Phi\in W$ of   filtration degree $1$, we have  $\{\Phi,\dots,\Phi\}_n\in \cF^{n-1}W$ for all $n$, so the infinite sum
\begin{equation}\label{eq:mclhs}
\sum_{n=0}^{\infty} \frac{1}{n!}\{ {\Phi,\dots,\Phi}\}_n
\end{equation}
 converges in $W$ by the completeness of the filtration.   Indeed, setting
 $w_i:=\sum_{n=0}^{i} \frac{1}{n!}\{ {\Phi,\dots,\Phi}\}_n$ mod $\cF^i W$ for all $i$ defines 
 an element $\overset{\rightarrow}{w}\in \Pi_{n\geq -1}W/\cF^nW$ which turns out to belong to $\underset{\leftarrow}{\lim} W/\cF^nW\cong W$.

 We define \emph{Maurer-Cartan elements} to be  $\Phi\in W_0\cap \cF^1W$ for which the infinite sum \eqref{eq:mclhs} vanishes, and we write $MC(W)$ for the set of Maurer-Cartan elements.

\begin{df}\label{triple}
{Let $(L,\ga, P,\Delta)$ be a curved V-data (Def. \ref{vdata}). 
 We say that this {curved V-data} is }
\emph{filtered} if  
 there exists a complete filtration on the graded vector space $L$ 
such that
\begin{itemize}
\item[a)] The Lie bracket has filtration degree zero, i.e. $[\cF^iL,\cF^jL]\subset \cF^{i+j}L$ for all $i,j \ge -1$,
\item[b)] $\ga_0 \subset \cF^1L$,
\item[c)] the projection $P$ has filtration degree zero, i.e. $P(\cF^iL)\subset 
\cF^iL$ for all $i\ge -1$.
\end{itemize}
\end{df}

\begin{prop}\label{prop:fil}
 Let $(L,\ga, P,\Delta)$ be a  
{filtered,}  curved {V-data}.
  Then for every $\Phi\in MC(\ga_{\Delta}^P)\subset \ga_0$:
\begin{itemize} 
\item[1)] the projection $P_{\Phi} :=P \circ e^{[\cdot,\Phi]}\colon L \to \ga$ is  well-defined and has filtration degree zero.
\item[2)]  the curved $\li[1]$-algebra   $\ga_{\Delta}^{P_{\Phi}}$ given by Thm. \ref{voronovderived} is filtered by $\cF^n\ga:=\cF^nL\cap \ga$. Further, the sum \eqref{eq:mclhs}  converges for any degree zero element $a$ of $\ga$.

\item[3)] if $\Delta \in ker(P)$: the   $\li[1]$-algebra $(L[1]\oplus \mathfrak{a})_{\Delta}^{P_{\Phi}}$ given by Thm. \ref{voronov} is filtered by $\cF^n(L[1]\oplus \mathfrak{a}):=(\cF^nL)[1]\oplus \cF^n\ga$. Further, the sum \eqref{eq:mclhs}  converges for any degree zero element  element $(x[1],a)$ of $L[1]\oplus \ga$.
\end{itemize}
\end{prop}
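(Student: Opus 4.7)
The plan is to verify the three parts in order, relying on the filtration axioms a)--c) of Def. \ref{triple} together with the explicit formulas of Thms. \ref{voronovderived} and \ref{voronov}.

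For part 1), axiom b) gives $\Phi \in \cF^1 L$, and then axiom a) implies that $[\cdot, \Phi]$ increases filtration by one: $[\cdot, \Phi]^n(\cF^i L) \subset \cF^{i+n} L$. Hence the partial sums of $e^{[\cdot, \Phi]}x = \sum_{n \geq 0} \frac{1}{n!}[\cdot, \Phi]^n x$ form a Cauchy net with respect to the filtration, which converges in $L$ by completeness, with limit in $\cF^i L$ whenever $x \in \cF^i L$. Composing with $P$, which preserves filtration by axiom c), gives $P_\Phi x \in \cF^i \ga$, establishing that $P_\Phi$ is well-defined with filtration degree zero.

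For part 2), Thm. \ref{voronovderived} applied to the curved V-data $(L, \ga, P_\Phi, \Delta)$ produces the curved $\li[1]$-algebra $\ga_\Delta^{P_\Phi}$, and the task is to check that its multibrackets have filtration degree $-1$ with respect to $\cF^n \ga := \cF^n L \cap \ga$. The zero-th bracket $P_\Phi \Delta$ lies in $\ga = \cF^{-1} \ga$ automatically; for $n \geq 1$ and $a_i \in \cF^{k_i} \ga$, combining $\Delta \in \cF^{-1} L$ with iterated application of axiom a) gives that $[\cdots[[\Delta, a_1], a_2], \ldots, a_n]$ lies in $\cF^{k_1 + \cdots + k_n - 1} L$, and part 1) ensures that $P_\Phi$ of it lies in $\cF^{k_1 + \cdots + k_n - 1} \ga$. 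Convergence of the Maurer-Cartan sum at any $a \in \ga_0$ then follows, since axiom b) gives $a \in \cF^1 \ga$, so the $n$-th term lies in $\cF^{n-1} \ga$ and completeness closes the argument.

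For part 3), Remark \ref{MCexp} combined with $\Phi \in MC(\ga_\Delta^P)$ ensures $\Delta \in \ker P_\Phi$, so $(L, \ga, P_\Phi, \Delta)$ is a (non-curved) V-data and Thm. \ref{voronov} produces the $\li[1]$-algebra $(L[1]\oplus\ga)_\Delta^{P_\Phi}$. The plan is to verify filtration degree $-1$ case by case for the four families of multibrackets \eqref{diffV2}--\eqref{vorderlong}: the binary bracket $[x, y][1]$ and the mixed brackets $\{x[1], a_1, \ldots, a_n\} = P_\Phi[\cdots[x, a_1], \ldots, a_n]$ in fact preserve filtration (degree zero, which is stronger than $-1$) by axiom a) and part 1); the pure $\ga$-brackets behave exactly as in part 2); and the differential $d$ shifts filtration by $-1$ because of the $[\Delta, \cdot]$ factor in both of its components.

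The most delicate point is convergence of the Maurer-Cartan sum at an arbitrary degree-zero element $w = (x[1], a)$, because $x \in L_1$ need only lie in $\cF^{-1} L$, so the naive estimate $\{w, \ldots, w\}_n \in \cF^{n-1}$ is not available. The plan here is to exploit the explicit expansion derived in the proof of Thm. \ref{machine}: for $n \geq 3$ the $L[1]$-component of $\{w, \ldots, w\}_n$ vanishes, while the $\ga$-component is a sum of $P_\Phi$ applied to iterated Lie brackets in which $a \in \cF^1 L$ appears at least $n - 1$ times. Axiom a) then forces each such iterated bracket into $\cF^{n-2} L$, independently of the filtration level of $x$ or $\Delta$, and part 1) carries this through $P_\Phi$ into $\cF^{n-2} \ga$, so that the Maurer-Cartan sum converges by completeness of the filtration.
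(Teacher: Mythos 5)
Your estimates for the filtration degrees of the multibrackets and your convergence arguments follow essentially the same route as the paper: part 1) is identical, and in part 3) your observation that the naive bound fails for the $L[1]$-component but that the higher brackets admit at most one entry from $L[1]$ (so that $a\in\cF^1\ga$ appears at least $n-1$ times) is exactly the paper's point that ``the non-vanishing multibrackets accept at most two entries from $L[1]$''. Your explicit case analysis in part 3) is somewhat more detailed than the paper's, which simply invokes 1) and axiom a).

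There is, however, one genuine gap. The statement asserts that $\ga_{\Delta}^{P_{\Phi}}$ is \emph{filtered} by $\cF^n\ga:=\cF^nL\cap \ga$, and by Def. \ref{def:filteredli} this requires $\{\cF^n\ga\}$ to be a \emph{complete} filtration of the vector space $\ga$, i.e.\ that $\ga \to \underset{\leftarrow}{\lim}\, \ga/\cF^n\ga$ be an isomorphism. This is not automatic: a subspace of a complete filtered vector space need not be complete for the induced filtration (the limit of a sequence of partial sums lying in $\ga$ is a priori only an element of $L$). The same issue propagates to part 3), where completeness of $(\cF^nL)[1]\oplus\cF^n\ga$ rests on completeness of the $\ga$-summand, and it also affects your convergence claims: in part 2) you write that ``completeness closes the argument'', but the completeness being invoked is that of the filtration on $\ga$, which you have not established --- without it you only know that the Maurer--Cartan sum converges to an element of $L$, not of $\ga$. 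The paper devotes a dedicated argument to this point, proving injectivity and surjectivity of $\ga \to \underset{\leftarrow}{\lim}\, \ga/\cF^n\ga$ by embedding into $\underset{\leftarrow}{\lim}\, L/\cF^nL$ and using $\cap_i \cF^iL=\{0\}$. You should supply this step (or an equivalent closedness argument for $\ga$ in $L$) to complete the proof.
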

\begin{proof}
1) For every $x\in L$, say $x\in \cF^{i}L$, by Def. \ref{triple} a)b) we have
$$[[\dots[x,\underbrace{\Phi],\dots],\Phi}_{n \text{ times}}]\in \cF^{i+n}L.$$ Hence  the completeness of the filtration on $L$ implies that 
$e^{[\cdot,\Phi]}$ is a well-defined endomorphism of $L$.
The above also shows that $e^{[\cdot,\Phi]}$ has filtration degree zero, and since $P$ does by Def. \ref{triple} c), we conclude that 
the projection $P_{\Phi}$  has filtration degree zero.

2) We first check that $ \{\cF^n\ga\}_{n\ge -1}$ is a complete filtration of the vector space $\ga$.

{The map $\ga \to \underset{\leftarrow}{\lim} \ga/\cF^n\ga$ is
surjective}. Indeed, take an element of $\underset{\leftarrow}{\lim}
\ga/\cF^n\ga$, and consider its image under the canonical embedding
$\underset{\leftarrow}{\lim} \ga/\cF^n\ga \hookrightarrow
\underset{\leftarrow}{\lim} W/\cF^nW$. It is a sequence of elements
$\{ a_i \text{ mod }\cF^i W\}_{i \ge -1}$ where $a_i\in \ga$. The
surjectivity of $W \to \underset{\leftarrow}{\lim} W/\cF^nW$ implies
that there is an element $w\in W$ such that $a_i \text{ mod } \cF^i
W=w \text{ mod } \cF^i W$ for all $i$, which implies $w\in \cF^i
W+\ga$ for all $i$ and hence
$w \in \cap_i (\cF^i W+\ga)$. Since $ \cap_i (\cF^i W)=\{0\}$ (by the
injectivity of $W \to \underset{\leftarrow}{\lim} W/\cF^nW$), this
means that $w\in \ga$.

{The map $\ga \to \underset{\leftarrow}{\lim} \ga/\cF^n\ga$ is injective}. Indeed, an element $a \in \ga$ is sent to 0 if and only if $a\in \cap_i (\cF^i \ga)$. But $\cap_i (\cF^i \ga)\subset \cap_i (\cF^i W)$, which is $\{0\}$ {as seen above.}

The   multibrackets of $\ga_{\Delta}^{P_{\Phi}}$ is given by
$P_{\Phi}[\dots[[\Delta,\bullet],\bullet],\dots,\bullet]$
(see Thm. \ref{voronovderived}). Using 1) and Def. \ref{triple} a), we see that this multibracket has filtration degree $-1$.

For the last statement, notice that $\ga_0 \subset \cF^1\ga$
by Def. \ref{triple} b).

3) $\{(\cF^nL)[1]\oplus \cF^n\ga\}_{n\ge -1}$ is a complete filtration  of the vector space $L[1]\oplus \mathfrak{a}$ because   the two summands are complete filtrations of $L[1]$ and $\ga$ respectively (by assumption and by 2) respectively).
The multibrackets of    $(L[1]\oplus \mathfrak{a})_{\Delta}^{P_{\Phi}}$
are given in Thm.  \ref{voronov}, and all have filtration degree $-1$ by 1) and 
Def. \ref{triple} a). 

For the last statement, notice that the non-vanishing multibrackets of $(L[1]\oplus \mathfrak{a})_{\Delta}^{P_{\Phi}}$ accept at most two entries from $L[1]$, and use again  $\ga_0 \subset \cF^1\ga$.
 \end{proof}

A version of Prop. \ref{prop:fil} in which the curved V-data is not assumed to be filtered, and working in the formal setting, is given in \cite{FZalg}.

\subsection{Equivalences of  Maurer-Cartan elements}\label{sec:sym}
 
Let $W$ be an $\li[1]$-algebra. On  $MC(W)$, the set of Maurer-Cartan elements, there is a canonical  
involutive (singular) distribution $\cD$   which induces an equivalence relation on $MC(W)$ known as \emph{gauge equivalence}. 
More precisely, each $z\in W_{-1}$ defines a vector field $\cY^z$ on $W_0$, whose value at $m\in W_0$ is\footnote{The infinite sum   \eqref{gaugeaction} is guaranteed to converge if $W$ is  filtered and $W_{-1}\subset \cF^{1}W$, see \S \ref{sec:conv}. For  the example we consider in \S\ref{sec:tpois}, this sum is actually finite.}
  
\begin{equation}\label{gaugeaction}
\mathcal{Y}^z|_m:=dz+\{z,m\}+\frac{1}{2!}\{z,m,m\}+\frac{1}{3!}\{z,m,m,m\}+\dots.
\end{equation}
This vector field is tangent to $MC(W)$. The distribution at the point $m\in MC(W)$ is defined as
$\cD|_m=\{\mathcal{Y}^z|_m: z\in W_{-1}\}$.

\begin{rem} We give a justification of the above statements, see also 
\cite[\S 3.4.2]{KontSoib} \cite[\S 2.5]{MerkLI}\cite[\S 2.2]{FukDef}. Suppose $W$ is  finite-dimensional, so that the $\li[1]$-algebra structure is encoded\footnote{The multibrackets on $W$ are recovered from $Q$ applying Thm. \ref{voronovderived} to the V-data $(L=\chi(W), \ga=\{\text{constant vector fields on $W$}\}, P(X)=X|_0, Q)$.} by a degree 1, self-commuting vector field $Q$ on $W$ \cite[Ex. 4.1]{Vor}. 
{We recall the following  fact, that holds for any vector field $X$ on $W_0$ and any element $m\in W_0$ (which defines a constant vector field $m$ on $W_0$):} 
\begin{equation}\label{eq:push}
X|_m=(e^{[m,\cdot]}X)|_0.
\end{equation}
{Indeed, both sides equal $((\phi_{-1})_*X)|_0$, where $\phi$ denotes the time one flow of $m$ (translation by $m$).
Eq. \eqref{eq:push} applied to $X=Q$ implies immediately that a point $m\in W_0$ is a zero of $Q$ if{f} $-m$ satisfies the Maurer-Cartan equation \eqref{MaCa}.}

View $z\in W_{-1}$ as a constant (degree $-1$) vector field on $W$. Then $[Q,z]$  is a degree zero vector field.
 As $\cL_{[Q,z]}Q=[[Q,z],Q]=0$, the flow of $[Q,z]$ preserves  the set of zeros of  $Q$, and hence $[Q,z]$ is tangent to this set. {Eq. \eqref{eq:push} applied to $X=[Q,Z]$ implies} that $[Q,z]|_{W_0}$ is the pushforward by $-Id_{W_0}$ of $\mathcal{Y}^z$, therefore $\mathcal{Y}^z$ is tangent to $MC(W)$.
 
 {A computation shows that 
$\cD$ can also be described in terms of all degree $-1$ vector fields: 
$\cD|_m=\{[Q,Z]|_m: Z\in \chi_{-1}(W)\}$ for all $m\in MC(W)$.
Since $[[Q,Z],[Q,Z']]=[Q,[[Q,Z],Z']]$ it follows that $\cD$ is involutive.}
\end{rem}
 
We will display explicitly the equivalence relation  induced on twisted Poisson structures in \S \ref{sec:tpois}, and show  that in this case the equivalence classes coincide with the orbits of a 
group action.


\section{Applications to Poisson geometry}\label{Poisson}

In this section we apply the machinery developed in \S \ref{sec:main} to examples arising from Poisson geometry. We study deformations of 
Poisson manifolds and coisotropic submanifolds in \S\ref{cois}. We consider deformations of 
Courant algebroids and Dirac structures in \S\ref{dirac}, focusing on the special case of twisted Poisson structures (and discussing equivalences) in \S\ref{sec:tpois}. Finally, we consider deformations of
{Courant algebroids and generalized complex structures in} \S\ref{sec:gcs}, discussing the case of complex structures in \S\ref{sec:cs}.

\subsection{Coisotropic submanifolds of Poisson manifolds}
 
\label{cois}

In this subsection we consider deformations of Poisson structures on a  manifold $M$ and deformations of coisotropic submanifolds. {We build on work of Oh and Park \cite{OP}, who realized that  deformations of a coisotropic submanifold of a fixed symplectic manifold are governed by a $L_{\infty}[1]$-algebra, and on work of Cattaneo and Felder 
\cite{CaFeCo2} who associate an $L_{\infty}[1]$-algebra to any coisotropic submanifold of a Poisson manifold.}

 Our main reference for this deformation problem is \cite[\S 3.2]{FloDiss}, which is based on \cite{OP} and \cite{CaFeCo2}. Recall that a \emph{Poisson structure} on $M$ is a bivector field $\pi$ on $M$ such that $[\pi,\pi]=0$, where the bracket denotes the Schouten bracket, and that a submanifold $C\subset (M,\pi)$ is \emph{coisotropic} if $\pi^{\sharp} TC^{\circ}\subset TC$, where $TC^{\circ}:=\{\xi\in T^*M|_C:   \xi|_{TC}=0\}$ and $\pi^{\sharp} 
 \colon T^*M\to  TM$ is the contraction with $\pi$ \cite{CW}.

Let $M$ be a manifold. Let $C\subset M$ be a submanifold. Fix an embedding of the normal bundle $\nu C:=TM|_C/TC$ into a tubular neighborhood of $C$ in $M$, such that the embedding {and its derivative} are the identity on $C$. 
In the following we will identify $\nu C$ with its image in $M$.

 We say that a vector field on $\nu C$ is \emph{fiberwise polynomial} if it preserves the fiberwise polynomial functions on the vector bundle $\nu C$. 
  {Such a vector field $X$ has \emph{polynomial degree $n$} (denoted $|X|_{pol}=n$) if  its action on fiberwise polynomial functions raises their degree (as polynomials) at most by $n$.   
  Locally,  choose local coordinates on $C$ and linear coordinates along the fibers of $\nu C$, which we denote collectively by $x$ and $p$ respectively. Then the {fiberwise polynomial} vector fields are exactly those which are sums of expressions $f_1(x)F_1(p) \pd{x}$ and $f_2(x)F_2(p)\pd{p}$ where $f_i\in C^{\infty}(C)$ and the $F_i$ are polynomials.
The   polynomial degrees of the two vector fields exhibited here are $deg(F_1)$ and $deg(F_2)-1$ respectively. }

Consider $\chi^{\bullet}(\nu C)$, the space of multivector fields on the total space $\nu C$, and denote by $\chi^{\bullet}_{fp}(\nu C)$ the sums of products of fiberwise polynomial vector fields. $\chi^{\bullet}(\nu C))[1]$ is a graded Lie algebra when endowed with the Schouten bracket $[\cdot,\cdot]$, and 
$\chi^{\bullet}_{fp}(\nu C)[1]$ is a graded Lie subalgebra. {The notion of polynomial degrees carries on to fiberwise polynomial  multivector fields, by $|X_1\wedge\dots\wedge X_k|_{pol}=\sum_i |X_i|_{pol}$. The Schouten bracket  preserves the polynomial degree (this is clear if we think of multivector fields as acting on tuples of functions).}

 Sections in $\Gamma(\wedge \nu C)$ 
can be regarded as elements of $\chi^{\bullet}_{fp}(\nu C)$ which are {vertical} (tangent to the fibers) and fiberwise constant.
 {A \emph{fiberwise polynomial Poisson} bivector field on $\nu C$ is an element $\pi \in  \chi^{2}_{fp}(\nu C)$ such that
 $[\pi,\pi]=0$. Notice that the associated Poisson bracket raises the degree of fiberwise polynomial functions on $\nu C$ by at most 
$|\pi|_{pol}$.}

\begin{rem}
The condition that a Poisson structure  be fiberwise polynomial is quite strong. 
The results of this subsection are extended in \cite{OPPois}  to  Poisson structures in a neighborhood $U\subset \nu C$ of the zero section which are ``fiberwise entire'', in the following sense: the Poisson bracket of two fiberwise polynomials functions, restricted to $U\cap \nu_xC$ , is given by a converging power series (for any $x\in C$).
\end{rem}

\begin{lem}\label{keycoiso} Let $\pi$ be  a {fiberwise polynomial} Poisson structure  on $\nu C$.
 The following quadruple forms a curved V-data:
\begin{itemize}
\item the graded Lie algebra $L:=\chi^{\bullet}_{fp}(\nu C)[1]$
\item its abelian subalgebra $\ga:=\Gamma(\wedge \nu C)[1]$
\item the natural projection $P \colon L \to \ga$ given by restriction to $C$ and projection along $\wedge T(\nu C)|_C\to \wedge \nu C$  
\item $\Delta:=\pi$,
\end{itemize}
hence by Thm. \ref{voronovderived} we obtain a  curved $L_{\infty}[1]$-structure  $\ga^P_{\Delta}$.

 {Its Maurer-Cartan equation reads
\begin{equation}\label{eq:mccoiso}
P\sum_{n=0}^{|\pi|_{pol}+2}\frac{1}{n!}[[\dots[\pi,\underbrace{\Phi],\dots],\Phi}_{n \text{ times }}]=0,
\end{equation}
where $\Phi\in \Gamma(\nu C)[1]$  is seen as a vertical vector field on $\nu C$.}
$\Phi\in \Gamma(\nu C)[1]$ is a Maurer-Cartan element in $\ga^P_{\Delta}$ if{f} ${graph(-\Phi)}$  is a coisotropic submanifold of $(\nu C, \pi)$.

Further, the above quadruple forms a   V-data if{f} $C$  is a coisotropic
submanifold of $(\nu C,\pi)$.
\end{lem}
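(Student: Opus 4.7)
The plan is to verify the four axioms of a curved V-data (Def.~\ref{vdata}) and then read off the Maurer--Cartan equation and its geometric meaning from Thm.~\ref{voronovderived}. That $L=\chi^{\bullet}_{fp}(\nu C)[1]$ is a graded Lie algebra is clear since the Schouten bracket preserves polynomial degree. Abelianity of $\ga=\Gamma(\wedge\nu C)[1]$ holds because, in fiber coordinates $(x,p)$, its elements are finite sums of terms $f(x)\,\partial_p\wedge\cdots\wedge\partial_p$, and any two such terms commute under the Schouten bracket. The map $P$ is a projection by construction, $\Delta=\pi\in L_1$, and $[\pi,\pi]=0$ by hypothesis. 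The delicate point is that $\ker(P)$ is closed under the Schouten bracket. I would establish this via the characterization
\begin{equation*}
X\in\ker(P)\ \Longleftrightarrow\ X(f_1,\dots,f_k)|_C=0\ \text{for all }f_1,\dots,f_k\in I,
\end{equation*}
where $I=\{f\in C^{\infty}(\nu C):f|_C=0\}$ is the vanishing ideal of $C$. Since the Schouten bracket $[X,Y]$ acts on functions by iterated applications of $X$ and $Y$, each of which sends $I$ into itself when $X,Y\in\ker(P)$, the subspace $\ker(P)$ is stable under the bracket.

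Next, I would apply Thm.~\ref{voronovderived} to obtain the multibrackets $\{a_1,\dots,a_n\}=P[\dots[\pi,a_1],\dots,a_n]$ and curvature $\{\emptyset\}=P\pi$; the MC equation then becomes exactly \eqref{eq:mccoiso}, a priori an infinite sum. Truncation at $n=|\pi|_{pol}+2$ is a polynomial-degree count: a section $\Phi\in\Gamma(\nu C)$, viewed as a vertical fiberwise-constant vector field $f(x)\,\partial_p$, has polynomial degree $-1$; additivity of polynomial degree under the Schouten bracket gives $\bigl|[\dots[\pi,\Phi],\dots,\Phi]\bigr|_{pol}=|\pi|_{pol}-n$; and a bivector whose purely vertical fiberwise-constant component (at $C$) is nonzero must have polynomial degree at least $-2$. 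Hence for $n>|\pi|_{pol}+2$ the image under $P$ vanishes.

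Finally, for the geometric content, the crucial identity is
\begin{equation*}
\sum_{n\ge 0}\tfrac{1}{n!}[\dots[\pi,\underbrace{\Phi],\dots,\Phi}_{n\text{ times}}]\;=\;e^{[\,\cdot\,,\,\Phi]}\pi\;=\;(\Psi_{\Phi})_{*}\pi,
\end{equation*}
where $\Psi_{\Phi}$ denotes fiberwise translation by $\Phi$, i.e.\ the time-one flow on $\nu C$ of the vertical vector field $\Phi$; the second equality follows from $\mathcal{L}_\Phi=[\Phi,\cdot\,]$ and the graded antisymmetry of the Schouten bracket, which absorbs all the signs. Thus the MC equation $P\bigl((\Psi_{\Phi})_{*}\pi\bigr)=0$ says exactly that $C$ is coisotropic in $\bigl(\nu C,(\Psi_{\Phi})_{*}\pi\bigr)$, and pulling back by $\Psi_{\Phi}$ this is equivalent to $\Psi_{\Phi}^{-1}(C)=\text{graph}(-\Phi)$ being coisotropic in $(\nu C,\pi)$. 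The final claim of the lemma is the special case $\Phi=0$: the curvature $P\pi$ vanishes iff $\pi|_C$ has no purely vertical fiberwise-constant component, which in coordinates is the familiar condition $\pi^{\sharp}(TC^{\circ})\subset TC$ for coisotropy of $C$. The main obstacles are the Lie-subalgebra property of $\ker(P)$ and the sign-checked identification $e^{[\,\cdot\,,\,\Phi]}\pi=(\Psi_{\Phi})_{*}\pi$; once these are in hand, everything else is Voronov's machinery and a coordinate computation.
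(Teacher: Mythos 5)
Your proof is correct and follows essentially the same route as the paper's: the same polynomial-degree count (sections of $\nu C$ have polynomial degree $-1$, the Schouten bracket preserves polynomial degree, nonzero bivector fields have polynomial degree $\ge -2$) for the truncation at $n=|\pi|_{pol}+2$, the same identity $(\Psi_{\Phi})_{*}\pi=e^{[\cdot,\Phi]}\pi$ with $\Psi_\Phi$ the fiberwise translation to reduce coisotropicity of $graph(-\Phi)$ to the Maurer--Cartan equation, and the same characterization of coisotropicity of $C$ as $\pi\in\ker(P)$. The only divergence is that where the paper delegates the curved V-data axioms to Cattaneo--Felder and Sch\"atz's thesis, you verify directly that $\ker(P)$ is a Lie subalgebra via the vanishing ideal of $C$ -- a standard and adequate substitute for the citation.
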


\begin{proof} The fact that the above quadruple forms a curved V-data  is essentially the content of \cite[\S 2.6]{CaFeCo2}. For a more detailed proof we refer to 
\cite[Lemma 3.3 in \S 3.3]{FloDiss}, use that $\chi^{\bullet}_{fp}(\nu C)$ is a graded Lie subalgebra of $\chi^{\bullet}(\nu C)$, 
and use that $[\pi,\pi]=0$ by the definition of Poisson structure.

 {To prove eq. \eqref{eq:mccoiso}  we argue as follows.
Elements $a_i\in \ga_0=\Gamma(\nu C)[1]$, seen as vertical vector fields on $\nu C$, have {polynomial} degree $-1$ (in coordinates they read $f(x)\pd{p}$). Since  the Schouten bracket  preserves the polynomial degree, $[[\dots[\pi,a_1],\dots],a_{n}]$ has  polynomial degree  $|\pi|_{pol}-n$. Since the  polynomial degree of a  non-vanishing bivector field is $\ge -2$, we conclude that the above iterated brackets vanish for $n> |\pi|_{pol}+2$.}

 The equivalence\footnote{See  \cite[Ex. 3.2 in \S 4.3]{FloDiss} for an
example where $\pi$ is not fiberwise polynomial  and the correspondence fails.}
 between $\Phi\in \Gamma(\nu C)[1]$ being a Maurer-Cartan element  
  and $graph({-\Phi})$ being a coisotropic submanifold of $(\nu C, \pi)$   is proven as follows. 
Denote by $\psi \colon \nu C \to \nu C$ the time-1 flow of the vector field $\Phi$ (so $\psi$ is just translation by $\Phi$). In particular  $\psi(graph(-\Phi))=C$.
The pushforward bivector field by $\psi$ satisfies   $\psi_*(\pi)=e^{[\cdot,\Phi]}\pi$. Hence  
$graph({-\Phi})$ is coisotropic (w.r.t. $\pi$) if{f}
$C$ satisfies the  coisotropicity condition w.r.t.  $e^{[\cdot,\Phi]}\pi$, which is just
eq. \eqref{eq:mccoiso}.
To show that $\psi_*(\pi)=e^{[\cdot,\Phi]}\pi$, let 
$f,g$ be fiberwise polynomial functions on $\nu C$.
We have $\psi^*f=
e^{\Phi}f$ using the Taylor expansion of $f$ on each fiber. Hence 
\begin{align*}
&(\psi_*\pi)(f,g)=(\psi^{-1})^*(\pi(\psi^*f,\psi^*g))=e^{-{\Phi}}(\pi(e^{\Phi}f,e^{\Phi}g))=\\
&e^{[\cdot,{\Phi}]} [[\pi, e^{[{\Phi},\cdot]} f],e^{[{\Phi},\cdot]} g]=
[[e^{[\cdot,{\Phi}]} \pi, f], g]=(e^{[\cdot,{\Phi}]} \pi)( f, g).
\end{align*}
    
For the last statement, {use Thm. \ref{voronovderived}} and notice that $C$ is coisotropic if{f} we can write $\pi=\sum_j X_j\wedge Y_j$ with $X_j$ tangent to $C$, i.e. if{f} $\pi \in ker(P)$. 
\end{proof}

Hence we can apply  Thm. \ref{machine} (with $\Delta=\pi=0$ and $\Phi=0$):

\begin{cly}\label{cor:coiso} 
Let $C$ be a submanifold of a manifold, and consider a tubular neighborhood $\nu C$. For all $\tilde{\pi}\in \chi_{fp}^2(\nu C)$  
and 
 $\tilde{\Phi}\in \Gamma(\nu C)$:
\begin{align*}
&\begin{cases}
\tilde{\pi} \text{ is a Poisson structure   } \\
graph({-\tilde{\Phi}}) \text{ is a coisotropic submanifold of  } (\nu C, \tilde{\pi} )
\end{cases}\\
\Leftrightarrow
&
(\tilde{\pi}[2],\tilde{\Phi}[1]) \text{ is a MC element } \text{ of the \li[1]-algebra } \chi^{\bullet}_{fp}(\nu C)[2]\oplus \Gamma(\wedge \nu C)[1].
\end{align*} 
The above $\li[1]$-algebra structure is given by the  multibrackets
 (all other vanish)
\begin{align*}
d(X[1])&= PX,\\
\{X[1],Y[1]\}&=[X,Y][1](-1)^{|X|},\\
\{ X[1],a_1,\dots,a_n\}&=P[\dots[X,a_1],\dots,a_n]\;\;\;\;\;\; \text{for all }n\ge 1
\end{align*}
where  $X,Y\in \chi^{\bullet}_{fp}(\nu C)[1]$, $a_1,\dots,a_n \in \Gamma(\wedge \nu C)[1]$, and $[\cdot,\cdot]$ denotes the Schouten bracket on $\chi^{\bullet}_{fp} (\nu C)[1]$.
 \end{cly}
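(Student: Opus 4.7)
The plan is to view this corollary as the specialization of Theorem \ref{machine} obtained by taking $\pi=0$ in Lemma \ref{keycoiso}. That lemma, with $\pi=0$, gives the (non-curved) V-data
$$(L=\chi^\bullet_{fp}(\nu C)[1],\;\ga=\Gamma(\wedge \nu C)[1],\;P,\;\Delta=0),$$
since $0\in Ker(P)$ trivially, and the element $\Phi=0\in\ga_0$ is manifestly a Maurer-Cartan element of $\ga_0^P$ because with $\Delta=0$ all multibrackets of $\ga_0^P$ given by Theorem \ref{voronovderived} vanish. So the hypotheses of Theorem \ref{machine} are in force modulo checking filteredness.

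First I would address the filtration by equipping $L$ with the filtration by polynomial degree along the fibers of $\nu C$, setting $\cF^nL:=\{X\in L:|X|_{pol}\le -n\}$. Since the Schouten bracket preserves polynomial degree and elements of $\ga_0$, viewed as vertical fiberwise-constant vector fields, have polynomial degree $-1$, conditions (a)--(c) of Def.~\ref{triple} hold. Alternatively one can bypass convergence entirely by recycling the argument in the proof of Lemma~\ref{keycoiso}: any iterated bracket $[\dots[X,a_1],\dots,a_n]$ with $a_i\in\ga_0$ has polynomial degree $|X|_{pol}-n$, hence must vanish once this number drops below $-2$, and so all Maurer-Cartan sums appearing on both sides of the desired equivalence are finite.

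Next I would apply Theorem \ref{machine} with $\Delta=0$ and $\Phi=0$, identifying $\tilde\Delta=\tilde\pi\in L_1=\chi^2_{fp}(\nu C)$ and $\tilde\Phi\in\ga_0=\Gamma(\nu C)$. The theorem then reads: $(\tilde\pi[1],\tilde\Phi)$ is a Maurer-Cartan element of $(L[1]\oplus\ga)_0^P$ if and only if $[\tilde\pi,\tilde\pi]=0$ and $\tilde\Phi\in MC(\ga_{\tilde\pi}^P)$. The first condition is exactly that $\tilde\pi$ is Poisson, and by Lemma~\ref{keycoiso} the second is equivalent to $graph(-\tilde\Phi)$ being coisotropic in $(\nu C,\tilde\pi)$. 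Under the degree-shifting identification $L[1]=\chi^\bullet_{fp}(\nu C)[2]$, the element $(\tilde\pi[1],\tilde\Phi)$ coincides with the pair $(\tilde\pi[2],\tilde\Phi[1])$ as written in the statement.

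Finally, the explicit multibracket formulas follow by specializing Theorem \ref{voronov} to $\Delta=0$ (so that $D=[\Delta,\cdot]=0$) and $P_\Phi=P$: bracket \eqref{vorderlong} vanishes since $Da_1=0$, the differential \eqref{diffV2} reduces to $d(x[1],a)=(0,Px)$, and the binary bracket \eqref{crochet} together with the mixed brackets \eqref{vorder} retain their form. I expect the only mildly delicate point to be the filtration/polynomial-degree bookkeeping above; everything else is straightforward unwinding of Theorems \ref{voronov} and \ref{machine} together with Lemma \ref{keycoiso}.
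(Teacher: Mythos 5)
Your proposal is correct and follows essentially the same route as the paper, which simply invokes Thm.~\ref{machine} with $\Delta=\pi=0$ and $\Phi=0$ and reads off the multibrackets from Thm.~\ref{voronov}; the paper handles convergence exactly via your second (finiteness) argument, noting in a remark that the Maurer--Cartan sum has at most $|\tilde{\pi}|_{pol}+2$ terms. The only nitpick is that your explicit filtration $\cF^nL:=\{X:|X|_{pol}\le -n\}$ does not satisfy $\cF^{-1}L=L$ as required by the paper's definition, but this is immaterial since the finiteness argument suffices.
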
 
 
 \begin{rem}
1)  The formulas for the multibrackets in Cor. \ref{cor:coiso}  show that the Maurer-Cartan equation for $(\tilde{\pi}[2],\tilde{\Phi}[1])$ has at most $|\tilde{\pi}|_{pol}+2$ terms, by the same argument as in Lemma \ref{keycoiso}.

2) It is known  that the deformation problem of coisotropic submanifolds in Poisson (even symplectic) manifolds is formally obstructed \cite{OP}. Cor. \ref{cor:coiso}  is used in
\cite{OPPois} to show that the same applies to the simultaneous deformation problem of coisotropic submanifolds  and fiberwise entire Poisson structures.
\end{rem}
 
{
We now display the $L_{\infty}[1]$-algebra governing the deformations of
a Poisson structure $\pi$ and of 
a coisotropic submanifold $C$.}

\begin{cly} 
{
Let $(M,\pi)$ be a Poisson manifold, 
$C$  a coisotropic  submanifold. Identify a tubular neighborhood of $C$ in $M$ with the normal bundle $\nu C$ in such a way that $\pi$ is fiber-wise polynomial. 
There is an    $L_{\infty}[1]$-algebra structure on
     $\chi^{\bullet}_{fp}(\nu C)[2]\oplus \Gamma(\wedge \nu C)[1]$
   whose Maurer-Cartan elements are exactly pairs $(\tilde{\pi}[2],\tilde{\Phi}[1])$ where $\tilde{\pi}\in \chi_{fp}^2(\nu C)$  and 
 $\tilde{\Phi}\in \Gamma(\nu C)$ are such that $\pi+\tilde{\pi}$ is a Poisson structure and ${graph(-\Phi)}$ is a coisotropic submanifold w.r.t. $\pi+\tilde{\pi}$.
}     
     
{
     Its non-vanishing multi-brackets $\{\dots \}^\pi$ are given as follows:
\begin{align*}
d^\pi (X[1])&= (-[\pi,X][1],P(X)),\\
d^\pi(a)&= (0,P([\pi,a])),\\
\{X[1],Y[1]\}^\pi_2&=(-1)^{|X|}[X,Y][1],\\
\{a_1,\dots,a_n\}^\pi_n&=P([\dots[\pi,a_1],\dots,a_n])\;\;\;\;\;\; \text{for all }n\ge 1,\\
\{X[1],a_1,\dots,a_n\}^\pi_{n+1}&=P([\dots[X,a_1],\dots,a_n])\;\;\;\;\;\; \text{for all }n\ge 1,
\end{align*}
where  $X,Y\in \chi^{\bullet}_{fp}(\nu C)[1]$ and $a_1,\dots,a_n \in \Gamma(\wedge \nu C)[1]$.
}
\end{cly}

\begin{proof}
{
By eq. \eqref{eq:MCtw}, in order to obtain an $L_{\infty}[1]$-algebra whose    Maurer-Cartan elements are those specified in the statement of the present corollary, we can twist  
 the $L_{\infty}[1]$-algebra of Corollary \ref{cor:coiso} using the Maurer-Cartan element $(\pi[2],0)$. Notice that $(\pi[2],0)$ is really a Maurer-Cartan element, since $\pi$ is a Poisson structure and $C$ is coisotropic w.r.t. $\pi$.
} 
 
{ 
The multi-brackets $\{\dots \}^\pi$ are computed as in eq. \eqref{eq:twb}. Notice that the particular form of the multibrackts appearing in Corollary \ref{cor:coiso} forces
all terms on the right hand side of eq. \eqref{eq:twb} to be zero, except possibly for the first two. 
}
\end{proof}

\subsection{Dirac structures and Courant algebroids}\label{dirac}

In this subsection we consider a Courant algebroid structure on a fixed vector bundle and  a Dirac subbundle $A$. We study
 deformations of the Courant algebroid structure (with the constraint
 that 
 the symmetric pairing remains unchanged), and of 
 the  Dirac subbundle $A$. {Deformations of Dirac subbundles within a fixed Courant algebroid were studied by Liu, Weinstein, Xu \cite{LWX}
 and by Bursztyn, Crainic, {\v{S}}evera \cite{QPois}.}
 We will make use of  facts from \cite[\S 3]{QPois}  and Roytenberg's \cite[\S 3]{QLie}
 \cite[\S 3]{DimaThesis}\cite{Dima}. We refer  to \cite[\S 1.4]{FloDiss} or \cite{AlbFlRio} for some basic facts on graded geometry.

Recall that a \emph{Courant algebroid} consists of a vector bundle $E\to M$ with a non-degenerate symmetric pairing on the fibers, a bilinear operation $\oo \cdot,\cdot \cc$ on $\Gamma(E)$, and a bundle map $\rho \colon E\to TM$ satisfying compatibility conditions, see for instance \cite[Def. 4.2]{Dima}. An example is  $TM\oplus T^*M$ with the natural pairing, $\oo X+\xi ,Y+\eta \cc:=[X,Y]+\cL_X\eta-\iota_Y d\xi$, and $\rho(X+\xi)=X$ (this is sometimes called the  standard Courant algebroid). A \emph{Dirac structure} is a subbundle   $L\subset E$ such that $L$ equals its orthogonal w.r.t. the pairing, and so that $\Gamma(L)$ is closed under $\oo \cdot,\cdot \cc$, see  \cite{Cou}. Examples of Dirac structures for the standard Courant algebroid are provided by graphs of closed 2-forms and of Poisson bivector fields.

Fix a Courant algebroid $E\to M$, {maximal isotropic subbundles $A$ and $K$ (not necessarily  involutive) so that $E=A\oplus K$ as a vector bundle.}
Identify  $K\cong A^*$ via the pairing on the fibers of $E$.
Consider the maps $$\Gamma(\wedge^2 A^*)\to \Gamma(A)\;,\; \eta_1 \wedge \eta_2 \mapsto pr_{A}(\oo(0,\eta_1),(0,\eta_2)\cc,$$
$${\Gamma(\wedge^2 A)\to \Gamma(A^*)\;,\; a_1 \wedge a_2 \mapsto pr_{A^*}(\oo(a_1,0),(a_2,0)\cc,}$$
and view them as   elements $\psi\in \Gamma(\wedge^3 A)$ {and
$\varphi\in \Gamma(\wedge^3 A^*)$ respectively.}
 Denote by $d_A$ the 
degree $1$ derivation of $\Gamma(\wedge A^*)$ given by  
{the bracket
$[\cdot,\cdot]_A:= pr_{A}(\oo \cdot,\cdot \cc|_A)$ on $\Gamma(A)$ and the bundle map $\rho|_{A}\colon A\to TM$.} Similarly denote by $d_{A^*}$ the 
degree $1$ derivation
of $\Gamma(\wedge A)$ given by the  
bracket $[\eta_1,\eta_2]_{A^*}:=pr_{A^*}(\oo(0,\eta_1),(0,\eta_2)\cc$ on $\Gamma(A^*)$ and the bundle map $\rho|_{A^*}\colon A^*\to TM$. The data given $\psi$, {$\varphi$},  $(A,[\cdot,\cdot]_A,\rho|_A)$, and    $(A^*,[\cdot,\cdot]_{A^*},\rho|_{A^*})$ forms a {\emph{proto-bialgebroid}}. 
 From these data one can reconstruct the Courant algebroid structure on $E$:   the bilinear operation is recovered  as
\begin{align}\label{CB}
&\oo (a_1,\eta_1),(a_2,\eta_2) \cc=\\
&\left( [a_1,a_2]_A+\cL_{\eta_1}a_2-\iota_{\eta_2}d_{A^*}a_1+\psi(\eta_1,\eta_2,\cdot)\;,\;[\eta_1,\eta_2]_{A^*}+\cL_{a_1}\eta_2-\iota_{a_2}d_{A}\eta_1{+\varphi(a_1,a_2,\cdot)}\right)\nonumber
\end{align}
and the anchor as $\rho_A+\rho_{A^*} \colon A\oplus A^* \to TM$  
{(\cite[\S 3.8]{DimaThesis}, see also \cite[\S 3.2]{YvetteAllThat}).}

Recall that Courant algebroids are in bijective correspondence with 
 degree 2 symplectic graded manifolds $\cM$ together with a degree 3 function $\Delta\in C(\cM)$ satisfying $\op  \Delta,\Delta\cp =0$ \cite[Thm. 4.5]{Dima}.
 (Here $\op  \cdot,\cdot\cp $ denotes the degree $-2$ Poisson bracket on $C(\cM)$ induced by the symplectic structure).
  The Courant algebroid $E$ corresponds to 
  $$\left( \cM:=T^*[2]A[1]\;,\; \Delta={-\varphi}+h_{d_A}+F^*(h_{d_{A^*}})-\psi \right)$$
 with the canonical symplectic structure, by \cite[Thm. 3.8.2]{DimaThesis}. 
 Here we view $\psi\in \Gamma(\wedge^3 A)$ {and
$\varphi\in \Gamma(\wedge^3 A^*)$}  as  elements of $C_3(\cM)$. Further $h_{d_A}\in C_3(\cM)$ is the fiber-wise linear function   induced by $d_A$, the function  
 $h_{d_{A^*}}\in C_3(T^*[2]A^*[1])$ is defined similarly, and $F \colon T^*[2]A[1] \to T^*[2]A^*[1]$ is the canonical symplectomorphism known as \emph{Legendre transformation} \cite[\S 3.4]{DimaThesis}. We denote by $pr$ the cotangent projection $\cM \to A[1]$.

\begin{lem}\label{keydirac} Fix a Courant algebroid $E\to M$,
{and maximal isotropic subbundles $A$ and $K$   so that $E=A\oplus K$ as a vector bundle.} 
  The following quadruple forms a {curved} V-data:
\begin{itemize}
\item the graded Lie algebra $L:=C(\cM)[2]$ with Lie bracket\footnote{$\op  \cdot,\cdot\cp $, as a bracket on $L$, has degree zero. Hence $(L,\op  \cdot,\cdot\cp )$ is a graded Lie algebra.}
$\op  \cdot,\cdot\cp $
\item its abelian subalgebra $\ga:=pr^*(C(A[1]))[2]\cong \Gamma(\wedge A^*)[2]$ 
\item the natural projection $P \colon L \to \ga$ given by evaluation on the base $A[1]$
\item $\Delta={-\varphi}+h_{d_A}+F^*(h_{d_{A^*}})-\psi$,
\end{itemize}
hence by Thm. \ref{voronovderived} we obtain a {curved}  $L_{\infty}[1]$-structure  $\ga^P_{\Delta}$. 
For every $\Phi\in \Gamma(\wedge^2 A^*)$ we have: $\Phi[2]$ is a MC element of  $\ga^P_{\Delta} $ if{f} $$graph(-\Phi):=\{ (X-\iota_X \Phi):X\in A\}\subset A\oplus A^*=E$$  is a Dirac structure.

{Further, the above quadruple forms a   V-data if{f} $A$  is a Dirac structure of $E$.}
\end{lem}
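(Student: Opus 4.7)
The plan is to verify the four conditions of Definition \ref{vdata} and then translate the Maurer-Cartan equation into a geometric condition on the Lagrangian submanifold of $\cM$ obtained by translating $A[1]$.

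For the V-data, I would argue as follows. The space $L=C(\cM)[2]$ is a graded Lie algebra under $\op\cdot,\cdot\cp$ since the Poisson bracket on $C(\cM)$ has intrinsic degree $-2$, which the shift by $2$ turns into degree zero. Choose local coordinates $x^i$ on $M$, $\xi^a$ on the fibers of $A[1]$, and cotangent fibers $p_i,\theta_a$. The subspace $\ga=pr^*(C(A[1]))[2]$ then consists of functions of $(x,\xi)$ alone, whose mutual Poisson brackets vanish because the symplectic form pairs $x$ with $p$ and $\xi$ with $\theta$; hence $\ga$ is abelian. The map $P$, defined by setting $p=\theta=0$, is manifestly a projection onto $\ga$, and its kernel is the vanishing ideal of the Lagrangian submanifold $A[1]\subset\cM$, which is closed under $\op\cdot,\cdot\cp$ (vanishing ideals of coisotropic submanifolds are Poisson subalgebras). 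The identity $\op\Delta,\Delta\cp=0$ is the defining property of the cubic Hamiltonian encoding the Courant algebroid structure via Roytenberg's correspondence. To verify $\Delta\in\ker P$, observe that $h_{d_A}$ is linear in $(p,\theta)$ by construction of the Hamiltonian lift, $\psi$ is cubic in $\theta$, and $F^*(h_{d_{A^*}})$ is a sum of monomials containing at least one of $p,\theta$ since the Legendre transformation pulls the cotangent fiber coordinates of $T^*[2]A^*[1]$ back to linear combinations involving $p$ and $\xi$, the latter appearing multiplied by $\theta$ in $h_{d_{A^*}}$.

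For the Maurer-Cartan correspondence, Remark \ref{MCexp} reduces ``$\Phi[2]\in MC(\ga^P_\Delta)$'' to $P_\Phi\Delta=0$, i.e.\ to the condition that $e^{\op\cdot,\Phi\cp}\Delta$ vanishes when restricted to $A[1]$. Since $e^{\op\cdot,\Phi\cp}$ acts as pullback along the time-$1$ Hamiltonian flow $\phi$ of $\Phi$, this is equivalent to $\Delta$ vanishing on the Lagrangian $\phi^{-1}(A[1])\subset\cM$. Writing $\Phi=\tfrac{1}{2}\Phi_{ab}(x)\xi^a\xi^b$, a local computation shows that $\phi^{-1}$ leaves $x,\xi$ fixed and translates $\theta_a$ by $\Phi_{ab}\xi^b$ (the sign matching the convention $graph(-\Phi)$). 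Under Roytenberg's identification between Lagrangian subbundles of $E=A\oplus A^*$ and suitable Lagrangian submanifolds of $\cM$ \cite[\S 3]{DimaThesis}, this translated Lagrangian corresponds precisely to $graph(-\Phi)\subset E$. Finally, a Lagrangian subbundle is Dirac, i.e.\ involutive for the Courant bracket, if and only if $\Delta$ vanishes on the corresponding Lagrangian in $\cM$; this is the content of the derived-bracket description \eqref{CB} of the Courant bracket in terms of $\Delta$.

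The main obstacle is keeping all the conventions consistent: the several degree shifts between $\cM$, $L$, and $\ga$; the signs relating the Hamiltonian flow of $\Phi$ to $graph(-\Phi)$; and the precise statement of the Roytenberg correspondence between Lagrangian subbundles of $E$ and Lagrangian submanifolds of $\cM$. Once these are fixed, every verification reduces to a short coordinate computation or to an already-established fact from the graded symplectic geometry of Courant algebroids.
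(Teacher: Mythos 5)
Your verification of the V-data axioms coincides with the paper's argument (Lagrangian submanifolds of $\cM$ give the abelianity of $\ga$ and the subalgebra property of $\ker P$; self-commutation of $\Delta$ is Roytenberg's correspondence; $\Delta\in\ker P$ is read off in coordinates). For the Maurer--Cartan equivalence, however, you take a genuinely different route. The paper computes the Maurer--Cartan series of $\ga^P_\Delta$ term by term, finds that it truncates after three terms and equals $d_A\Phi-\tfrac{1}{2}[\Phi,\Phi]_{A^*}-\wedge^3\tilde\Phi(\psi)=0$, and then quotes \cite[Prop.~3.5]{QPois} for the equivalence of this explicit cubic equation with $graph(-\Phi)$ being Dirac. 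You instead use Remark \ref{MCexp} to rewrite the condition as $\Delta$ vanishing on the image of $A[1]$ under the Hamiltonian flow of $\Phi$, and then invoke the graded-geometric dictionary (maximal isotropic subbundles of $E$ correspond to conic Lagrangian submanifolds of $\cM$, and such a subbundle is Dirac iff $\Delta$ vanishes on the corresponding Lagrangian). Your route is more conceptual and avoids the coordinate computation of the individual brackets; the paper's route has the advantage of producing the explicit Maurer--Cartan equation, which is reused later (e.g.\ in \S\ref{sec:tpois} it becomes the twisted Poisson condition). Two points in your sketch need tightening. First, the flow of the Hamiltonian vector field of $\Phi=\tfrac12\Phi_{ab}(x)\xi^a\xi^b$ translates not only the coordinates $\theta_a$ conjugate to $\xi^a$ but also the coordinates $p_i$ conjugate to $x^i$ (by terms involving $\partial\Phi_{ab}/\partial x^i$); the image of $A[1]$ is the graph of the full differential of $\Phi$, not just its fiber part, so you must check that this larger Lagrangian still corresponds to $graph(-\Phi)\subset A\oplus A^*$ under the dictionary (it does, but the base component is not negligible a priori). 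Second, the criterion ``Lagrangian subbundle is Dirac iff $\Delta$ vanishes on the corresponding Lagrangian submanifold'' is not established in this paper and does not follow directly from eq.~\eqref{CB}, which is only the reconstruction of the bilinear operation from the Lie quasi-bialgebroid data; you should cite it from \cite{DimaThesis} or prove the easy direction via $\{\Delta,I\}\subset\{I,I\}\subset I$ for the vanishing ideal $I$ of the Lagrangian, together with the derived-bracket formula for $\oo\cdot,\cdot\cc$.
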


\begin{proof}
Since $\op  \cdot,\cdot\cp $ is the canonical Poisson bracket on the cotangent bundle, 
the cotangent fibers and the base $A[1]$ are Lagrangian submanifolds. Hence
$\ga$ is an abelian Lie subalgebra of $L$ and $ker(P)$, which consists of function on 
$T^*[2]A[1]$ vanishing on the base, is a Lie subalgebra. We have $\op  \Delta,\Delta\cp =0$ since $\Delta$ induces a Courant algebroid structure on  $A\oplus A^*$.  
Hence the the above quadruple is a {curved} V-data, and by  Thm.  \ref{voronovderived}   we obtain a {curved} $L_{\infty}[1]$-algebra structure $\ga^P_{\Delta}$.

We compute the Maurer-Cartan equation of $\ga^P_{\Delta}$. Let $\Phi \in \ga_0=\Gamma(\wedge^2 A^*)[2]$. {We have $\{\emptyset\}=P\Delta=-\varphi$. Notice that $-\varphi$ does not appear in the remaining terms of the Maurer-Cartan equation, since $\{-\varphi,\Phi\}=0$, for both entries belong to  the abelian subalgebra $\ga$.}
From
the  expression in coordinates 
for $F^*(h_{d_{A^*}})$ it follows that 
$\op  F^*(h_{d_{A^*}}),\Phi\cp $ and $\op  -\psi,\Phi\cp $ vanish  on the base $A[1]$.
So 
\begin{equation*}
P\op  \Delta,\Phi\cp =\op  h_{d_A},\Phi\cp =
{d_A}\Phi \in \Gamma(\wedge^3 A^*)
\end{equation*} 
where we used \cite[Lemma 3.3.1 1)]{DimaThesis}. 
Further
$\op  \op  h_{d_A},\Phi\cp ,\Phi\cp =0$ since both $\op  h_{d_A},\Phi\cp $ and $\Phi$ lie in the abelian Lie subalgebra $pr^*(C(A[1]))$, and in coordinates it is clear that $\op  \op  -\psi,\Phi\cp ,\Phi\cp $ vanishes on the base $A[1]$. So
$$P\op  \op  \Delta,\Phi\cp ,\Phi\cp =\op  \op  F^*(h_{d_{A^*}}),\Phi\cp ,\Phi\cp =-[\Phi,\Phi]_{A^*}$$ where we used  \cite[Lemma 3.6.2]{DimaThesis}.
Further, 
$$P\op  \op  \op  \Delta,\Phi\cp ,\Phi\cp ,\Phi\cp =\op  \op  \op  -\psi,\Phi\cp ,\Phi\cp ,\Phi\cp =
(\Phi^{\sharp}\wedge \Phi^{\sharp} \wedge \Phi^{\sharp})\psi
\in \Gamma(\wedge^3 A^*),$$
where $\Phi^{\sharp}  \colon A \to A^*, v \mapsto \iota_v\Phi$ is the contraction in the first component {(see \cite[\S 4.1.2]{YvetteAllThat})}.
All the other terms of the Maurer-Cartan equation   vanish. 
Hence we conclude that the Maurer-Cartan equation is
\begin{equation}\label{eq:MC4}
{-\varphi}+{d_A}\Phi-\frac{1}{2}[\Phi,\Phi]_{A^*}{+}\wedge^3\tilde{\Phi}(\psi)=0
\end{equation}
where $\wedge^3\tilde{\Phi}$ is defined as in \S \ref{sec:tpois}.

{We show that $\Phi$ satisfies eq. \eqref{eq:MC4} if{f}  $graph(-\Phi)$ is a Dirac structure, using the results of 
 \cite[\S 4]{QLie}. There Roytenberg  considers the time one flow $F_{\Phi}$ of the hamiltonian vector field of $\Phi$ on $\cM$. As $F_{\Phi}$ is a symplectomorphism of $\cM$, it corresponds to an isomorphism of Courant algebroids between $E'$ and $E$, where $E'$ is the Courant algebroid\footnote{Both Courant algebroids have the same underlying vector bundle and same symmetric pairing.} corresponding to the degree 3 function $F_{\Phi}^*(\Delta)$ on $\cM$.
The pullback function $F_{\Phi}^*(\Delta)$ splits in a sum according to 
bi-degree, and the component
lying in $\Gamma(\wedge^3 A^*)$ is
\begin{equation}\label{eq:comp}
 -\varphi-{d_A}\Phi-\frac{1}{2}[\Phi,\Phi]_{A^*}-\wedge^3\tilde{\Phi}(\psi),
\end{equation}
see  \cite[Eq. (4.2)]{QLie}.
$A$ is a Dirac structure in $E'$ if its image under the isomorphism $E'\cong E$, which is $graph(\Phi)$, is a Dirac structure in $E$.
On the other hand, as one sees easily from \cite[Thm. 3.8.2]{DimaThesis}, $A$ is a Dirac structure in $E'$ if{f} the component of $F_{\Phi}^*(\Delta)$  lying in $\Gamma(\wedge^3 A^*)$ (which is given by \eqref{eq:comp}) vanishes, or equivalently if   $-\Phi$ satisfies the Maurer-Cartan equation \eqref{eq:MC4}. Putting together these two statements proves the claim.}

{Finally, notice that
$\Delta\in ker(P)$ if{f} its component in the bi-degree corresponding to $\Gamma(\wedge^3 A^*)$, which is $-\varphi$, vanishes.
Using   again \cite[Thm. 3.8.2]{DimaThesis} 
we see that the  quadruple $(L,\ga,P,\Delta)$ forms a   V-data if{f} $A$  is a Dirac structure of $E$.
}
 \end{proof}

\begin{cly}\label{cor:dirac}
Fix a Courant algebroid $E\to M$,
a Dirac structure $A$, and a complementary isotropic subbundle $K$.  
Let $(L,\ga,P,\Delta)$ as in Lemma \ref{keydirac}
For all $\tilde{\Delta}\in C(\cM)_3$ 
and 
 $\tilde{\Phi}\in \Gamma(\wedge^2A^*)$:
\begin{align*}
&\begin{cases}
    \Delta+\tilde{\Delta} \text{ defines a new Courant algebroid }\\
    \text {structure on the vector bundle }E,\\ 
graph(-\tilde{\Phi}) \text{ is a Dirac structure  there } 
\end{cases}
\Leftrightarrow 
&
(\tilde{\Delta}[3],\tilde{\Phi}[2]) \in MC\big(({L}[1]\oplus \ga)_\Delta^P\big).
\end{align*}
  \end{cly}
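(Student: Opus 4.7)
The plan is to apply Thm.~\ref{machine} directly to the V-data $(L,\ga,P,\Delta)$ of Lemma~\ref{keydirac}, with Maurer--Cartan element $\Phi=0$ (so that $P_\Phi=P$). In the conventions at hand, $L=C(\cM)[2]$ with $L_1=C(\cM)_3$ and $\ga_0\cong \Gamma(\wedge^2A^*)$, so $\tilde{\Delta}[3]\in L[1]_0$ and $\tilde{\Phi}[2]\in \ga_0$ are degree-zero elements, as needed. Thm.~\ref{machine} will then give
\begin{align*}
\begin{cases}
\op \Delta+\tilde{\Delta},\Delta+\tilde{\Delta}\cp =0 \\
\tilde{\Phi}[2]\in MC(\ga_{\Delta+\tilde{\Delta}}^{P})
\end{cases}
\Leftrightarrow
(\tilde{\Delta}[3],\tilde{\Phi}[2]) \in MC\big((L[1]\oplus \ga)_\Delta^P\big),
\end{align*}
and it then remains to match both sides of this equivalence with the geometric conditions in the corollary.

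For the left-hand side, the crucial observation is that the hypothesis $\tilde{\Delta}[2]\in \ker(P)$ means $\Delta+\tilde{\Delta}$ vanishes on the base $A[1]$; hence Remark~\ref{rem:kerDir} identifies the self-commuting condition $\op \Delta+\tilde{\Delta},\Delta+\tilde{\Delta}\cp =0$ with the statement that $\Delta+\tilde{\Delta}$ defines a new Courant algebroid structure on $E$ (with unchanged symmetric pairing) for which $A$ is still Dirac. Given this, $K$ is still a complementary isotropic subbundle, so Lemma~\ref{keydirac} applies verbatim to the deformed structure and translates $\tilde{\Phi}[2]\in MC(\ga_{\Delta+\tilde{\Delta}}^{P})$ into the statement that $graph(-\tilde{\Phi})$ is Dirac for the new Courant algebroid.

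For the right-hand side, I would use Remark~\ref{kerPa} to view $(\ker(P)[1]\oplus\ga)_\Delta^P$ as a sub-$\li[1]$-algebra of $(L[1]\oplus\ga)_\Delta^P$; this is legitimate since $\ker(P)$ is a graded Lie subalgebra of $L$ and is preserved by $D=[\Delta,\cdot]$ (because $\Delta\in\ker(P)$). The hypothesis $\tilde{\Delta}[2]\in\ker(P)$ ensures that $(\tilde{\Delta}[3],\tilde{\Phi}[2])$ actually lies in $\ker(P)[1]\oplus\ga$, so membership in the two Maurer--Cartan sets coincides. The main technical point I expect is verifying that Thm.~\ref{machine} applies, i.e.\ that this V-data is filtered in the sense of Def.~\ref{triple}: the natural choice is to filter $L$ by polynomial degree along the Lagrangian fibration $\cM=T^*[2]A[1]\to A[1]$, and this filtration is in fact so strong that all iterated Poisson brackets appearing in the relevant Maurer--Cartan equations truncate after finitely many steps (as already observed in the proof of Lemma~\ref{keydirac}), so no genuine convergence issue arises.
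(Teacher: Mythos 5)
Your proposal is correct and follows essentially the same route as the paper: apply Thm.~\ref{machine} with $\Phi=0$, use Remark~\ref{rem:kerDir} to interpret the self-commuting condition, Lemma~\ref{keydirac} for the deformed Courant algebroid, and Remark~\ref{kerPa} to restrict to the subalgebra $\ker(P)[1]\oplus\ga$. Your filtration by polynomial degree along the fibration $\cM\to A[1]$ is exactly the one the paper records in Remark~\ref{rem:convDirac}.
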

\begin{proof}
{$(L,\ga,P,\Delta)$ is a V-data by the last statement of Lemma \ref{keydirac}, hence we can apply
 Thm. \ref{machine} with $\Phi=0$. Use again Lemma \ref{keydirac} 
 to phrase the conclusions of Thm.  \ref{machine} in terms of Courant algebroids and Dirac structures.}
\end{proof}

\begin{rem}\label{rem:convDirac} 
 We check that {the V-data $(L,\ga, P,\Delta)$ is filtered
(Def. \ref{triple}).} $T^*[2]A^*[1]$ is a vector bundle over $A^*[1]$, so we can denote by $C^k(T^*[2]A^*[1])$ the functions which are polynomials of degree $k$ on each fiber. Using the Legendre transformation $F$ to identify $\cM=T^*[2]A[1]$ with $T^*[2]A^*[1]$ we obtain a direct product decomposition $L=\prod_{k \ge -1}L^k$ where
$L^k:=C^{k+1}(T^*[2]A^*[1])$. Notice that an element of $pr^*(C_{k+1}(A[1]))[2]\cong \Gamma(\wedge^{k+1} A^*)[2]$ lies in $L^k$.  
By Remark \ref{rem:dirprod}, $\cF^nL:=\prod_{k\ge n}L^k$ is   a complete filtration of the vector space $L$. The remaining items of Def. \ref{triple} are easily checked.
 \end{rem}

\subsection{Twisted Poisson structures}\label{sec:tpois}

In this subsection we present a special case of the situation studied in \S \ref{dirac}. We
apply  Cor. \ref{cor:dirac} to the standard Courant algebroid over a manifold $M$  and $A=T^*M$. We obtain a $\li[1]$-algebra whose  Maurer-Cartan elements consist of closed 3-forms and twisted Poisson structures \cite{SW}, recovering the $\li[1]$-algebra recently displayed by Getzler \cite{GetRio}, and study their equivalences. {Further, given a closed 3-form $H$ and an $H$-twisted Poisson structure, we describe the $\li[1]$-algebra governing the deformations of the pair $(H,\pi)$.}
Twisted Poisson structures appeared in relation to deformations also in \cite[\S 3]{Park}.

We will need the following notation: 
for $\pi \in \wedge^{a}TM$ and $a\ge 1$ we define
$$\pi^{\sharp} \colon T^*M\to \wedge^{a-1}TM\;\;,\;\; \xi \to \iota_\xi \pi,$$
and we define $\pi^{\sharp}\equiv 0$ if $a=0$.
We also need an extension of the above to several multivectors: for 
$\pi_1 \in \wedge^{a_1}TM,\dots,\pi_n \in \wedge^{a_n}TM$ ($n\ge 1, a_i\ge 1$), we define
\begin{align*}
\pi_1^{\sharp}\wedge \dots \wedge \pi_n^{\sharp}\;\; \colon \;\;\;\wedge^nT^*M &\to \wedge^{a_1+\cdots+a_n-n}TM,\\
\xi_1\wedge\dots\wedge\xi_n &\mapsto \sum_{\sigma\in S_n} (-1)^{\sigma}\pi_1^{\sharp}(\xi_{\sigma(1)})\wedge\dots\wedge \pi_n^{\sharp}(\xi_{\sigma(n)})\end{align*}
where   $\xi_i\in T^*M$ and $(-1)^{\sigma}$ is the sign of the permutation $\sigma$.

Recall that, given a bivector field $\pi$ and a closed 3-form $H$, one says that  $\pi$ is a \emph{$H$-twisted Poisson structure} \cite[eq. (1)]{SW} if{f} $$[\pi,\pi]=2 \wedge^3\tilde{\pi} (H),$$   where $\wedge^3\tilde{\pi}=\frac{1}{6}(\pi^{\sharp}\wedge \pi^{\sharp}\wedge \pi^{\sharp})$ and $[\cdot,\cdot]$ is the Schouten bracket of multivectorfields.

\begin{cly}\label{cor:tpois}
Let $M$ be a manifold. There is an $\li[1]$-algebra structure on $$\mathfrak{L}:=\Omega^{\bullet\ge 1} (M)[3]\oplus \chi^{\bullet}(M)[2]$$ whose only non-vanishing multibrackets are
\begin{itemize}
\item[a)] minus the   de Rham differential  on differential forms,
  \item[b)] $\{\pi_1,\pi_2\}=[\pi_1,\pi_2] (-1)^{a_1+1}$, where  $\pi_i \in \chi^{a_i}(M)$,\item[c)] $\{H,\pi_1,\dots,\pi_n\}\;=\;(-1)^{\sum_{i=1}^n a_i(n-i)}(\pi_1^{\sharp}\wedge \dots \wedge \pi_n^{\sharp})H$ \\for all $n\ge1$, where $H\in\Omega^n(M)$ and $\pi_1 \in \chi^{a_1}(M),\dots,\pi_n \in \chi^{a_n}(M)$ with all $a_i\ge1$.
\end{itemize}
Its Maurer-Cartan elements are exactly pairs $(H[3],\pi[2])$ where $H\in \Omega^3(M)$ and $\pi \in \chi^2(M)$ are such that $dH=0$ and $\pi$ is a $H$-twisted Poisson structure.  \end{cly}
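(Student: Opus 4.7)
The plan is to specialize Cor.~\ref{cor:dirac} to the standard Courant algebroid $E = TM\oplus T^*M$ with Dirac structure $A = T^*M$ and complementary isotropic $K = TM$. For this choice the associated Lie quasi-bialgebroid has $d_A = 0$ (since $T^*M$ carries the trivial Lie algebroid structure), $d_{A^*} = d$ the de Rham differential (since $A^* = TM$ is the standard tangent Lie algebroid), and $\psi = 0$; thus $\cM = T^*[2]T^*M[1]$, $\Delta = F^*(h_d)$, and $\ga = \Gamma(\wedge A^*)[2] = \chi^\bullet(M)[2]$. This V-data is filtered in the sense of Def.~\ref{triple} by Remark~\ref{rem:convDirac}, so Cor.~\ref{cor:dirac} delivers the $\li[1]$-algebra $(\ker(P)[1]\oplus\ga)_\Delta^P$ on which everything else will be built.

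The next step is to identify $\Omega^{\bullet\ge 1}(M)$ with a graded Lie subalgebra of $\ker(P)$ which is preserved by $D = \op\Delta,\cdot\cp$, and then to invoke Remark~\ref{kerPa}. In local Darboux coordinates $(x^i,\theta^i,\eta_i,p_i)$ on $\cM$, the $n$-form $H = \tfrac{1}{n!}H_{i_1\cdots i_n}\,dx^{i_1}\wedge\cdots\wedge dx^{i_n}$ pulls back via the Legendre symplectomorphism $F\colon\cM\to T^*[2]A^*[1]$ to $\tilde H = \tfrac{1}{n!}H_{i_1\cdots i_n}\eta_{i_1}\cdots\eta_{i_n}$; for $n\ge 1$ this function vanishes on the base $A[1]$, so it lies in $\ker(P)$. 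Any two such pullbacks Poisson-commute since their Hamiltonian vector fields involve only derivatives with respect to $\theta$ and $p$, which annihilate functions of $(x,\eta)$; and $D$ preserves the subspace, acting on $\tilde H$ as the de~Rham differential via the Roytenberg identity $\op h_{d_{A^*}},f\cp = d_{A^*}f$ combined with the fact that $F$ is a symplectomorphism. Hence $\mathfrak L := \Omega^{\bullet\ge 1}(M)[3]\oplus\ga$ sits as an $\li[1]$-subalgebra of $(\ker(P)[1]\oplus\ga)_\Delta^P$.

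I would then read off the multibrackets from Thm.~\ref{voronov}. The unary bracket $d(x[1]) = -(Dx)[1]$ specializes to minus the de Rham differential on forms, giving~(a). The binary bracket on $\ga$ is $P\op\op\Delta,\pi_1\cp,\pi_2\cp$, which by the same computation recalled in the proof of Lemma~\ref{keydirac} (applied now to multivectors rather than $2$-forms) reproduces the Schouten bracket; the sign $(-1)^{a_1+1}$ in~(b) records the d\'ecalage implicit in the $[2]$-shift, consistent with \eqref{crochet}. The higher brackets $\{H,\pi_1,\dots,\pi_n\} = P\op\cdots\op\tilde H,\pi_1\cp,\dots,\pi_n\cp$ are the crux of the proof: writing $\pi_i = \tfrac{1}{a_i!}\pi_i^{j_1\cdots j_{a_i}}\theta_{j_1}\cdots\theta_{j_{a_i}}$ and iterating, each Poisson bracket contracts one $\eta$-index of $\tilde H$ against one $\theta$-index of some $\pi_i$ via the canonical pairing $\op\eta_j,\theta^k\cp = \pm\delta^k_j$. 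After $n$ such contractions the projection $P$ returns a nonzero multivector exactly when $H$ is an $n$-form, and a combinatorial repackaging of the resulting antisymmetrized sum yields $(\pi_1^\sharp\wedge\cdots\wedge\pi_n^\sharp)H$ with the sign advertised in~(c).

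Finally, to extract the Maurer-Cartan equation for a degree-$0$ element $\alpha = (H[3],\pi[2])$ with $H \in \Omega^3(M)$ and $\pi \in \chi^2(M)$: since the brackets of Thm.~\ref{voronov} accept at most one argument from $L[1]$, the $\Omega^4$-component of $\sum_n \tfrac{1}{n!}\{\alpha,\dots,\alpha\}_n$ collapses to $-dH$, while the $\chi^3$-component collapses to $\tfrac{1}{2}\{\pi,\pi\} + \tfrac{1}{3!}\{H,\pi,\pi,\pi\} = -\tfrac{1}{2}[\pi,\pi] + \tfrac{1}{6}(\pi^\sharp\wedge\pi^\sharp\wedge\pi^\sharp)H$, which vanishes precisely when $dH = 0$ and $[\pi,\pi] = 2\wedge^3\tilde\pi(H)$. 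The principal obstacle is the systematic sign bookkeeping in the iterated Poisson brackets and the d\'ecalage isomorphism, but this is entirely mechanical once Roytenberg's conventions for $F$ and $h_d$ are fixed.
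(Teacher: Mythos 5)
Your proposal is correct and follows essentially the same route as the paper: specialize Lemma \ref{keydirac}/Cor.~\ref{cor:dirac} to the standard Courant algebroid with $A=T^*M$, embed $\Omega^{\bullet\ge1}(M)[2]$ into $\ker(P)$ via the Legendre transformation, invoke Remark \ref{kerPa} to get the $\li[1]$-subalgebra, and compute the multibrackets in coordinates from Thm.~\ref{voronov}. The only (harmless) divergence is the last step: the paper identifies the Maurer--Cartan elements geometrically, via the correspondence between self-commuting degree-$3$ functions and twisted Courant brackets and between Dirac graphs and twisted Poisson bivectors, whereas you expand the Maurer--Cartan equation directly from the multibrackets a)--c); both yield $dH=0$ and $[\pi,\pi]=2\wedge^3\tilde{\pi}(H)$.
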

\begin{rem}
The graded vector space $\mathfrak{L}= \Omega^{\bullet\ge 1} (M)[3]\oplus \chi^{\bullet}(M)[2]$ is concentrated in degrees $\{-2,\dots,dim(M)-2\}$, and its degree $i$ component is $\Omega^{i+3}(M)\oplus \chi^{i+2}(M)$.
\end{rem}

\begin{proof}
We apply  Cor. \ref{cor:dirac} to the standard Courant algebroid $TM\oplus T^*M$ (defined at the beginning of \S \ref{dirac}), to $A=T^*M$ and $K=TM$. Notice that it corresponds to the Lie bialgebroid $(A,K)$, where $A$ has the zero structure and $K=TM$ has its canonical Lie algebroid structure.

 We use the following notation for the canonical local coordinates on $\cM:=T^*[2]T^*[1]M$: we denote by 
$x_j$ arbitrary local coordinates on $M$, by $p_j$ the canonical coordinates on the fibers of $T^*[1]M$ (so the degrees are 
$|x_j|=0, |p_j|=1$, for $j=1,\dots,dim(M)$). 
By $P_j,v_j$ we denote the conjugate coordinates on the fibres of $\cM\to T^*[1]M$, with degrees   $|P_j|=2, |v_j|=1$. One has $\op P_j,x_k\cp=\delta_{jk}$ and $\op p_j,v_k\cp=\delta_{jk}$. The element of $C_3(\cM)$ corresponding to the standard Courant algebroid is $\cS:=\sum_i P_iv_i$.

The quadruple appearing in Lemma \ref{keydirac} reads
\begin{itemize}
\item   $L:=C(T^*[2]T^*[1]M)[2]$, whose Lie bracket we denote by  $\op\cdot,\cdot\cp$
\item   $\ga:= C(T^*[1]M))[2]\cong \chi^{\bullet}(M)[2]$ 
\item the natural projection $P \colon L \to \ga$ given by evaluation on the base $T^*[1]M$, i.e. setting $P_j=0,v_j=0$ for all $j$
\item $\Delta=\sum_i P_iv_i$.
\end{itemize}
The multibrackets of the $\li[1]$-algebra $(L[1]\oplus \ga)^P_\Delta$ are given in Thm. \ref{voronov}.
Notice that using the Legendre transformation $F$ we have $$\Omega(M)[2]=C(T[1]M)[2]\subset C(T^*[2]T[1]M)[2] {\cong}L,$$ and  
$\Omega^{\bullet\ge 1} (M)[2]\subset ker(P)$ is a Lie subalgebra preserved by $\op\Delta,\cdot\cp$. So by Remark \ref{kerPa} it follows that  $\mathfrak{L}=\Omega^{\bullet\ge 1} (M)[3]\oplus \chi^{\bullet}(M)[2]$ is a $\li[1]$-subalgebra of $(L[1]\oplus \ga)^P_\Delta$. We justify why the restriction of the  multibrackets  to $\mathfrak{L}$ is the one described in the statement of this corollary.   a)   follows from eq. \eqref{diffV2} and   $$\op \sum_i P_iv_i,
F(x)v_{\epsilon(1)}\dots v_{\epsilon(k)}\cp=\sum_i\frac{\partial F}{\partial x_i}v_i v_{\epsilon(1)}\dots v_{\epsilon(k)},$$ where $\epsilon(i)=1,\dots,dim(M)$.    b)  follows from  eq.  \eqref{vorderlong} and  \cite[Lemma 3.6.2]{DimaThesis}.
 c)   follow from eq. \eqref{vorder} and a lengthy but straightforward computation in coordinates.

For the statement on Maurer-Cartan elements we proceed as follows. Given $H\in \Omega^3(M)$, the degree $3$ function $\sum_i P_iv_i+H$ on $\cM$ defines a Courant algebroid structure (i.e., is self-commuting) if{f} $H$ is closed, and in this case it
induces the $(-H)$-twisted\footnote{Recall that the
  $K$-twisted Courant algebroid is $TM\oplus T^*M$  with bilinear operation  $\oo X+\xi, Y+\eta \cc_K:=[X,Y]+\cL_X\eta-\iota_Y d\xi+\iota_Y\iota_X K$.} Courant algebroid $(TM\oplus T^*M)_{-H}$  \cite[\S4]{Dima}\cite[\S 8]{HDirac}. 
Hence, by Cor. \ref{cor:dirac}, $(H[3],\pi[2])$ is a Maurer-Cartan element of  $\mathfrak{L}$ if{f} $H$ is closed and $graph(-\pi)$ is a Dirac structure in $(TM\oplus T^*M)_{-H}$. The latter condition is equivalent to 
$-\pi$ being  a $(-H)$-twisted Poisson structure  \cite[\S 3]{SW}, that is, to $\pi$ being a $H$-twisted Poisson structure. 
\end{proof}

{
Given a closed 3-form $H$ and an $H$-twisted Poisson structure $\pi$,
we now describe the $L_{\infty}[1]$-algebra governing deformations of the pair $(H,\pi)$ to   pairs consisting of a closed 3-form and a correspondingly twisted Poisson structure. 
}

\begin{cly}\label{cor:tpoisany}
{
Let $M$ be a manifold,
$H$ a closed 3-form  and $\pi$ an $H$-twisted Poisson structure.
There is an $\li[1]$-algebra structure on $$\Omega^{\bullet\ge 1} (M)[3]\oplus \chi^{\bullet}(M)[2]$$
whose Maurer-Cartan elements are exactly pairs $(\widetilde{H}[3],\widetilde{\pi}[2])$, where $\widetilde{H}\in \Omega^3(M)$ and $\widetilde{\pi} \in \chi^2(M)$ are such that $\widetilde{H}$ is closed and $\pi+\widetilde{\pi}$ is an $(H+\widetilde{H})$-twisted Poisson structure.
}

{
Its $n$-th multibracket is 
$\{\dots\}_n+\{\dots\}_n'$, where $\{\dots\}_n$ denotes the $n$-th multibracket of the $\li[1]$-algebra   
described in Cor. \ref{cor:tpois} and $\{\dots\}_n'$ is defined as follows:
}

\begin{itemize}

\item[i)] 
{
for $n=1$:
$$\{(\widetilde{H}_1,\widetilde{\pi}_1)\}'_1=-[\pi,\widetilde{\pi}_1]
+\frac{1}{2}\{H,\pi,\pi,\widetilde{\pi}_1\}_4+
\frac{1}{\widetilde{n}_1!}\{\widetilde{H}_1,
\pi,\dots,\pi
\}_{\widetilde{n}_1+1}$$
}

  \item[ii)] 
{  
  for $n=2$:
   \begin{align*}
\{(\widetilde{H}_1,\widetilde{\pi}_1),(\widetilde{H}_2,\widetilde{\pi}_2)\}'_2=\{H,\pi,\widetilde{\pi}_1,\widetilde{\pi}_2\}_4
+ \delta_{\widetilde{n}_1,\ge 2}
&\{\widetilde{H}_1,
\pi,\dots,\pi,
\widetilde{\pi}_2\}_{\widetilde{n}_1+1}\frac{1}{(\widetilde{n}_1-1)!}\\
+ \delta_{\widetilde{n}_2,\ge 2}
&\{\widetilde{H}_2,
\pi,\dots,\pi,
\widetilde{\pi}_1\}_{\widetilde{n}_2+1}\frac{(-1)^{(\widetilde{n}_2-1)\widetilde{a}_1}}
{(\widetilde{n}_2-1)!}
\end{align*} 
}

\item[iii)] 
{
for $n=3$:
\begin{align*}
\{(\widetilde{H}_1,\widetilde{\pi}_1),(\widetilde{H}_2,\widetilde{\pi}_2),(\widetilde{H}_3,\widetilde{\pi}_3)\}'_3=
\{H,\widetilde{\pi}_1,\widetilde{\pi}_2,\widetilde{\pi}_3\}_4
+ 
\delta_{\widetilde{n}_1,\ge 3}
&\{\widetilde{H}_1,
\pi,\dots,\pi,
\widetilde{\pi}_2,\widetilde{\pi}_3\}_{\widetilde{n}_1+1}\frac{1}{(\widetilde{n}_1-2)!}\\
+ \delta_{\widetilde{n}_2,\ge 3}
&\{\widetilde{H}_2,\pi,\dots,\pi,
\widetilde{\pi}_1,\widetilde{\pi}_3\}_{\widetilde{n}_2+1}\frac{(-1)^{(\widetilde{n}_2-1)\widetilde{a}_1}}
{(\widetilde{n}_2-2)!}\\
+ \delta_{\widetilde{n}_3,\ge 3}
&\{\widetilde{H}_3,\pi,\dots,\pi,
\widetilde{\pi}_1,\widetilde{\pi}_2\}_{\widetilde{n}_3+1}\frac{(-1)^{(\widetilde{n}_3-1)(\widetilde{a}_1+\widetilde{a}_2)}}
{(\widetilde{n}_3-2)!}
\end{align*}
}

\item [iv)] 
{
for $n\ge 4$:
$$\{(\widetilde{H}_1,\widetilde{\pi}_1),\dots,(\widetilde{H}_n,\widetilde{\pi}_n)\}'_n=
  \sum_{\substack{1\le i\le n \\ \widetilde{n}_i \ge n}}
\{\widetilde{H}_i,
\pi,\dots,\pi,
\widetilde{\pi}_1,\dots,\widehat{\widetilde{\pi}}_i,\dots,\widetilde{\pi}_n\}_{\widetilde{n}_i+1}
\frac{ (-1)^{(\widetilde{n}_i-1)
 (\widetilde{a}_1 +\dots+\widetilde{a}_{i-1})}}
{(\widetilde{n}_i-n+1)!}
$$
}
\end{itemize}
{
where  $(\widetilde{H}_i,\widetilde{\pi}_i)\in\Omega^{\widetilde{n}_i}(M)\oplus\chi^{\widetilde{a}_i}(M)$, $\delta$ is the Kronecker delta (so $\delta_{a,\ge b}=1$ if $a\ge b$ and zero otherwise), and $\widehat{\widetilde{\pi}}_i$ denotes omission of the element $\widetilde{\pi}_i$. 
}
 \end{cly}

{
\begin{proof}
By eq. \eqref{eq:MCtw}, the seeked $L_{\infty}[1]$-algebra is obtained twisting the $L_{\infty}[1]$-algebra
of Cor. \ref{cor:tpois} by the Maurer-Cartan element $(H[3],\pi[2])$.
The $n$-th twisted multibracket  is computed as in eq. \eqref{eq:twb}, and we write it as $\{\dots\}_n+\{\dots\}_n'$. Notice that of the three types of multibrackets defined in  Cor. \ref{cor:tpois}, type a) never appears  while computing $\{\dots\}_n'$, and type b) appears only when $n=1$. Notice further that type c) involves exactly one differential form and as many multivector fields as the degree of the form. (This explains for instance why $H$ does not appear in the  expression for $\{\dots\}_n'$ when $n\ge 4$.)
\end{proof}
}

\subsubsection{Equivalences of twisted Poisson structures}
 
Consider the $\li[1]$-algebra  $\mathfrak{L}$ of  Cor. \ref{cor:tpois}.  We make explicit the equivalence relation induced on its set of Maurer-Cartan elements.

  Fix $(B,X)\in \mathfrak{L}_{-1}=\Omega^2(M)\oplus \chi(M)$. 
It defines a vector field $\mathcal{Y}^{(B,X)}$ on $\mathfrak{L}_{0}=\Omega^3(M)\oplus \chi^2(M)$. By eq. \eqref{gaugeaction} and Cor. \ref{cor:tpois},
 at the point $(H,\pi)$ the vector field reads  \begin{align}\label{gaugetpois}
\mathcal{Y}^{(B,X)}|_{(H,\pi)}= \big(-dB\;,\; [X,\pi]+\wedge^2\tilde{\pi}(B-\iota_X H)\big) \end{align}
where $\wedge^2\tilde{\pi}:=\frac{1}{2}(\pi^{\sharp}\wedge \pi^{\sharp})$. 

 \begin{rem}
The binary bracket on $\mathfrak{L}_{-1}$ reduces to the Lie bracket of vector fields on $\chi(M)$, making $\mathfrak{L}_{-1}$ into a Lie algebra. The assignment $
(B,H)\mapsto  \mathcal{Y}^{(B,X)}$ is not a Lie algebra morphism. For instance, the bracket of $\mathcal{Y}^{(0,X)}$ and $\mathcal{Y}^{(0,\tilde{X})}$ differs from  
$\mathcal{Y}^{(0,[X,\tilde{X}])}$ for generic $X,\tilde{X}\in \chi(M)$, as one can check using the proof of Prop. \ref{cor:Gu} below.
\end{rem}

For any diffeomorphism $\phi$ of $M$, we consider the vector bundle automorphism $$TM\oplus T^*M,\; Y+\eta\mapsto \phi_*Y+(\phi^{-1})^*\eta,$$ which by abuse of notation we denote by $\phi_*$. For any $B\in \Omega^2(M)$, we consider $$e^B \colon 
TM\oplus T^*M,\; Y+\eta\mapsto Y+(\eta+ \iota_Y B).$$ Recall that the vector bundle $TM\oplus T^*M$ is endowed with a canonical pairing on the fibers given by $\langle  X_1+\xi_1, X_2+\xi_2 \rangle =\frac{1}{2}(\iota_{X_1}\xi_2+\iota_{X_2}\xi_1)$.
\begin{rem}\label{ebphi}
The group of vector bundle automorphisms of $TM\oplus T^*M$ preserving the canonical pairing and preserving\footnote{In the sense that the projection $TM\oplus T^*M \to TM$ is equivariant w.r.t. the vector bundle automorphism and the derivative of its base map.}
  the canonical projection $TM\oplus T^*M \to TM$ is given exactly by $\{\phi_* e^B: \phi\in \text{Diff}(M), B\in \Omega^2(M)\}$. This follows by the same argument as for \cite[Prop. 2.5]{Gu2}. 
Further notice that  $e^B \phi_*=\phi_*e^{\phi^* B}$.
\end{rem}
 Abusing notation, for any bivector field $\pi$ such that
 $1+B^{\flat}\pi^{\sharp} \colon T^*M \to T^*M$ is invertible, we denote by $e^B \pi$ the unique bivector field whose graph is $e^B(graph(\pi))$. Here $B^{\flat}$
is the contraction in the first component of $B$.

Consider the connected group $\Omega^2(M) \rtimes \text{Diff}(M)_{\circ}$ (where the second factor denotes the diffeomorphisms isotopic to the identity), with multiplication $$(B_1,\phi_1)\cdot (B_2,\phi_2)=(B_1+(\phi_1^{-1})^* B_2\;,\;\phi_1\circ \phi_2).$$
The   partial\footnote{The action is defined whenever $1+B^{\flat}(\phi_*\pi)^{\sharp}$ is invertible.}
 action of $\Omega^2(M) \rtimes \text{Diff}(M)_{\circ}$ on $\Omega^3(M)\oplus\chi^2(M)$  
by
\begin{equation*}
 (B,\phi)\cdot (H, \pi)=((\phi^{-1})^*(H)-dB\;,\; e^B \phi_* \pi)
\end{equation*}
preserves $$MC(\mathfrak{L})=\{(H,\pi)\in \Omega_{closed}^3(M)\oplus \chi^2(M): \pi \text{ is a }H\text{-twisted Poisson structure}\}. $$
This can easily be checked  using the following two facts.
First, for every $H\in H^3_{closed}(M)$, the isomorphism $e^B \phi_*\colon TM\oplus T^*M \to TM\oplus T^*M$  maps the $H$-twisted Courant bracket to the $(\phi^{-1})^*(H)-dB$-twisted Courant bracket \cite[\S 2.2]{Gu2}. 
Second, $\pi$ is a
 $H$-twisted Poisson structure if{f} $graph(\pi)$ is involutive w.r.t. the $H$-twisted Courant bracket.

\begin{rem}
The map $e^B \phi_*$ is an isomorphism of Courant algebroids (from the
$H$-twisted  to the $(\phi^{-1})^*(H)-dB$-twisted Courant algebroid) 
covering $\phi$, by the above and Rem. \ref{ebphi}. As a consequence,  twisted Poisson structures lying in the same orbit of the $\Omega^2(M) \rtimes \text{Diff}(M)_{\circ}$ action
share  many properties. For instance, their associated Lie algebroids are isomorphic, and in particular the underlying (singular) foliations  are diffeomorphic. 

Notice further that the $\Omega^2(M) \rtimes \text{Diff}(M)_{\circ}$ action on $MC(\mathfrak{L})$ preserves the cohomology class of closed 3-forms. For instance, the orbit through $(H=0,\pi=0)$ is $\{(H',0): H' \text{ is exact}\}$.
\end{rem}
  
We will show that the natural equivalence relation on $MC(\mathfrak{L})$
is given by the above $\Omega^2(M) \rtimes \text{Diff}(M)_{\circ}$ action.
To do so, we first need a technical lemma. 
\begin{lem}\label{ddtp}
Let $X$ be a vector field on a manifold $M$ with flow $\phi^t$ defined for $t\in I\subset \RR$, let $\{C_t\}_{t\in I}$ be a smooth family of 2-forms and let $\pi$ be a bivector field. Denote $\pi_t:=(\phi_t)_*(e^{C_t}\pi)$.
Then
\begin{equation}\label{eq:2terms}
\frac{d}{dt}\pi_t=[X,\pi_t]+\wedge^2\tilde{\pi}_t\left ((\phi_{-t})^*(\frac{d}{dt}C_t)\right).
\end{equation} 
\end{lem}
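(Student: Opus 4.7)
The strategy is to split the $t$-dependence of $\pi_t$ into the diffeomorphism part and the $B$-field part, and handle them separately. Writing $\phi_{t+s}=\phi_s\circ\phi_t$ and using the product rule, one obtains
\begin{equation*}
\frac{d}{dt}\pi_t\;=\;\frac{d}{ds}\Big|_{s=0}(\phi_s)_*\pi_t\;+\;(\phi_t)_*\frac{d}{dt}(e^{C_t}\pi).
\end{equation*}
The first summand is controlled by the standard infinitesimal-pushforward formula: in the sign convention used throughout the paper (compatible with \eqref{gaugetpois} and item b) of Cor.~\ref{cor:tpois}), one has $\frac{d}{ds}|_{s=0}(\phi_s)_*\sigma=[X,\sigma]$ for any multivector field $\sigma$, which evaluated at $\sigma=\pi_t$ yields the desired $[X,\pi_t]$.

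For the second summand, the key pointwise identity is
\begin{equation*}
\frac{d}{du}\Big|_{u=0}(e^{uD}\sigma)\;=\;\wedge^2\tilde{\sigma}(D)
\end{equation*}
for a bivector field $\sigma$ and a $2$-form $D$. This follows from the graph description $(e^{uD}\sigma)^{\sharp}=\sigma^{\sharp}(1+uD^{\flat}\sigma^{\sharp})^{-1}$, obtained by applying $e^{uD}$ to $\mathrm{graph}(\sigma)\subset TM\oplus T^*M$ and solving for the new sharp map; differentiating at $u=0$ gives $-\sigma^{\sharp}D^{\flat}\sigma^{\sharp}$ as a map $T^*M\to TM$, and a short coordinate computation identifies this with the sharp of $\wedge^2\tilde{\sigma}(D)$. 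Now use the multiplicative property $e^{C_{t+s}}\pi=e^{C_{t+s}-C_t}(e^{C_t}\pi)$ and apply the identity with $\sigma=e^{C_t}\pi$ and $D=\frac{d}{dt}C_t$ to conclude that
\begin{equation*}
\frac{d}{dt}(e^{C_t}\pi)\;=\;\wedge^2\widetilde{e^{C_t}\pi}\!\left(\tfrac{d}{dt}C_t\right).
\end{equation*}

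To finish, I push this forward by $\phi_t$ using the naturality of the assignment $(\sigma,\omega)\mapsto \wedge^2\tilde\sigma(\omega)$ under a diffeomorphism $\psi$: namely $\psi_*\bigl(\wedge^2\tilde\sigma(\omega)\bigr)=\wedge^2\widetilde{\psi_*\sigma}\bigl((\psi^{-1})^*\omega\bigr)$, which holds because both the sharp map and the contraction commute with the diffeomorphism action. Applied with $\psi=\phi_t$ and $\sigma=e^{C_t}\pi$ this converts the $B$-field summand into $\wedge^2\tilde\pi_t\!\left((\phi_{-t})^*\tfrac{d}{dt}C_t\right)$, and summing with the flow contribution gives \eqref{eq:2terms}. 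The flow part and the naturality step are routine; the main obstacle is the pointwise $B$-field identity, specifically the identification of the operator $-\sigma^{\sharp}D^{\flat}\sigma^{\sharp}$ with the sharp of the bivector $\wedge^2\tilde\sigma(D)$, which requires a careful sign/combinatorial verification (cleanest in local coordinates, once one has unpacked the convention $\wedge^2\tilde\sigma=\tfrac{1}{2}(\sigma^{\sharp}\wedge\sigma^{\sharp})$).
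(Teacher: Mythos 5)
Your proof is correct and follows essentially the same route as the paper's: both split $\frac{d}{dt}\pi_t$ by the product rule into the flow contribution $[X,\pi_t]$ and the $B$-field contribution, derive the key identity $\frac{d}{dt}(e^{C_t}\pi)=\wedge^2\widetilde{(e^{C_t}\pi)}(\frac{d}{dt}C_t)$ from the formula $(e^{C_t}\pi)^{\sharp}=\pi^{\sharp}(1+C_t^{\flat}\pi^{\sharp})^{-1}$ of \cite[\S 4]{SW}, and finish by naturality of $(\sigma,\omega)\mapsto\wedge^2\tilde\sigma(\omega)$ under pushforward. Your version merely spells out a few steps (the identification of $-\sigma^{\sharp}D^{\flat}\sigma^{\sharp}$ with the sharp of $\wedge^2\tilde\sigma(D)$, and the multiplicativity $e^{C_{t+s}}\pi=e^{C_{t+s}-C_t}(e^{C_t}\pi)$) that the paper leaves implicit.
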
 
\begin{proof}
We have
\begin{equation}\label{ddte}
\frac{d}{dt}(e^{C_t}\pi) =
\wedge^2\widetilde{(e^{C_t}\pi)} (\frac{d}{dt}C_t).
\end{equation}
This follows from  
$(e^{C_t}\pi)^{\sharp}=\pi^{\sharp}(1+C_t^{\flat}\pi^{\sharp})^{-1}$
\cite[\S 4]{SW}, and from $\frac{d}{dt}(e^{C_t}\pi)^{\sharp}=-
(e^{C_t}\pi)^{\sharp}(\frac{d}{dt}C_t)^{\flat}(e^{C_t}\pi)^{\sharp}$.
Using eq. \eqref{ddte} in the first equality we obtain
 \begin{align*}
\frac{d}{dt}\pi_t&=(\phi_t)_*\left( \frac{d}{dt}(e^{C_t}\pi)+[X,e^{C_t}\pi]\right)
\\
&=(\phi_t)_*\left( \wedge^2\widetilde{(e^{C_t}\pi)}(\frac{d}{dt}C_t)\right)+ 
(\phi_t)_*[X,e^{C_t}\pi]
\end{align*}
which equals the r.h.s. of eq. \eqref{eq:2terms}.
\end{proof}

\begin{prop}\label{cor:Gu} The leaves of the  involutive singular distribution 
\begin{equation}\label{distrtwp}
span\{\mathcal{Y}^{(B,X)}: (B,X)\in \mathfrak{L}_{-1}=\Omega^2(M)\oplus \chi(M)\}
\end{equation}
on $MC(\mathfrak{L})$ coincide   with the orbits of the 
  partial action of $\Omega^2(M) \rtimes \text{Diff}(M)_{\circ}$ on  $MC(\mathfrak{L})$.
 \end{prop}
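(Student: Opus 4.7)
The plan is to show that the infinitesimal generators of the $\Omega^2(M)\rtimes\text{Diff}(M)_{\circ}$-action at any point $(H,\pi)\in MC(\mathfrak{L})$ span exactly the distribution \eqref{distrtwp} at that point, and then to conclude that orbits equal leaves by a standard homogeneity and connectedness argument.

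First I would fix $(H,\pi)\in MC(\mathfrak{L})$ and a pair $(B',X)\in \Omega^2(M)\oplus\chi(M)$. Denoting by $\phi^t$ the flow of $X$, I would consider the orbit curve obtained from the path $t\mapsto(tB',\phi^t)$ in the group:
$$\gamma(t):=(tB',\phi^t)\cdot(H,\pi)=\big((\phi^{-t})^{*}H-t\,dB',\; e^{tB'}(\phi^t)_*\pi\big),$$
and differentiate at $t=0$ componentwise. For the first component, $dH=0$ and Cartan's formula yield $-\mathcal{L}_X H-dB'=-d(\iota_X H+B')$. For the second component, the trick is to invoke Remark \ref{ebphi} to rewrite $e^{tB'}(\phi^t)_*\pi=(\phi^t)_*\bigl(e^{(\phi^t)^{*}(tB')}\pi\bigr)$; setting $C_t:=(\phi^t)^{*}(tB')$, which satisfies $C_0=0$ and $\dot C_0=B'$, Lemma \ref{ddtp} then gives $[X,\pi]+\wedge^2\tilde{\pi}(B')$. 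Comparing with \eqref{gaugetpois}, I would conclude
$$\dot\gamma(0)=\mathcal{Y}^{(B'+\iota_X H,\,X)}\big|_{(H,\pi)}.$$
Since the map $(B',X)\mapsto(B'+\iota_X H,X)$ is a bijection of $\Omega^2(M)\oplus\chi(M)$, the image of the Lie algebra of the group under the infinitesimal action at $(H,\pi)$ is exactly $\cD|_{(H,\pi)}$.

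To pass from this infinitesimal identification to equality of leaves and orbits, I would reason as follows. The group $\Omega^2(M)\rtimes\text{Diff}(M)_{\circ}$ is connected, so the orbit $\cO_{(H,\pi)}$ is path-connected; applying the above computation at any $g\cdot(H,\pi)$ in place of $(H,\pi)$, its tangent space at every point coincides with $\cD$, so $\cO_{(H,\pi)}$ is an integral submanifold of $\cD$ and hence is contained in the leaf through $(H,\pi)$. Conversely, since $\cD$ is spanned pointwise by vectors $\mathcal{Y}^{(B,X)}$ and each such vector arises as the initial velocity of an orbit arc (by the identification above, applied at every base point), any point of the leaf is reachable from $(H,\pi)$ by a concatenation of finitely many orbit arcs and therefore lies in $\cO_{(H,\pi)}$.

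The main obstacle I expect is the differentiation of the second component of $\gamma(t)$: the expression $e^{tB'}(\phi^t)_*\pi$ does not directly match the hypothesis of Lemma \ref{ddtp}, so the interchange $e^{B}\phi_*=\phi_*e^{\phi^{*}B}$ from Remark \ref{ebphi} is needed to bring it into the form $(\phi^t)_*(e^{C_t}\pi)$; the correction $B\mapsto B+\iota_X H$ that appears in the comparison is then forced by the closedness of $H$ used in the first component. A minor subtlety is that the action is only partial, but for each base point it is defined on a neighborhood of the identity in the group, which suffices for the local integrations underlying the argument above.
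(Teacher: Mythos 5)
Your proposal is correct and follows essentially the same route as the paper: differentiate the orbit curve of $t\mapsto(tB,\phi_t)$ at $t=0$, use Lemma \ref{ddtp} (after the rewriting $e^{tB}(\phi_t)_*\pi=(\phi_t)_*e^{(\phi_t)^*(tB)}\pi$, which the paper does implicitly) together with $dH=0$, compare with eq.~\eqref{gaugetpois}, and observe that the shift $B\mapsto B\pm\iota_XH$ is a bijection of $\Omega^2(M)\oplus\chi(M)$, so the two singular distributions agree pointwise. The paper stops there, taking for granted that agreement of the distributions implies agreement of leaves and orbits; your extra connectedness/concatenation paragraph merely spells out that standard step.
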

\begin{proof} It suffices to show that \eqref{distrtwp} coincides with the singular distribution given by the infinitesimal action associated to the group action of 
$\Omega^2(M) \rtimes \text{Diff}(M)_{\circ}$. Notice that the Lie algebra of this group is $\Omega^2(M)\oplus \chi(M)$, so take an element $(B,X)\in  \Omega^2(M)\oplus \chi(M)$. We compute the corresponding generator of the action $\mathcal{Z}^{(B,X)}$ at a point
 $(H,\pi)\in MC(\mathfrak{L})$: we have
\begin{align}
\mathcal{Z}^{(B,X)}|_{(H,\pi)}:=\frac{d}{dt}|_{t=0}(tB,\phi_t)\cdot (H,\pi)=\big(-d(\iota_X H+B)\;,\; [X,\pi]+\wedge^2\tilde{\pi}(B) \big)
\end{align}
where $\phi_t$ is the flow of $X$ and  where we 
use Lemma \ref{ddtp} to compute  $\frac{d}{dt}|_{t=0} (\phi_t)_*e^{(\phi_t)^*(tB)}(\pi)$. 
Comparing this with eq. \eqref{gaugetpois} we see that 
$$\mathcal{Z}^{(B-\iota_X H,X)}|_{(H,\pi)}=\mathcal{Y}^{(B,X)}|_{(H,\pi)}.$$
This shows that the two singular distributions agree at the point $(H,\pi)$, and repeating at every point of $MC(\mathfrak{L})$ we conclude that the two   singular distributions agree on $MC(\mathfrak{L})$.
\end{proof}

We conclude describing explicitly the flow on $MC(\mathfrak{L})$ induced by a fixed element $(B,X)$ of $\mathfrak{L}_{-1}$.

\begin{prop}\label{flowH} Let $(B,X)\in \Omega^2(M)\oplus \chi(M)$. The integral curve of  $\mathcal{Y}^{(B,X)}$ starting at the point $(H,\pi)\in \mathfrak{L}_{0}$ reads
\begin{equation}\label{curveL0}
t \mapsto (H-tdB,(\phi_t)_*e^{C_t^H} \pi)
\end{equation}
where 
$\phi$ denotes the flow of $X$ and
\begin{equation*}
C_t^H:=D_t+\int_{0}^t (\phi_{s}^*)(B-\iota_X H) ds 
\end{equation*}
for $D_t$ the unique solution with $D_0=0$ of 
\begin{equation*}
\frac{d}{dt}{D_t}=t(\phi_{t}^*)\iota_X dB.
\end{equation*} 
(The above curve is defined as long as $\phi_t$ is defined and
 $1+(C_t^H)^{\flat}\pi^{\sharp}$ is invertible.) 
\end{prop}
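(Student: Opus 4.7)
The plan is to verify directly that the curve $t \mapsto (H_t, \pi_t) := (H - tdB,\, (\phi_t)_*e^{C_t^H}\pi)$ satisfies the ODE
\[
\tfrac{d}{dt}(H_t, \pi_t) \;=\; \mathcal{Y}^{(B,X)}\big|_{(H_t, \pi_t)}
\]
with the correct initial condition $(H_0, \pi_0) = (H, \pi)$, and then conclude by uniqueness of integral curves. By formula \eqref{gaugetpois}, the right-hand side at the point $(H_t, \pi_t)$ equals
\[
\bigl(-dB,\;[X,\pi_t] + \wedge^2\tilde{\pi}_t(B - \iota_X H_t)\bigr).
\]

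First I would check the $\Omega^3(M)$-component: since $H_t = H - tdB$, one has $\tfrac{d}{dt}H_t = -dB$, matching the first slot of $\mathcal{Y}^{(B,X)}$ trivially. The initial condition $H_0 = H$ is obvious.

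The main step is the $\chi^2(M)$-component. The curve $\pi_t = (\phi_t)_*e^{C_t^H}\pi$ is exactly of the form treated in Lemma \ref{ddtp} (with $C_t = C_t^H$), so
\[
\tfrac{d}{dt}\pi_t \;=\; [X,\pi_t] + \wedge^2\tilde{\pi}_t\!\left((\phi_{-t})^*\tfrac{d}{dt}C_t^H\right).
\]
Hence it remains to verify the identity
\[
(\phi_{-t})^*\tfrac{d}{dt}C_t^H \;=\; B - \iota_X H_t \;=\; B - \iota_X H + t\,\iota_X dB.
\]
Differentiating the defining formula of $C_t^H$ gives
\[
\tfrac{d}{dt}C_t^H \;=\; \tfrac{d}{dt}D_t + \phi_t^*(B - \iota_X H) \;=\; t\,\phi_t^*(\iota_X dB) + \phi_t^*(B - \iota_X H),
\]
and pulling back by $\phi_{-t}$ yields exactly $B - \iota_X H + t\,\iota_X dB$, as required. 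The initial condition $\pi_0 = \pi$ follows from $D_0 = 0$ and $\phi_0 = \mathrm{id}$, so that $C_0^H = 0$.

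The only nontrivial point is recognizing the role of the correction term $D_t$: because the 2-form entering $\mathcal{Y}^{(B,X)}$ at time $t$ is $B - \iota_X H_t$ rather than $B - \iota_X H$, the naive candidate $C_t = \int_0^t \phi_s^*(B - \iota_X H)\,ds$ fails by precisely $t\,\iota_X dB$ (pulled back by $\phi_t^*$) at the derivative level, and $D_t$ is manufactured to cancel exactly this defect. This motivates the particular ODE defining $D_t$, and once $D_t$ is in place the verification is the routine differentiation above.
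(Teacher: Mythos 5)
Your proposal is correct and follows essentially the same route as the paper's proof: apply Lemma \ref{ddtp} to $\pi_t=(\phi_t)_*e^{C_t^H}\pi$, differentiate $C_t^H$ to get $(\phi_{-t})^*\frac{d}{dt}C_t^H = B-\iota_X(H-tdB)$, match this against eq. \eqref{gaugetpois} evaluated at $(H_t,\pi_t)$, and invoke the initial condition. Your closing remark explaining why the correction term $D_t$ is needed is a nice addition not spelled out in the paper, but the argument itself is identical.
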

\begin{proof}
Fix $(H,\pi)\in\mathfrak{L}_{0}$ and consider the curve defined in eq. \eqref{curveL0}.
The curve is tangent to the vector field $\mathcal{Y}^{(B,X)}$ at all times $t$, by virtue of  Lemma \ref{ddtp} and since 
$$(\phi_{-t})^*(\frac{d}{dt}C_t^H)=(\phi_{-t})^*[
t(\phi_{t}^*)\iota_X dB+(\phi_{t}^*)(B-\iota_X H)]=
B-\iota_X (H-tdB).$$
Since at time $t=0$ the curve is located at the point $(H,\pi)$, we are done.
\end{proof} 

\begin{rem}
   Let $(B,X)\in \Omega^2(M)\oplus \chi(M)$ where $B$ is closed, and let $(H,\pi)\in  \mathfrak{L}_0$. 
    Then $D_t=0$, and consequently 
$(\phi_t)_*e^{C_t^H}$ is a one parameter \emph{group} of orthogonal vector bundle automorphisms of $TM\oplus T^*M$ (see \cite[Prop. 2.6]{Gu2}). 
Hence 
the second component of  integral curve of $\mathcal{Y}^{(B,X)}$ starting at $(H,\pi)$ is    the image of (the graph of) $\pi$ under a  one parameter  group of orthogonal vector bundle automorphisms of $TM\oplus T^*M$.
\end{rem}

\subsection{Generalized complex structures and Courant algebroids}\label{sec:gcs}

In this subsection we
consider deformations of Courant algebroid structures on a fixed pseudo-Riemannian vector bundle and of their generalized complex structures. {Deformations of generalized complex structures within a fixed Courant algebroid were studied by Gualtieri in \cite[\S5]{Gu2}.}

Fix a Courant algebroid $E\to M$ and a generalized {almost} complex structure  $J$, i.e. a vector bundle map  $J \colon E \to E$ with $J^2=-Id$ preserving the fiberwise pairing.
  $J$ can be equivalently encoded by a  complex
 {maximal isotropic subbundle} 
   $A\subset E\otimes \CC$ transverse to the complex conjugate $\bar{A}$. The correspondence is as follows: given $J$, define $A$ to be the $+i$-eigenbundle of the complexification of $J$. Given $A$, consider the complex endomorphism of $E\otimes \CC$ with  $+i$-eigenbundle $A$ and  $-i$-eigenbundle $\bar{A}$, and define $J$ to be the restriction to $E$. {Further we have: $J$ is a \emph{generalized   complex structure} (i.e., it satisfies a certain integrability condition \cite{Hi}\cite[Def. 3.1]{Gu2}) if{f} $A$ is a \emph{complex Dirac
structure}.}

Hence we are in the situation of \S \ref{dirac}, except that we consider \emph{complex} 
 {maximal isotropic subbundles} 
in the complexification $E\otimes \CC$ of a (real) Courant algebroid. 
Notice that $E$ does not have a preferred splitting into 
 {maximal isotropic subbundles.} 
On the other hand, $E\otimes \CC$ is a complex Courant algebroid with a splitting $E\otimes \CC=A\oplus \bar{A}$ into complex 
 {maximal isotropic subbundles.} 
The construction of  \cite[Thm. 4.5]{Dima} leads to a \emph{complex} 
graded manifold\footnote{It is given by a sheaf of graded commutative algebras over $\CC$ satisfying the usual locally triviality condition.} with a degree 2 symplectic structure $\op\cdot,\cdot\cp$, namely $\cN=T^*[2]A[1]$. We denote its ``global functions'', a graded commutative algebra over $\CC$, by $C_{\CC}(\cN)$.

\begin{lem}\label{keygcs} 
Fix a Courant algebroid $E\to M$ and a generalized {almost} complex structure $J$, encoded by a complex 
{maximal isotropic subbundle}
$A$ transverse to $\bar{A}$.  The following quadruple forms a {curved} V-data:
\begin{itemize}
\item the complex graded Lie algebra $L:=C_{\CC}(\cN)[2]$ with Lie bracket $\op\cdot,\cdot \cp$
\item its complex abelian subalgebra $\ga:=pr^*(C_{\CC}(A[1]))[2] \cong \Gamma(\wedge A^*)[2]$ 
\item the natural projection $P \colon L \to \ga$ given by evaluation on the base $A[1]$
\item $\Delta={-\varphi}+h_{d_A}+F^*(h_{d_{A^*}}){-\psi}$,
 defined analogously to  \S \ref{dirac},
\end{itemize}
hence by Thm. \ref{voronovderived} we obtain a   complex\footnote{Hence the underlying graded vector space is complex and the multibrackets are $\CC$-linear.} {curved} $L_{\infty}[1]$-structure  $\ga^P_{\Delta}$.

For all $\Phi\in \Gamma(\wedge^2 A^*)$ we have: $\Phi[2]$ is a Maurer-Cartan element in $\ga^P_{\Delta}$  if{f} $$graph(-\Phi):=\{(X-\iota_X \Phi):X\in A\}\subset A\oplus \bar{A}=E\otimes \CC$$  is a complex Dirac structure in $E\otimes \CC$.

{Further, the above quadruple forms a   V-data if{f} $J$ is a generalized complex structure.}

\end{lem}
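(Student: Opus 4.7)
The proof closely parallels that of Lemma \ref{keydirac}, so I would adapt that argument to the complex setting, noting one simplification. In Lemma \ref{keydirac}, a general isotropic complement $K$ to the Dirac subbundle $A$ produced a cubic term $\psi\in \Gamma(\wedge^3 A)$ in $\Delta$; here the complement is $\bar A$, which is \emph{also} a Dirac subbundle because it is the complex conjugate of the Dirac structure $A$. Equivalently, the integrability of the generalized complex structure $J$ forces both $A$ and $\bar A$ to be involutive, so the associated Lie quasi-bialgebroid degenerates to a genuine Lie bialgebroid and the term $\psi$ vanishes. This is why $\Delta = h_{d_A}+F^*(h_{d_{A^*}})$ carries no cubic correction.

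For the V-data axioms, the strategy is exactly that of Lemma \ref{keydirac}, transported to the complex graded manifold $\cN = T^*[2]A[1]$: abelianity of $\ga$ and the fact that $ker(P)$ is a graded Lie subalgebra both follow from the fact that the base $A[1]$ and the cotangent fibres of $\cN$ are Lagrangian with respect to the canonical degree $-2$ Poisson bracket $\op\cdot,\cdot\cp$. The identity $\op\Delta,\Delta\cp=0$ is the complex analogue of \cite[Thm.~3.8.2]{DimaThesis}: it encodes that $E\otimes\CC = A\oplus\bar A$ is a complex Courant algebroid. Finally, $\Delta\in ker(P)$ follows from the local coordinate expressions for $h_{d_A}$ and $F^*(h_{d_{A^*}})$ recalled in Lemma \ref{keydirac}.

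For the Maurer-Cartan statement, fix $\Phi\in \Gamma(\wedge^2 A^*)$ and mimic the computation in Lemma \ref{keydirac} verbatim, now with no $\psi$-dependent contributions. This yields
\begin{align*}
P\op\Delta,\Phi\cp &= d_A\Phi,\\
P\op\op\Delta,\Phi\cp,\Phi\cp &= -[\Phi,\Phi]_{A^*},
\end{align*}
and all higher iterated brackets vanish. Hence the Maurer-Cartan equation reduces to $d_A\Phi - \tfrac{1}{2}[\Phi,\Phi]_{A^*}=0$, which by the complex analogue of \cite[Prop.~3.5]{QPois} (applied to the Lie bialgebroid $(A,A^*\cong\bar A)$) is precisely the condition for $graph(-\Phi)\subset A\oplus\bar A$ to be a complex Dirac structure in $E\otimes\CC$.

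The main point requiring care is not any new content but rather the observation that the coordinate lemmas \cite[Lemma 3.3.1, Lemma 3.6.2]{DimaThesis} and the derivation of \cite[Prop.~3.5]{QPois} go through $\CC$-linearly on $\cN = T^*[2]A[1]$; this is routine, since the whole graded-symplectic formalism is tensorial and commutes with complexification of the underlying real bundle.
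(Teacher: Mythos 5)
Your proposal is correct and follows essentially the same route as the paper, whose proof of this lemma is literally ``exactly as the proof of Lemma \ref{keydirac}, but working over $\CC$ and taking $K:=\bar{A}$.'' Your additional observations --- that $\bar{A}$ is itself Dirac so the Lie quasi-bialgebroid becomes a genuine Lie bialgebroid, $\psi$ vanishes, and the Maurer--Cartan equation reduces to $d_A\Phi-\tfrac{1}{2}[\Phi,\Phi]_{A^*}=0$ --- are accurate elaborations of exactly what the paper leaves implicit.
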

\begin{proof}
Exactly as the proof of  Lemma \ref{keydirac}, but working over $\CC$ and taking $K:=\bar{A}$.
\end{proof}

As in \S \ref{dirac}, let $\cM$ be the (real) degree 2 symplectic manifold with self-commuting degree $3$ function $\Delta$ corresponding to the Courant algebroid $E$.  We have  $C_{\CC}(\cN)=C(\cM)\otimes \CC$.
Since $\Delta$ defines a complex Courant algebroid structure on $E\otimes \CC$ which is the complexification of a (real) Courant algebroid structure on $E$, it follows that
$\Delta\in C(\cM)\subset C_{\CC}(\cN)$. We are interested only in complex Courant algebroid structures on $E\otimes \CC$ which are complexifications of Courant algebroid structures on $E$, so we deform $\Delta$ only within $C(\cM)$.

\begin{cly}\label{cor:gc}
Fix a Courant algebroid $E\to M$ and a generalized complex structure $J$, encoded by a complex Dirac structure $A$.   Let $\cM$ and the V-data 
 $(L,\ga,P,\Delta)$  be as in Lemma \ref{keygcs}. 
Then there exists a  (real) $L_{\infty}[1]$-algebra structure  on $(C(\cM)[2])[1]\oplus \ga$ with the property that
for all $\tilde{\Delta}\in C(\cM)_3$
and small enough
 $\tilde{\Phi}\in \Gamma(\wedge^2A^*)$:
\begin{align*}
 &\begin{cases}
  \Delta+\tilde{\Delta} \text{ defines a Courant algebroid structure on }E\\   
 graph(-\tilde{\Phi}) \text{ is  the }+i\text{-eigenbundle of a generalized complex structure there}
\end{cases}\\
 \Leftrightarrow
& (\tilde{\Delta}[3],\tilde{\Phi}[2]) \text{ is a MC element of } (C(\cM)[2])[1]\oplus \ga.
\end{align*}
\end{cly}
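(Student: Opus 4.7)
The proof parallels that of Corollary~\ref{cor:dirac}, with two twists. First, because generalized complex structures live in $E\otimes\CC$, the V-data of Lemma~\ref{keygcs} and the complex $L_\infty[1]$-algebra produced by Thm.~\ref{machine} (applied with $\Phi=0$) are naturally defined over $\CC$. Second, we wish to deform $\Delta$ only within $C(\cM)$, so that the new structure still corresponds to a Courant algebroid on the real bundle $E$ rather than only on its complexification.

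To enforce the second point, I would observe that $C(\cM)[2]$ is a real graded Lie subalgebra of $L=C_\CC(\cN)[2]$, and that $D=\op\Delta,\cdot\cp$ preserves it since $\Delta\in C(\cM)$. Hence $ker(P)\cap C(\cM)[2]$ is a $D$-stable graded Lie subalgebra of $L$, and Remark~\ref{kerPa} restricts the $L_\infty[1]$-algebra of Thm.~\ref{machine} to a (real) $L_\infty[1]$-algebra structure on $(ker(P)\cap C(\cM)[2])[1]\oplus\ga$, where $\ga$ is viewed as a real vector space. A complete filtration on $L$ analogous to Remark~\ref{rem:convDirac} descends to this subalgebra and takes care of any convergence issue.

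For the Maurer-Cartan correspondence I would examine each component separately. On $L[1]$, MC-ness reads $\op\Delta+\tilde\Delta,\Delta+\tilde\Delta\cp=0$; combined with $\tilde\Delta\in ker(P)\cap C(\cM)$, this is precisely (by Remark~\ref{rem:kerDir}) the statement that $\Delta+\tilde\Delta$ defines a new Courant algebroid structure on $E$, with unchanged pairing, for which $A$ remains a complex Dirac subbundle of the complexification. On $\ga$, the MC-ness of $\tilde\Phi$ in $\ga^P_{\Delta+\tilde\Delta}$ translates via Lemma~\ref{keygcs}, now applied to the new Courant algebroid, into the assertion that $graph(-\tilde\Phi)$ is a complex Dirac structure in $E\otimes\CC$.

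The only real subtlety, and where the ``small enough'' hypothesis enters, is in upgrading ``complex Dirac structure'' to ``$+i$-eigenbundle of a generalized complex structure'': this further requires the transversality $graph(-\tilde\Phi)\cap\overline{graph(-\tilde\Phi)}=0$. Since this condition holds at $\tilde\Phi=0$ (where $graph(0)=A$ is transverse to $\bar A$ by the hypothesis on $J$) and is an open condition on $\tilde\Phi$, it persists for all sufficiently small $\tilde\Phi$. This openness point, rather than any computation, is the main thing one needs to justify carefully; everything else is a direct application of Thm.~\ref{machine}, Lemma~\ref{keygcs}, and Remark~\ref{rem:kerDir}.
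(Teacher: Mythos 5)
Your proposal is correct and follows essentially the same route as the paper: apply Thm.~\ref{machine} with $\Phi=0$ over $\CC$, view the result as a real $L_\infty[1]$-algebra, and restrict to the subalgebra $(ker(P)\cap C(\cM)[2])[1]\oplus\ga$ using that $\Delta\in C(\cM)$. You additionally make explicit the role of the ``small enough'' hypothesis (openness of the transversality $graph(-\tilde{\Phi})\cap\overline{graph(-\tilde{\Phi})}=0$), a point the paper's very terse proof leaves implicit, which is a welcome clarification rather than a divergence.
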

\begin{proof}
Apply Thm. \ref{machine} (which holds over $\CC$ as well) with $\Phi=0$ to obtain the complex $L_{\infty}[1]$-structure
  $({L}[1]\oplus \ga)^P_{\Delta}$. View the latter as a real $L_{\infty}[1]$-structure. Since $\Delta\in C(\cM)[2]$, it follows that  $(C(\cM)[2])[1]\oplus \ga $ is a $L_{\infty}[1]$-subalgebra. {Use   Lemma \ref{keygcs}  
 to phrase the conclusions of Thm.  \ref{machine} in terms of Courant algebroids and generalized complex structures.}
\end{proof}


\begin{rem}
{To see that the above V-data is filtered, proceed exactly as in Remark \ref{rem:convDirac}.}
\end{rem}

\subsection{Deformations of complex structures}\label{sec:cs}

In this subsection we study a special case of the situation considered in \S\ref{sec:gcs}: we study  deformations of  a complex structure  on $M$ to generalized complex structures in the $H$-twisted Courant algebroids $TM\oplus T^*M$, where $H$ ranges through all {(real)} closed $3$-forms.
  Deformations of a complex structure within a fixed  Courant algebroid (the standard one) were studied by Gualtieri in \cite[\S 5.3]{Gu2}.

Fix a complex structure $I$ on a manifold $M$. It gives rise to a generalized complex structure 
$J_I:= \left( \begin{smallmatrix} -I&0\\ 0&I^*\end{smallmatrix} \right)$
 for the standard Courant algebroid $TM\oplus T^*M$, whose $+i$-eigenbundle is the complex Dirac subbundle  $A:=T_{0,1}\oplus T^*_{1,0}$ \cite[\S3]{Gu2}. Here $T_{1,0}$ and $T_{0,1}$ denote the holomorphic and anti-holomorphic
tangent bundle of $(M,I)$ respectively.

{Recall that  
 $\Omega^{k}(M,\RR)= 
 \bigoplus_{p,q\ge 0, p+q=k}\Omega^{{p,q}}(M,\RR)$.} 
 
 {Consider $
  \Gamma(\wedge A^*)=\bigoplus_{r\ge 0,s\ge 0}\Omega^{0,r}(M,\wedge^sT_{1,0})$.}
Fix $p\ge 1$ and 
 $\Theta_i \in \Omega^{0,r_i}(M,\wedge^{s_i}T_{1,0})$ with $s_i\ge 1$ (for $i=1,\dots,p$).
We define a map
$$(\Theta_1^{\sharp}\wedge \dots \wedge \Theta_p^{\sharp})\colon \Omega^{p,q}(M,\RR)\to \Omega^{0,q+\sum_ir_i}(M,\wedge^{-p+\sum_i s_i}T_{1,0})$$
similarly to the map defined at the beginning of  \S \ref{sec:tpois}, but taking into account the differential form part of the $\Theta_i$, which simply gets wedge-multiplied. More precisely, assume $\Theta_i=\omega_i\otimes \pi_i$ with $\omega_i\in\Omega^{0,r_i}(M,\CC),\pi_i\in \Gamma(\wedge^{s_i}T_{1,0})$, let $\alpha\in 
\Omega^{p,0}(M,\CC)$ and $\sigma\in \Omega^{0,q}(M,\CC)$. Then the above map is given by
$$\alpha\wedge \sigma\mapsto \pm \sigma\wedge\omega_1\wedge\dots\wedge\omega_p\otimes \left((\pi_1^{\sharp}\wedge \dots \wedge \pi_p^{\sharp})\alpha\right),$$
where the last expression on the r.h.s. was defined at
 the beginning of  \S \ref{sec:tpois}, and
the sign $\pm$ is the parity of ${pq+\sum_{i=1}^p(p+s_1+\dots+s_{i-1})r_i}$ (using the convention $s_0:=0$).

Further we  define $[\Theta_1,\Theta_2]:= (-1)^{s_1r_2} 
  \omega_1\wedge\omega_2\otimes[\pi_1,\pi_2].$
 
\begin{cly}\label{cor:cplx} 
Let $(M,I)$ be a complex manifold.
There is an $\li[1]$-algebra structure on $$\mathfrak{C}:=
{\Omega(M,\RR)[3]}
\oplus
\bigoplus_{r\ge 0,s\ge 0}\Omega^{0,r}(M,\wedge^sT_{1,0})[2]$$
 whose only non-vanishing multibrackets (up to permutations of the entries) are
\begin{itemize}
\item[a)] the differential, which maps $(H,\Theta)$ to $(-dH,\bar{\partial}\Theta{+H^{0,3}})$,\\
 {where $H^{0,3}$ denotes the component of $H$ lying in $\Omega^{0,3}(M,\RR)$,}
  \item[b)] $\{\Theta_1,\Theta_2\}=(-1)^{r_1+r_2+s_1+1}[\Theta_1,\Theta_2]$ for  $\Theta_i \in \Omega^{0,r_i}(M,\wedge^{s_i}T_{1,0})$,
  \item[c)] $\{H,\Theta_1,\dots,\Theta_p\}\;=\;(-1)^{\sum_{i=1}^p{s_i(p-i)}}(\Theta_1^{\sharp}\wedge \dots \wedge \Theta_p^{\sharp})H$ 
  \\for all $p\ge1$, where   $H\in\Omega^{p,q}(M,\RR)$ and $\Theta_i \in \Omega^{0,r_i}(M,\wedge^{s_i}T_{1,0})$ with $s_i\ge 1$.\end{itemize}
Its Maurer-Cartan elements are exactly pairs $(H[3],\Theta[2])$ where $$H\in {{\Omega^3(M,\RR)}},
\;\;\;\;\;\Theta \in \Omega^{0,2}(M,\CC)\oplus
\Omega^{0,1}(M,T_{1,0})\oplus \Gamma(\wedge^2T_{1,0})$$
satisfy: $dH=0$, and 
$-\Theta$ defines a deformation of $J_I$ to a $-H$-twisted generalized complex structure.
 \end{cly}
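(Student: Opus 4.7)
\textit{Plan.} I will deduce this corollary from Cor. \ref{cor:gc} applied to the standard Courant algebroid $E=TM\oplus T^*M$ with the generalized complex structure $J=J_I$, exactly as Cor. \ref{cor:tpois} was deduced from Cor. \ref{cor:dirac}. The complex Dirac bundle is $A=T_{0,1}\oplus T^*_{1,0}$, so $\ga=\Gamma(\wedge A^*)[2]\cong \bigoplus_{r,s\ge 0}\Omega^{0,r}(M,\wedge^s T_{1,0})[2]$ and this already matches the second summand of $\mathfrak{C}$. For the first summand, I will set up local coordinates on $\cN=T^*[2]A[1]$ in analogy with the proof of Cor. \ref{cor:tpois}: local holomorphic base coordinates $z_j,\bar z_j$ on $M$, fiber coordinates $\bar p_j$ on $T^*_{0,1}[1]$ and $w_j$ on $T_{1,0}[1]$ (these span the base $A[1]$), together with their conjugate degree-$2$ momenta; the Courant algebroid function is the sum of $h_{d_A}$, $F^*(h_{d_{A^*}})$ and the analogue of $\sum P_iv_i$, encoding the anchor of the standard Courant algebroid.

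Under the Legendre transformation an $(a,b)$-form is represented by a function which is polynomial of degree $a$ in the momenta conjugate to $w_j$ (from $T^*_{1,0}$ factors) and of degree $b$ in the base coordinate $\bar p_j$ (from $T^*_{0,1}$ factors). Hence a real $3$-form $H$ lies in $\ker(P)\cap C(\cM)[2]$ precisely when its $(0,3)$-component vanishes; together with reality this forces $H$ to be of type $(1,2)+(2,1)$. More generally, an $(a,b)$-form with $a\ge 1$ lies in $\ker(P)$, which shows that $\bigoplus_{p\ge 1,q\ge 0}\Omega^{p,q}(M,\RR)[3]$ embeds into $(\ker(P)\cap C(\cM)[2])[1]$. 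Writing out the bi-degrees one checks that $\mathfrak{C}$ is stable under the multibrackets of Cor. \ref{cor:gc}, hence forms a real $L_\infty[1]$-subalgebra.

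Next, I will verify the explicit multibracket formulas a)--c) by applying \eqref{diffV2}, \eqref{vorderlong} and \eqref{vorder} in the above coordinates. The differential on forms comes from the $\sum P_iv_i$ part of $\Delta$ as in Cor. \ref{cor:tpois}, and splits along types into $-\partial-\bar\partial$; on the form part of $\mathfrak{C}$ only $-d$ survives on the nose, while the $h_{d_A}$ part of $\Delta$ induces $\bar\partial$ on $\Omega^{0,r}(M,\wedge^s T_{1,0})$ via \cite[Lemma~3.3.1]{DimaThesis}. The binary bracket b) results from $F^*(h_{d_{A^*}})$ together with \cite[Lemma~3.6.2]{DimaThesis}, giving the Schouten bracket on $\Gamma(\wedge T_{1,0})$ extended to the antiholomorphic form coefficients with the expected Koszul sign. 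Formula c) is obtained by iterating $p$ brackets against the form $H$: each bracket with a $\Theta_i=\omega_i\otimes\pi_i$ contracts one $T^*_{1,0}$-factor of $H$ via $\pi_i^\sharp$ and wedges its antiholomorphic form part $\omega_i$; after $p$ contractions all holomorphic covectors of $H$ are absorbed, producing the map $(\Theta_1^\sharp\wedge\cdots\wedge\Theta_p^\sharp)$ as defined in the statement. The remaining task is the bookkeeping of signs, which comes out of the combination of Voronov's sign in \eqref{vorder}, the d\'ecalage \eqref{deca}, and the sign in the definition of $\Theta_1^\sharp\wedge\cdots\wedge\Theta_p^\sharp$.

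Finally, the Maurer-Cartan characterization follows by combining Cor. \ref{cor:gc} with the discussion at the end of Cor. \ref{cor:tpois}: $(H[3],\Theta[2])\in MC(\mathfrak{C})$ means $\Delta+H$ defines a Courant algebroid structure on $TM\oplus T^*M$ for which $J_I$ deforms to a generalized complex structure with $+i$-eigenbundle $\operatorname{graph}(-\Theta)$, and by \cite[\S 4]{Dima}\cite[\S 8]{HDirac} the Courant algebroid thus produced is the $(-H)$-twisted one, with $dH=0$ equivalent to self-commutation of $\Delta+H$. The type condition $H\in\Omega^{1,2}\oplus\Omega^{2,1}$ has been identified above. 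The main obstacle I anticipate is the explicit sign bookkeeping in formula c), which must be cross-checked with the sign appearing in the definition of $(\Theta_1^\sharp\wedge\cdots\wedge\Theta_p^\sharp)$; the rest is essentially a structural specialization of the general machinery.
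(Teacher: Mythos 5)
Your proposal follows essentially the same route as the paper's proof: apply Cor.~\ref{cor:gc} to the standard Courant algebroid with $A=T_{0,1}\oplus T^*_{1,0}$, identify $\bigoplus_{p\ge 1,q\ge 0}\Omega^{p,q}(M,\RR)[2]$ as an abelian subalgebra of $\ker(P)$ preserved by $\op\Delta,\cdot\cp$ so that $\mathfrak{C}$ is an $\li[1]$-subalgebra, compute the multibrackets a)--c) in the canonical coordinates on $T^*[2]A[1]$, and obtain the Maurer--Cartan characterization from the $(-H)$-twisted Courant bracket dictionary. The one imprecision is your description of the Hamiltonian $\Delta$: for this splitting one has $\Delta=h_{d_A}+F^*(h_{d_{A^*}})=\sum_i P_iv_i+\sum_i\bar P_i\bar v_i$ (the de Rham differential) with no additional ``$\sum P_iv_i$'' summand encoding the anchor, but since the differentials you extract ($-d$ on forms, $\bar\partial$ on $\Gamma(\wedge A^*)$) are the correct ones, this does not affect the argument.
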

\begin{rem} 
1)  The graded vector space $\mathfrak{C}$  is concentrated in degrees $\{-2,\dots,dim_{\RR}(M)-2\}$, and its degree $i$ component is 
$\Omega^{{i+3}}(M,\RR)\oplus
\bigoplus\Omega^{0,r}(M,\wedge^sT_{1,0})$
 for    $r+s=i+2$.

2) We make precise the meaning of  ``$-\Theta$ defines a deformation of $J_I$ to a $-H$-twisted generalized complex structure'': it means that 
$graph(-\Theta)\subset A\oplus \bar{A}=(TM\oplus T^*M)\otimes \CC$
is the $+i$-eigenbundle of a generalized complex structure in the Courant algebroid  $(TM\oplus T^*M,\oo \cdot, \cdot \cc_{-H})$ (the   Courant bracket twisted by $-H$ was defined  in \S\ref{sec:tpois}.)
For instance, if $\Theta=B\in \Omega^{0,2}(M,\CC)$, then 
$graph(-\Theta)=\{X+\xi-\iota_XB:X\in T_{0,1},\xi\in T^*_{1,0}\}$.
\end{rem}

 We make more explicit the Maurer-Cartan condition for the $\li[1]$-algebra of Cor. \ref{cor:cplx}. 
$(H,\Theta)$ is a Maurer-Cartan element if $dH=0$ and the following   equation {of order four} is satisfied:
\begin{equation}\label{mcgc}
\bar{\partial}\Theta{+H^{0,3}}\pm\frac{1}{2}[\Theta,\Theta]
+\Theta^{\sharp}H^{1,2}\pm \frac{1}{2}(\Theta^{\sharp}\wedge\Theta^{\sharp})H^{2,1}{\pm \frac{1}{6}(\Theta^{\sharp}\wedge\Theta^{\sharp}\wedge\Theta^{\sharp})H^{3,0}}=0,
\end{equation}
where the signs $\pm$ depend on $\Theta$.

We spell out three special cases.
When $\Theta=B\in \Omega^{0,2}(M,\CC)$, eq. \eqref{mcgc} is equivalent to $$
\bar{\partial}B{+H^{0,3}}=0 \in \Omega^{0,3}(M,\CC).$$

When $\Theta=\varphi\in \Omega^{0,1}(M,T_{1,0})$, decomposing the l.h.s. of eq. \eqref{mcgc} according to bidegrees, we see that eq. \eqref{mcgc} is equivalent to 
\begin{align*}
\bar{\partial}\varphi+\frac{1}{2}[\varphi,\varphi]&=0 \in \Omega^{0,2}(M,T_{1,0}),\\
{H^{0,3}+}\varphi^{\sharp}H^{1,2}- \frac{1}{2}(\varphi^{\sharp}\wedge\varphi^{\sharp})H^{2,1}{- \frac{1}{6}(\varphi^{\sharp}\wedge\varphi^{\sharp}\wedge\varphi^{\sharp})H^{3,0}}&=0\in \Omega^{0,3}(M,\CC).
\end{align*} The first equation states that $-\varphi$ defines a deformation of $I$ to an (integrable) complex structure $I_{-\varphi}$. The second condition is equivalent to $H$ being of type $(2,1)+(1,2)$ with respect to $I_{-\varphi}$. {This is not surprising, since 
 for any closed $H'\in \Omega^3(M,\RR)$, a complex structure   defines a $H'$-twisted generalized complex structure if{f} 
$H'$ is of  type $(2,1)+(1,2)$ \cite[Ex. 2.14]{Gu2}.}
{(To see the second condition, first verify that  
the evaluation of $H$ on three vectors of the form $X-\varphi(X)$  vanishes, for $X\in T_{0,1}$. Hence the component of $H$   of type $(0,3)$ w.r.t. $I_{-\varphi}$ vanishes. 
 Then use that 
 $H$ is real, to conclude that
$H$ is of type $(2,1)+(1,2)$ with respect to $I_{-\varphi}$.)}

The most interesting case is when $\Theta=\beta\in \Gamma(\wedge^2T_{1,0})$. In that case eq. \eqref{mcgc} is equivalent to 
\begin{align*}
[\beta,\beta] &= 0 \in \Gamma(\wedge^3T_{1,0}),\\
\bar{\partial}\beta+\frac{1}{2}(\beta^{\sharp}\wedge\beta^{\sharp})H^{2,1}&=0 \in \Omega^{0,1}(M,\wedge^2T_{1,0}),\\
\beta^{\sharp}H^{1,2}&=0 \in \Omega^{0,2}(M,T_{1,0}),\\
{ H^{0,3}}&=0 \in \Omega^{0,3}(M,\CC).
\end{align*}
{(Here we used $H^{3,0}=\overline{H^{0,3}}=0$.)}
By the first equation $\beta$ is a Poisson bivector field, however it is not holomorphic in general due to the second equation. {$H$ is of  type $(2,1)+(1,2)$ by the fourth equation.}
 
We do not discuss here the equivalences on the set of Maurer-Cartan elements of $\mathfrak{C}$. We just point out that they are induced by elements of ${\Omega^{2}}(M,\RR)[3]\oplus\Gamma(T_{1,0})[2]\oplus \Omega^1(M,\CC)[2]$.

\begin{proof}[Proof of Cor. \ref{cor:cplx}:]
Apply Cor. \ref{cor:gc} to the standard Courant algebroid $TM\oplus T^*M$ and to the generalized complex structure $J_I$ (i.e., to $A=T_{0,1}\oplus T^*_{1,0}$). It delivers an $\li[1]$-algebra structure on $ (C(\cM)[2])[1]\oplus \ga$ governing deformations of the Courant algebroid and of generalized complex structures.
Recall that, given $H\in \Omega^3(M)$, the degree $3$ function $\Delta+H$ on $\cM$ defines a Courant algebroid structure on  $TM\oplus T^*M$
  if{f} $H$ is closed, and in this case it
induces the $(-H)$-twisted Courant bracket \cite[\S4]{Dima}\cite[\S 8]{HDirac}. 

To conclude the proof, we just need to show that ${\Omega(M,\RR)}[3]\oplus \ga$ is an $\li[1]$-subalgebra of $(C(\cM)[2])[1]\oplus \ga$, and that the restricted multibrackets are  those given in the statement.

 We use the following notation for the canonical local coordinates on $\cN=T^*[2]A[1]=T^*[2](T_{0,1}\oplus T^*_{1,0})[1]$ (i.e. for local generators of $C_{\CC}(\cN)=C(T^*[2]T[1]M)\otimes \CC$).
For $j=1,\dots,dim_{\CC}(M)$ 
  we denote by 
$z_j$ complex local coordinates on $M$, by $\bar{z}_j$ the conjugate coordinates,
 by $p_j$ the canonical coordinates on the fibers of $T^*_{1,0}$
 and by $\bar{v}_j$ those on the fibers of $T_{0,1}$
  (so the degrees are $|z_j|=|\bar{z}_j|=0, |p_j|=|\bar{v}_j|=1$). 
By $P_j,\bar{P}_j,v_j,\bar{p}_j$ we denote the coordinates on the fibres of $T^*[2]A[1]\to A[1]$ conjugate to $z_j,\bar{z}_j, p_j,\bar{v}_j$ respectively (their degrees  are $|P_j|=|\bar{P}_j|=2, |v_j|=|\bar{p}_j|=1$). 

The quadruple listed in Lemma \ref{keygcs} reads:
\begin{itemize}
\item   $L=C_{\CC}(T^*[2]A[1])[2]$, whose Lie bracket we denote by  $\op\cdot,\cdot\cp$
\item   $\ga= C_{\CC}(A[1])[2]\cong \Gamma(\wedge A^*)[2]$ 
\item the natural projection $P \colon L \to \ga$ given by evaluation on the base $A[1]$,
\item $\Delta=\sum_i P_iv_i+\sum_i \bar{P}_i\bar{v}_i$.
\end{itemize}
Notice that $\Delta$ is given essentially by the de Rham differential $d={\partial}+\bar{\partial}$.
The multibrackets of the $\li[1]$-algebra $(L[1]\oplus \ga)^P_\Delta$ are given in Thm. \ref{voronov}.
Clearly ${\Omega(M,\RR)}[2]$ is a Lie subalgebra of $L$ (for it is abelian), and further it is closed under $\op\Delta,\cdot\cp$ since the latter acts as the de Rham differential. By Remark \ref{kerPa} it follows that  $\mathfrak{C}={\Omega(M,\RR)}[3]\oplus \ga$ is a $\li[1]$-subalgebra of 
$(C(\cM)[2])[1]\oplus \ga\subset
(L[1]\oplus \ga)^P_\Delta$. 

The restriction of the  multibrackets  to $\mathfrak{C}$ is the one described in the statement of this corollary, as one computes  in coordinates: a) is obtained from eq. \eqref{diffV2}, b) from eq. \eqref{vorderlong}, and c) from eq. \eqref{vorder}.
\end{proof}

\bibliographystyle{habbrv}
\addcontentsline{toc}{section}{References}
\bibliography{DerbibGeo}
 
\end{document}